\documentclass[10pt]{amsart}
\usepackage[utf8]{inputenc}
\usepackage[T1]{fontenc}
\usepackage{amsfonts,amsmath,amssymb,amsthm}

\usepackage{float} 
\usepackage{hyperref}

\usepackage{tikz} 
\usepackage{tikz-3dplot}

\usepackage[normalem]{ulem}

\usepackage[a4paper]{geometry}
\geometry{top=2.5cm, bottom=2.5cm, left=2.0cm, right=2.0cm, marginparsep=1cm}

\linespread{1.1}

\makeatletter
\def\subsection{\@startsection{subsection}{2}%
 \z@{.5\linespacing\@plus.7\linespacing}{.3\linespacing}%
 {\normalfont\bfseries}}
\makeatother

\newcommand{\Cc}{\mathbb{C}} 

\newcommand{\Rr}{\mathbb{R}}
\newcommand{\Nn}{\mathbb{N}}

\newcommand{\Qq}{\mathbb{Q}}

\newcommand{\Kk}{\mathbb{K}}

\renewcommand{\leq}{\leqslant}
\renewcommand{\geq}{\geqslant}
\renewcommand{\le}{\leqslant}

\newcommand{\ii}{\mathrm{i}}

\renewcommand{\epsilon}{\varepsilon}
\newcommand{\defi}[1]{\textbf{#1}}

\DeclareMathOperator{\Crit}{Crit}
\DeclareMathOperator{\Int}{\int}
\DeclareMathOperator{\lc}{\chi}

\DeclareMathOperator{\val}{\nu}
\DeclareMathOperator{\sgn}{sgn}
\newcommand{\order}{<} 
\newcommand{\dd}{\mathrm{d}} 
\newcommand{\hot}{\emph{hot}}

{\theoremstyle{plain}
\newtheorem{theorem}{Theorem}[section] 
\newtheorem*{theoremA}{Theorem A}            
\newtheorem*{theoremB}{Theorem B}            
\newtheorem{lemma}[theorem]{Lemma} 
\newtheorem{proposition}[theorem]{Proposition} 
}

{\theoremstyle{remark}
\newtheorem{definition}[theorem]{Definition} 

\newtheorem*{remark*}{Remark} 
\newtheorem{remark}[theorem]{Remark} 
\newtheorem{example}[theorem]{Example}
}

\newcommand{\sauteligne}{\leavevmode}

\newcommand{%
	\input{figures/.tikz}%
}[1]{%
	\input{figures/#1.tikz}%
}
\newcommand{\myfigure}[2]{
\begin{center}\small
\tikzstyle{every picture}=[scale=1.0*#1]
#2
\end{center}}

\title[Morsifications of real singularities]{Combinatorial study of morsifications \\ of real univariate singularities}

\author[Bodin]{Arnaud Bodin}
\author[García Barroso]{Evelia Rosa García Barroso}
\author[Popescu-Pampu]{Patrick Popescu-Pampu}
\author[Sorea]{Miruna-\c Stefana Sorea}

\email{arnaud.bodin@univ-lille.fr}
\email{ergarcia@ull.es}
\email{patrick.popescu-pampu@univ-lille.fr}
\email{mirunastefana.sorea@ulbsibiu.ro}

\address{(Arnaud Bodin) Universit\'e de Lille, CNRS, Laboratoire Paul Painlev\'e, 59000 Lille, France}
\address{(Evelia Rosa García Barroso) Universidad de La Laguna. IMAULL.
Departamento de Matemáticas, Estadística e I.O., Apartado de Correos 456.
38200, La Laguna, Tenerife, España}
\address{(Patrick Popescu-Pampu) Universit\'e de Lille, CNRS, Laboratoire Paul Painlev\'e, 59000 Lille, France}
\address{(Miruna-\c Stefana Sorea) Lucian Blaga University of Sibiu, 550024 Sibiu,  Romania}

\subjclass[2022] {Primary: 26C05, 05E14, 58K05, 14P25}

\keywords{Apparent contour, Discriminant curve, Morse theory, Newton-Puiseux series, Polar curve, Singularity.}

\thanks{\emph{Acknowledgments}. 
The authors gratefully acknowledge the support of Universidad de La Laguna (Tenerife, Spain), where part of this work was done (Spanish grant PID2019-105896GB-I00 funded by
MCIN/AEI/10.13039/501100011033). This work was also supported by the Labex CEMPI (ANR-11-LABX-0007-01) and ANR SINTROP (ANR-22-CE40-0014). M.-{\c S}. Sorea was supported by the project ``Mathematical Methods and Models for
Biomedical Applications'' financed by National Recovery and Resilience Plan PNRR-III-C9-2022-I8. 
M.-{\c S}. Sorea is grateful to Antonio Lerario for the very supportive working environment during her three-year postdoc at SISSA (Scuola Internazionale Superiore di Studi Avanzati), Trieste, Italy. We thank Erwan Brugall\'e and Christopher-Lloyd Simon for their remarks 
and the anonymous referee for valuable feedback and suggestions.}

\date{\today}

\begin{document}

\begin{abstract}
We study a broad class of morsifications of germs of univariate real analytic functions. We characterize the combinatorial types of the resulting Morse functions via planar contact trees constructed from Newton-Puiseux roots of the polar curves of the morsifications. 
\end{abstract}
 
\maketitle

\section{Introduction}
\label{sect:intro}

\subsection{Morsifications}
 
In this paper, by a \emph{singularity} we mean a germ of real or complex analytic function with an isolated critical point. By a \emph{Morse function} on a compact manifold with boundary we mean a smooth function having only non-degenerate critical points, all of them interior to the manifold, and pairwise distinct critical values. 
A powerful method for analyzing a singularity is to deform it in a suitable way and relate it to the various resulting simpler singularities. This method has been extensively used for complex singularities. For instance, a generic holomorphic deformation of a complex singularity with Milnor number $\mu$ produces a Morse function with exactly $\mu$ critical points (see \cite[page 150]{ebeling}). However, similar  \emph{morsifications} of real singularities have been much less studied, even in the case of one variable. 

\emph{In this paper we examine the combinatorial types of morsifications of univariate real singularities.} This problem is inspired by Arnold's papers \cite{snakes}, \cite{CN}, which studied the combinatorial types of real Morse univariate polynomials, and by Ghys' book \cite{ghys_promenade}, which examined the combinatorial types of real plane curve singularities (see also Ghys' paper \cite{ghysIntersecting} and Ghys and Simon's paper \cite{ghysSimon}). Let us mention also two very recent related articles. In \cite{teissier_clarif}, Teissier describes some open problems about real morsifications and in \cite{vassiliev}, Vassiliev describes the possible real Morsification types in the case of a real simple singularity in any number of variables.

\subsection{Bi-ordered critical sets as measures of the combinatorial types of morsifications}
\label{ssec:biordcrit}

 We encode the combinatorial type of a Morse function defined on a compact interval by a \emph{bi-ordered set}: its critical set endowed with the total order induced by its inclusion in the source interval and with the total order of the corresponding critical values.
Let $F_0(y) \in \Rr\{y\}$ be a convergent power series defining a univariate real singularity. Fix a compact interval $I$ around $y=0$ on which $F_0(y)$ is defined and has a single critical point at the origin. Let $F_x(y) \in \Rr\{x,y\}$ be a morsification of $F_0(y)$. This means that for every small enough $x_0>0$, the functions $F_{x_0} : I \to \Rr$ are Morse and have the same combinatorial type. Moreover, this combinatorial type is independent of the choice of interval $I$, it is therefore canonically attached to the morsification $F_x(y) \in \Rr\{x,y\}$. 

\emph{Our central problem is to compute this combinatorial type starting from the series $F_x(y)$.}

We solve this problem under a suitable hypothesis, the \emph{injectivity condition}. Our answer is governed by the \emph{contact tree} $T_{\Rr}(f)$ of the \emph{real} Newton-Puiseux roots $\xi_i$ of $f(x,y) := \partial_y F_x(y)$. It is a rooted planar tree whose leaves correspond bijectively to the series $\xi_i$. As an abstract tree, it is determined by the \emph{valuations} of the pairwise differences of those series, that is, by the initial exponents of those differences. In turn, its planar structure is given by the total order $\order_{\Rr}$ on its set of leaves such that $\xi_i \order_{\Rr} \xi_j$ if and only if $\xi_i(x_0) < \xi_j(x_0)$ for $x_0 > 0$ small enough.

Under the injectivity condition, we construct canonically from $F_x(y)$ a second planar structure on the abstract rooted tree $T_{\Rr}(f)$. This second planar structure determines a new total order on the set of leaves of $T_{\Rr}(f)$. In Theorem A, we prove that: 

\begin{theoremA} 
\label{th:A}
Assume that $f$ satisfies the injectivity condition. Then, for $x_0>0$ small enough, the bi-ordered critical sets of the Morse functions $F_{x_0} : I \to \Rr$ are isomorphic to the set of leaves of the contact tree $T_{\Rr}(f)$, endowed with the total orders determined by the two planar structures above.
\end{theoremA}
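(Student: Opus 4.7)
My plan is to parametrise the critical set of each $F_{x_0}$ by the real Newton-Puiseux roots $\xi_i$ of $f = \partial_y F_x$, and then to read off the two total orders on this set from the asymptotic behaviour of the $\xi_i(x_0)$ and of the critical values $F_{x_0}(\xi_i(x_0))$, respectively, as $x_0 \to 0^+$. First I would verify that for sufficiently small $x_0>0$, the real critical points of $F_{x_0}$ on $I$ are exactly the values $\xi_i(x_0)$: these lie in the interior of $I$ because each $\xi_i$ converges to $0$ as $x \to 0^+$, and every real zero of $f(x_0, \cdot)$ arises in this way (this uses that all Puiseux branches of $f$ passing through the origin are accounted for by the $\xi_i$, together with a standard compactness/continuity argument to exclude stray critical points away from the origin).

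Once this identification is made, the first claim of the theorem is essentially tautological: by definition, $\xi_i \order_{\Rr} \xi_j$ means $\xi_i(x_0) < \xi_j(x_0)$ for $x_0>0$ small enough, which is exactly the order of the corresponding critical points on the source interval $I$. So the induced source order on the critical set coincides with the one given by the first planar structure of $T_{\Rr}(f)$.

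The substantial step is the second claim, which requires comparing the critical values $v_i(x) := F_x(\xi_i(x))$ and $v_j(x) := F_x(\xi_j(x))$. My idea is to expand $F_x$ around the critical point $\xi_i(x)$: since $\partial_y F_x(\xi_i(x)) = 0$, Taylor's formula gives
\[
v_j(x) - v_i(x) = \tfrac{1}{2}\, \partial_y f(x,\xi_i(x)) \cdot (\xi_j(x) - \xi_i(x))^2 + \text{higher order terms in } (\xi_j - \xi_i).
\]
The valuation of $\xi_j - \xi_i$ is the height of the meeting node of the leaves corresponding to $\xi_i, \xi_j$ in the abstract contact tree, while $\partial_y f(x,\xi_i(x))$ is a product of factors $(\xi_i - \xi_k)$ over the other roots of $f$, whose valuations are again read off from $T_{\Rr}(f)$. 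Thus the valuation and the sign of the leading coefficient of $v_j(x) - v_i(x)$ are expressible purely in terms of data attached to the tree, and the order of the critical values for small $x_0>0$ is determined by these leading terms.

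The main obstacle is showing that this tree-theoretic recipe for the leading term coincides with the second planar structure constructed from $F_x$ earlier in the paper, and that this leading term is actually nonzero (so that the critical values are pairwise distinct and $F_{x_0}$ is genuinely Morse). This is exactly where I expect the injectivity condition to enter: it should guarantee that no two real Puiseux roots of $f$ produce critical values whose asymptotic contributions cancel, so that the Puiseux valuations above determine a well-defined strict order. Assuming this, the ordering of critical values matches, node by node along the tree, the second planar structure, and Theorem~A follows by combining the two identifications of orders with the identification of the critical set with the leaves of $T_{\Rr}(f)$.
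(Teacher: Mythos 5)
Your overall strategy — identify $\Crit(F_{x_0})$ with the real Newton--Puiseux roots $\xi_i$, observe that the source order is tautologically the real order, and reduce the target order to a leading-coefficient computation for $F_x(\xi_j)-F_x(\xi_i)$ — is the same outline as the paper's. The gap is in the leading-coefficient computation: your Taylor expansion of $F_x$ at $\xi_i$ does \emph{not} isolate the correct leading term of $v_j-v_i$, because the higher Taylor terms in $(\xi_j-\xi_i)$ are not of strictly higher $x$-valuation. The $n$-th Taylor term is $\frac{1}{(n+1)!}\partial_y^n f(x,\xi_i)\,(\xi_j-\xi_i)^{n+1}$, and while $(\xi_j-\xi_i)^{n+1}$ gains valuation with $n$, the factor $\partial_y^n f(x,\xi_i)$ can lose it just as fast, so several Taylor orders contribute at the minimal valuation. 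For a concrete failure, take $f=(y)(y-x^2)(y-x)$ with $\xi_1=0\order_\Rr\xi_2=x^2\order_\Rr\xi_3=x$ and compare $\xi_1,\xi_3$: the quadratic Taylor term has valuation $5$, but the cubic and quartic terms each have valuation $4$ and their sum is what actually dominates $v_3-v_1$. So the valuation you read off from $\partial_y f(x,\xi_i)\cdot(\xi_j-\xi_i)^2$ is in general strictly larger than the true valuation, and its sign carries no information about the sign of $v_j-v_i$.

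The paper avoids this by working with the decomposition $F_x(\xi_j)-F_x(\xi_i)=S_i+\cdots+S_{j-1}$ into \emph{area series} $S_r=\int_{\xi_r}^{\xi_{r+1}}f(x,t)\,\dd t$. The valuation of each $S_r$ is computed exactly (Proposition \ref{prop:sigmai}) via a change of variables in the integral — crucially, using \emph{all} Newton--Puiseux roots of $f$, including the non-real ones (so it is $T_\Cc(f)$, not only $T_\Rr(f)$, that enters), and including the contribution of the unit $u$. Then the strict monotonicity of the integrated exponent function (Lemma \ref{lem:incareaexp}) shows that only the $S_r$ whose index sits at the wedge node $P=\xi_i\wedge\xi_j$ can contribute at minimal valuation, and the injectivity condition is precisely what guarantees their initial coefficients do not cancel (Lemma \ref{lem:diffcrit}). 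If you want to salvage a Taylor-type argument, you would effectively have to resum all orders of the expansion at once, which reproduces the integral; otherwise the single second-order term is the wrong object to control. You would also need to bring in the non-real roots explicitly, which your sketch currently ignores.
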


The previous result led us to ask whether $T_{\Rr}(f)$, endowed with its second planar structure, may also be interpreted as a contact tree. In Theorem B we prove that this is indeed the case: 

\begin{theoremB} 
\label{th:B}
Assume that $f$ satisfies the injectivity condition. Then, with its second planar structure, $T_{\Rr}(f)$ is isomorphic to the contact tree of the real Newton-Puiseux roots of the \emph{discriminant curve} of the morphism $(x,y) \mapsto (x, F_x(y))$.
\end{theoremB}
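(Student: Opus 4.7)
\bigskip
\noindent \textbf{Proof plan.}

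The strategy is to split Theorem~B into two matches — one of the planar structures, one of the underlying abstract rooted trees — the first of which follows almost immediately from Theorem~A. Indeed, Theorem~A identifies the second planar order on the leaves of $T_\Rr(f)$ with the ordering of the critical values $\eta_i(x_0) := F_{x_0}(\xi_i(x_0))$ for small $x_0 > 0$; this is exactly the natural planar order on the contact tree of the real Newton-Puiseux roots $\eta_i$ of the discriminant $\Delta(x,z)$ of $(x,y) \mapsto (x, F_x(y))$. So the two planar orders agree under the bijection $\xi_i \leftrightarrow \eta_i$.

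For the abstract trees I plan to argue at the level of the complex contact trees $T_\Cc(f)$ and $T_\Cc(\Delta)$, restricting to the real subtrees at the end. Set $v_{ij} := \val(\xi_i - \xi_j)$ and factor $f = u\,\prod_k (y - \xi_k)$ over all complex Puiseux roots, with $u$ a unit. Starting from $\eta_i - \eta_j = \int_{\xi_j}^{\xi_i} f(x,y)\,dy$ and substituting $y = \xi_j + s(\xi_i - \xi_j)$ with $s \in [0,1]$, I would derive the key valuation identity
\[
\val(\eta_i - \eta_j) \;=\; 3\, v_{ij} + \sum_{k \neq i, j} \min(v_{ik}, v_{ij})
\]
by inspecting each factor: $(\xi_j - \xi_k) + s(\xi_i - \xi_j)$ has valuation $\min(v_{jk}, v_{ij}) = \min(v_{ik}, v_{ij})$ (the last equality is the ultrametric identity on $\{\xi_i, \xi_j, \xi_k\}$). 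The principal technical hurdle is to verify that the leading coefficient — an integral over $s \in [0,1]$ of an explicit polynomial in $s$ whose coefficients involve the initial terms of the $\xi_j - \xi_k$ and of $u$ — does not vanish; the injectivity condition should preclude the corresponding degenerate cancellations.

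Granting the identity, write $\val(\eta_i - \eta_j) = g_i(v_{ij})$ with $g_i(v) := 2v + \sum_{k \neq i}\min(v_{ik}, v)$, which is strictly increasing in $v$ and depends only on $i$ and the tree $T_\Cc(f)$. For any cluster $S$ of $T_\Cc(f)$ at level $v_S$, any $i, i' \in S$ and any $k \notin S$ satisfy $v_{ii'} \geq v_S > v_{ik}$, whence $v_{ik} = v_{i'k}$ by the ultrametric identity; consequently $g_i(v_S)$ does not depend on $i \in S$. Thus $S$ is also a cluster of the contact tree of the complex $\eta_i$'s at level $g_i(v_S)$, and the monotonicity of each $g_i$ yields the converse. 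The resulting isomorphism $T_\Cc(f) \cong T_\Cc(\Delta)$ sends real leaves to real leaves (since $F_x$ is real, $\xi_i \in \Rr$ implies $\eta_i \in \Rr$), hence restricts to the desired isomorphism $T_\Rr(f) \cong T_\Rr(\Delta)$, which combined with the planar match concludes the proof.
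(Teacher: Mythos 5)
The decomposition you use (match the planar orders via Theorem~A, then match the underlying abstract rooted trees via a valuation identity for $\val(\delta_j-\delta_i)$) parallels the paper's proof, and your formula $3v_{ij}+\sum_{k\neq i,j}\min(v_{ik},v_{ij})$ is indeed a correct rewriting of the integrated exponent $\sigma(\xi_i\wedge\xi_j)$ from Definition~\ref{def:intexp}. However, the core of the argument — ``the principal technical hurdle\ldots the injectivity condition should preclude the corresponding degenerate cancellations'' — is exactly where the difficulty lives, and it is not resolved. After the substitution $y=\xi_j+s(\xi_i-\xi_j)$, the leading coefficient of the integrand (a polynomial in $s$) is a product of factors whose initial terms in $x$ may change sign on $(0,1)$ as soon as some $\xi_k$ sits strictly between $\xi_i$ and $\xi_j$ with $v_{ik}\geq v_{ij}$; the integral of this polynomial over $[0,1]$ can vanish, and nothing in the injectivity condition, as stated, controls that particular integral directly. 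The paper sidesteps this: it proves the valuation identity only for \emph{consecutive} real roots $\xi_r,\xi_{r+1}$ (Proposition~\ref{prop:sigmai}), where all factors are of constant sign on $(0,1)$ so the integral provably does not vanish; for non-consecutive pairs, it decomposes $\delta_j-\delta_i=S_i+\cdots+S_{j-1}$ and invokes the strict monotonicity of $\sigma$ (Lemma~\ref{lem:incareaexp}) together with the injectivity condition to identify the surviving terms (Lemma~\ref{lem:diffcrit}). Without that decomposition, your ``derive the key valuation identity'' step is not a derivation.

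The detour through the complex contact trees compounds the problem. You apply the valuation identity to \emph{all} complex pairs $(\xi_i,\xi_j)$, but the injectivity condition is a condition on real roots and real sign sums; it gives you nothing about a cancellation in a complex integral, and the paper nowhere claims, nor needs, $T_\Cc(f)\cong T_\Cc(\Delta)$. Since your ultrametric cluster argument itself (which is fine as algebra, granted the identity) requires the identity for \emph{every} pair — including complex ones — the gap propagates into the very step you use to match the trees. The repair is simply to stay inside $T_\Rr(f)$: prove the identity for consecutive real pairs by the sign argument, and extend to arbitrary real pairs via the $S_r$-decomposition and injectivity; that recovers exactly the value of $E(\delta_i\wedge\delta_j)$ needed to glue the segment homeomorphisms $[O,\xi_i]\to[O,\delta_i]$, which is how the paper actually builds the isomorphism. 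Your planar-structure match via Theorem~A is correct and is in substance the same as the paper's use of Lemma~\ref{lem:diffcrit}.
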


This discriminant curve is the \emph{critical image} of this morphism, also called \emph{apparent contour in the target}. The \emph{apparent contour in the source} is the curve $f(x,y) =0$, that is, the \emph{polar curve} of $F_x(y)$ relative to $x$. 

Let us assume that the injectivity condition is satisfied. Then, as a consequence of Theorem A, the structure of the real contact tree $T_{\Rr}(f)$ strongly constrains the combinatorial type of the morsifications $F_x(y)$ (see Remark \ref{rem:strongconstr}) and,  as a consequence of Theorem B, the real contact trees of the apparent contours in the source and in the target of the morphism $(x,y) \mapsto (x, F_x(y))$ are isomorphic as abstract rooted trees (see Remark \ref{rem:absisom}).

\subsection{The meaning of the injectivity condition}
\label{ssec:meaninginj}

 Assume that the real Newton-Puiseux roots of $f$ are numbered such that $\xi_1 \order_{\Rr} \cdots \order_{\Rr} \xi_n$. The bi-ordered critical set of a Morse function $F_{x_0} : I \to \Rr$, for small enough $x_0>0$, is determined by the signs of all the differences $F_{x_0}(\xi_j(x_0)) - F_{x_0}(\xi_i(x_0))$ of its critical values. If $i < j$, we may write:
 $$ F_x(\xi_j) - F_x(\xi_i) = S_i + \cdots + S_{j-1}, $$
where $S_r := F_x(\xi_{r+1}) - F_x(\xi_r)$. For small enough $x_0 > 0$, the sign of $F_{x_0}(\xi_j(x_0)) - F_{x_0}(\xi_i(x_0))$ is thus equal to the sign of the initial coefficient of the sum  
$S_i + \cdots + S_{j-1}$ of real Newton-Puiseux series. 
We meet the precise situation in which Newton introduced the method of turning ruler, which led to the notion of Newton polygon (see \cite[pages 51--53]{ghys_promenade}): denoting by $\val(S_l)$ the valuation of $S_l$, we know that the initial coefficient of the 
sum $S_i + \cdots + S_{j-1}$ is the sum of the initial coefficients $s_r$ of the series $(S_r)_{i \leq r < j}$ achieving the minimum 
 $$ \min \{ \val(S_r) \mid i \leq r < j\},$$
\emph{provided that this last sum of initial coefficients $s_r$ is non-zero}. The \emph{injectivity condition} of Definition \ref{def:injcond} is equivalent to the fact that this non-vanishing condition is satisfied for every pair $(i,j)$ with $i < j$.

\subsection{Structure of the paper}
 
As the function which controls the combinatorial types of the Morse functions $y \mapsto F_{x_0}(y)$ is $f(x,y):= \partial_y F_x(y)$ rather than $F_x(y)$, we prefer to start from a real analytic series $f(x,y)$ and integrate it relative to $y$ in order to get the series $F_x(y)$. 
In Section \ref{sec:real-branches} we recall the factorization of $f(x,y)$ via its Newton-Puiseux roots, we distinguish between real and non-real roots and we define the notions of \emph{right-reduced series} and of \emph{primitive} of $f(x,y)$. 
In Section \ref{sec:right-morse} we explain the needed notions about \emph{univariate Morse functions} and their \emph{bi-ordered critical sets}, as well as about \emph{morsifications} of univariate singularities. In Section \ref{sec:contact-trees} we explain basic facts about \emph{rooted} and \emph{planar} trees and we introduce the types of rooted trees used in the paper:  the \emph{real contact tree} $T_{\Rr}(f)$ mentioned above, and the \emph{complex contact tree} $T_{\Cc}(f)$, which is an abstract rooted tree containing $T_{\Rr}(f)$.  Section \ref{sec:injectivity} contains our main technical results. In it, we introduce the \emph{area series} $S_l$ mentioned in Subsection \ref{ssec:meaninginj}, we compute their valuations in terms of the embedding $T_{\Rr}(f) \hookrightarrow T_{\Cc}(f)$ (see Proposition \ref{prop:sigmai}) and we deduce the valuations of the sums $S_i + \cdots + S_{j-1}$ under the non-vanishing hypothesis mentioned in Subsection \ref{ssec:meaninginj} (see Lemma \ref{lem:diffcrit}).
In Section \ref{sec:theorem} we define the \emph{injectivity condition} (\ref{Inj}), we give examples in which it is not satisfied (see Examples \ref{ex:symtree} and \ref{ex:notnec}), we prove our first main result, Theorem A, and we explain that Lemma \ref{lem:diffcrit} allows to get a weaker statement even if the injectivity condition is not satisfied (see Remark \ref{rem:notinj}). In Section \ref{sec:contour} we define real polar and discriminant curves and we prove our second main result, Theorem B. We conclude the paper by an example with parameters, explained in Section \ref{sec:example}.

\subsection{Related works}

In this paper, we generalize results of the PhD thesis \cite{sorea2018shapes} of the last author, 
published in \cite{sorea_portugaliae}, \cite{sorea_SymbComp}, \cite{sorea_fourier}.
In those works, the polar curve $f(x,y)=0$ and the series $F_x(y)$ had to respect some hypotheses: 

-- all the branches of the polar curve were real, distinct, smooth and transverse to the vertical axis $x=0$; 

-- the real contact tree $T_{\Rr}(f)$ was a rooted binary tree (then the  injectivity condition is  automatically satisfied); 

-- $F_x(y)$ had a strict local minimum at $(0,0)$.

The aim was to describe the asymptotic shape of the level curves $F_x(y)=\varepsilon$ when $\varepsilon>0$ converged to $0$. This description was done in terms of a \emph{Poincaré-Reeb tree} measuring the non-convexity of the interior of the topological disk bounded by the level curve $F_x(y)=\varepsilon$, relative to the direction $x$ (see also \cite{BPS} for a general study of level curves of real bivariate polynomials). 
Here we replace all the former hypotheses by two much less restrictive conditions, namely that:

-- the real Newton-Puiseux roots $\xi_i$ of $f(x,y)$ are pairwise distinct; 

-- the \emph{injectivity condition} is satisfied.

\subsection{An explanatory picture} 
\label{ssec:fundpict}

\begin{figure}[H]
\myfigure{0.85}{
	\tdplotsetmaincoords{70}{12}
\begin{tikzpicture}[tdplot_main_coords,scale=3]

\begin{scope}[yshift=-1.75cm]

\draw[fill=gray!10,opacity=0.5] (-1,2,0) -- (-1,-2,0) -- (2,-2,0) -- (2,2,0) -- cycle;
\draw[thick,->,>=latex] (0,0,0) -- (1.85,0,0) node[below]{$x$};
\draw[thick,->,>=latex] (0,-1.2,0) -- (0,1.5,0) node[above]{$y$};

\draw[ultra thick, color=blue,domain=-1.2:1.2,samples=50,smooth] plot ({(\x)^2},{\x}) node[above,scale=0.9,black] {$f(x,y)=0$} ;
\node[blue] at (1.55,-1.3) {$\xi_1$};
\node[blue] at (1.55,1.15) {$\xi_2$};

\draw[thick,green!70!black] (1,-1.3,0) -- (1,1.3,0) node[midway,below right]{$x_0$};

\end{scope}

\begin{scope}

\draw[-stealth,red!50,line width=3pt,opacity=1] (-0.5,0,-0.5) -- ++(0,0,-1);
\draw[fill=gray!10,opacity=0.5] (-1,2,0) -- (-1,-2,0) -- (2,-2,0) -- (2,2,0) -- cycle;

\draw[thick,->,>=latex] (0,0,0) -- (1.85,0,0) node[below]{$x$};
\draw[thick,->,>=latex] (0,-1.2,0) -- (0,1.5,0) node[above]{$y$};
\draw[thick,->,>=latex] (0,0,0) -- (0,0,1.05) node[left]{$z$};

\begin{scope}[canvas is yz plane at x=1,scale=0.5]
\draw[fill=red!5,opacity=0.5] (-4,-2.5) rectangle ++(8,5);
\def\c{1.0}
\def\x{-1.74}
\coordinate (P1) at (2*\x,{\x^3-3*\c*\x)});

\def\x{-1}
\coordinate (P2) at (2*\x,{\x^3-3*\c*\x)});

\def\x{1}
\coordinate (P3) at (2*\x,{\x^3-3*\c*\x)});

\def\x{1.83}
\coordinate (P4) at (2*\x,{\x^3-3*\c*\x)});

\end{scope}

\begin{scope}[canvas is yz plane at x=1,scale=0.5]
\def\c{1.0}

\draw[ultra thick, color=green!70!black,domain=-1.74:0,samples=50,smooth] plot (2*\x,{\x^3-3*\c*\x)});

\draw[ultra thick, color=green!70!black!40,domain=-0:1.75,samples=50,smooth] plot (2*\x,{\x^3-3*\c*\x)});
 \draw[ultra thick, color=green!70!black,domain=1.75:1.83,samples=50,smooth] plot (2*\x,{\x^3-3*\c*\x)}) node[right,green!70!black]
 {$F_{x_0}(y)$};

\node[scale=3] at (0,0) {.};

\end{scope}

\draw[thick] (1,0,0) -- ++ (0.5,0,0);

\end{scope}

\begin{scope}[xshift=0.4cm, yshift=1.95cm]

\draw[fill=gray!10,opacity=0.5] (-1,0,1) -- (-1,0,-1) -- (2,0,-1) -- (2,0,1) -- cycle;
\draw[thick,->,>=latex] (0,0,0) -- (1.85,0,0) node[below]{$x$};
\draw[thick,->,>=latex] (0,0,0) -- (0,0,0.90) node[left]{$z$};

\begin{scope}[canvas is xz plane at y=0,scale=0.5]
\draw[ultra thick, color=red!80,domain=0:1.4,samples=50,smooth] plot ({\x^2},{0.5*\x^3}) node[right] {$\delta_1$};
\draw[ultra thick, color=red!80,domain=0:1.4,samples=50,smooth] plot ({\x^2},{-0.5*\x^3}) node[right] {$\delta_2$};
\end{scope}

\end{scope}

\draw[-stealth,red!50,line width=3pt,opacity=1] (-0.85,1.5,0.05) -- ++(0,2.1,0) node[midway,left]{$\pi$};

\begin{scope}[canvas is yz plane at x=0,scale=0.5]
\draw[ultra thick, color=violet!90!black!40,domain=-0.91:0,samples=50,smooth] plot (2*\x,{\x^3)});
\def\x{-0.9}
\coordinate (P5) at (2*\x,{\x^3)});
\draw[ultra thick, color=violet!90!black,domain=0:1.05,samples=50,smooth] plot (2*\x,{\x^3)});
\def\x{1.05}
\coordinate (P6) at (2*\x,{\x^3)});
\draw[fill=red!5,opacity=0.5] (-4,-2.5) rectangle ++(8,5);
\end{scope}
\draw (P5) to[bend left=4] (P1);
\draw[ultra thick,orange] (0,0,0) to[bend right=10] (P2);
\draw[ultra thick,orange!60] (0,0,0) to[bend left=15] (P3);
\draw (P6) to[bend right=3] (P4);
\end{tikzpicture}%

}
\caption{The graph of a morsification $(x,y) \mapsto F_x(y)$} of $y \mapsto y^3$, its source and target projections and sections of the graph by the planes defined by $x = 0$ and $x = x_0$.
\label{fig:fundamental}
\end{figure}
 
Figure \ref{fig:fundamental} introduces the main geometric objects studied in this paper.
It corresponds to Whitney's classical \emph{cusp singularity} from \cite{whitney} of a map between real planes.
This example will also illustrate our main Theorems A and B (see Examples \ref{ex:parabola1} and \ref{ex:parabola2}).
We start from the real plane curve germ $f(x,y)=0$ at $(0,0)$ represented in the real plane $\Rr^2_{x,y}$ at the bottom, 
where $f(x,y) = 3(y^2-x) = 3 (y+x^{\frac{1}{2}})(y-x^{\frac{1}{2}})$. It has two Newton-Puiseux roots $\xi_1 = -x^{\frac{1}{2}}$ and $\xi_2 = x^{\frac{1}{2}}$. Both are real. 
We define $F_x(y)$ as a primitive of $f$ w.r.t.{} the variable $y$. Here we choose $F_x(y) := y^3-3xy$.
The graph of the function $(x,y) \mapsto F_x(y)$, for positive $x$, is the surface depicted in the central part of the figure. 
By intersecting this surface with a vertical plane defined by $x=x_0$, we get the graph of $y \mapsto F_{x_0}(y)$. In our example, it is a Morse function with one local maximum and one local minimum, for every $x_0 >0$. It is possible to follow these local extrema when $x_0$ tends to $0$: they trace the two orange curves on the surface. These two curves project to the real plane $\Rr^2_{x,y}$ exactly onto the graphs of the roots $\xi_1$ and $\xi_2$.

On the other hand, these orange curves form the apparent contour in the source of the projection $\pi$ of the surface above onto the vertical plane $\Rr^2_{x,z}$. The apparent contour in the target plane $\Rr^2_{x,z}$ is the \emph{discriminant curve} of $\pi$, which in this example consists of the graphs of two real Newton-Puiseux series $\delta_1$ and $\delta_2$ (at the top of Figure \ref{fig:fundamental}). The graphs of $\xi_1$ and $\xi_2$ in the real plane $\Rr^2_{x,y}$ form the \emph{polar curve} of $F_x(y)$ with respect to $x$; it is defined by the equation $f(x,y) =0$. By the projection $\pi$, the series $\xi_1$ corresponds to $\delta_1$, and $\xi_2$ to $\delta_2$. In the plane $\Rr^2_{x,y}$, $\xi_1$ appears \emph{before} $\xi_2$ (we will define the real total order on the ring of real Newton-Puiseux series in Section \ref{sec:real-branches}), while in the plane $\Rr^2_{x,z}$, $\delta_1$ appears \emph{after} $\delta_2$. This permutation 
$\left(\begin{smallmatrix}
	1 & 2  \\
	2 & 1 \\
\end{smallmatrix}\right)$ 
encodes the combinatorial type of the Morse function $y \mapsto F_{x_0}(y)$.
Theorem A explains that whenever the \emph{injectivity condition} is satisfied, the corresponding permutation may be read from the embedding $T_{\Rr}(f) \hookrightarrow T_{\Cc}(f)$. 

The results of our paper allow therefore to make pictures analogous to that of Figure \ref{fig:fundamental}, representing correctly the combinatorial types of the Morse functions $y \mapsto F_x(y)$ whenever $f$ satisfies the injectivity condition.

\section{Real Newton-Puiseux series, right semi-branches and primitives}
\label{sec:real-branches}

In this section we explain our notations about \emph{Newton-Puiseux series}, we define \emph{right semi-branches} as the germs of graphs of real Newton-Puiseux series and we introduce the notion of \emph{primitive} of a bivariate series.

\subsection{Newton-Puiseux series}
\label{ssec:NPs}

For $\Kk=\Cc$ or $\Rr$, let $\Kk\{x\}$ and $\Kk\{x,y\}$ denote the ring of convergent power series in one and two variables respectively, with coefficients in the field $\Kk$. Consider also the ring 
   \[ \Kk\{x^{\frac{1}{\Nn}}\} := \big\{g(x^\frac{1}{n}) \mid g\in \Kk \{t\}, n\in\Nn^*\big\} \]
of \defi{Newton-Puiseux series in the variable $x$}, with coefficients in $\Kk$. Then $\Rr\{x^{\frac{1}{\Nn}}\}\subset\Cc\{x^{\frac{1}{\Nn}}\}$. 

Let $\gamma\in\Cc\{x^{\frac{1}{\Nn}}\}\setminus\{0\}$. We may write uniquely:
   \[ \gamma = s x^\sigma + \hot, \] 
such that $s\in\Cc^*$, $\sigma\in \Qq \ \cap \ [0,+\infty)$ and the remainder $\hot$ (which stands for \emph{higher order terms}) gathers the terms of $\gamma$ whose exponents are greater than $\sigma$. The number $s\in\Cc^*$ is the \defi{initial coefficient} of $\gamma$, denoted by $\lc(\gamma)$, and $\sigma\in\mathbb{Q} \ \cap \ [0,+\infty)$ is the \defi{initial exponent} of $\gamma$, denoted by $\val(\gamma)$. By convention, $\val(0)=\infty$. The function 
  \[ \val: \Cc\{x^{\frac{1}{\Nn}}\} \to \mathbb{Q} \cup \{\infty\} \] 
is a \emph{ring valuation}, which will play a crucial role in the sequel.

The ring $\Rr\{x^{\frac{1}{\Nn}}\}$ of real Newton-Puiseux series is naturally totally ordered:
\begin{definition}
 \label{def:realtot}
 The \defi{real total order} $\order_{\Rr}$ on the ring $\Rr\{x^{\frac{1}{\Nn}}\}$ is defined as follows: for any two distinct Newton-Puiseux series $\xi_1, \xi_2\in \Rr\{x^{\frac{1}{\Nn}}\}$, $\xi_1 \order_{\Rr} \xi_2$ if and only if $\lc (\xi_2-\xi_1)>0$. 
\end{definition}

Note that $\xi_1 \order_{\Rr} \xi_2$ if and only if $\xi_1(x_0) < \xi_2(x_0)$ for $x_0 > 0$ small enough.

\subsection{Right semi-branches}
\label{ssec:rightsb}

Consider $\xi\in \Rr\{x^{\frac{1}{\Nn}}\}$ with $\xi(0) = 0$. In the sequel it will be often needed to turn the formal series $\xi$ into a real-valued function. This will be performed by choosing a real number $\epsilon\in (0, \infty)$ such that the series with real terms $\xi(x_0)$ converges for every $x_0 \in [0, \epsilon]$. For simplicity, we still denote by $\xi: [0, \epsilon] \to \Rr$ the resulting function. We will say that it is \defi{the sum of the series $\xi$}.
The sum of the series $\xi$ depends on the chosen interval of convergence $[0, \epsilon]$, but its germ at the origin is well-defined. 
Therefore, the germ at $(0,0)\in\Rr^2$ of the graph 
$$\Gamma_{\xi} := \big\{(x_0,\xi(x_0))\mid x_0 \in [0,\epsilon]\big\}$$ 
of the function $\xi$ is also well-defined. We call it the \defi{right semi-branch of the series $\xi \in \Rr\{x^{\frac{1}{\Nn}}\}$}.

If $f\in \Rr\{x,y\}$ is such that $f(0, 0) =0$ but $f(0,y) \not\equiv 0$, then by the Weierstrass preparation theorem (see \cite[page 107]{fischer}), together with the Newton-Puiseux theorem (see \cite[Theorem 1.2.20]{handbookCurves}, \cite[Section 8.3]{brieskorn}), we can write in a unique way:
\begin{equation}
\label{eq:WeierstrassPrepThm}
 f(x,y)=u(x,y)\prod_{i=1}^k (y-\gamma_i),
\end{equation}
such that $u \in \Cc\{x,y\}$ is a unit (i.e.{} $u(0,0) \neq 0$) and $\gamma_i \in \Cc\{x^{\frac{1}{\Nn}}\}$ for all $i \in \{1, \dots, k\}$. Since $f\in \Rr\{x,y\}$, we have that $u\in \Rr\{x,y\}$.
The Newton-Puiseux series $\gamma_i$ are called the \defi{Newton-Puiseux roots} of $f$. 
We will make below (see Formula (\ref{eq:xiAndEta})) a distinction between roots having only real coefficients (denoted by $\xi_i$) and the others (denoted by $\eta_l$).
We denote by $\mathcal{R}_{\Kk}(f)$ the \emph{multi-set} of roots $\gamma\in \Kk\{x^{\frac{1}{\Nn}}\}$ of $f$ (that is, each root is counted with its multiplicity). 
The \defi{set of right semi-branches of} $f$ is by definition the set of right semi-branches of the elements of $\mathcal{R}_{\Rr}(f)$.

\subsection{Primitives}
\label{ssec:Primit}

Consider $f \in \Rr\{x,y\}$ with $f(0,0) = 0$ and $f(0, y) \not\equiv 0$. A \defi{primitive} of $f$ is a series $F_x(y)\in \Rr\{x,y\}$ such that: 
\begin{equation}
\label{eq:PrimitiveOff}
    \partial_y F_x = f.
\end{equation}
Primitives of $f$ always exist. They are of the form $g(x) + h(x,y)$,
where $g \in \Rr\{x\}$ is arbitrary and $h \in \Rr\{x,y\}$ is obtained by termwise integration of the series $f$, that is, by replacing each non-zero term $c_{p,q} \cdot x^p \cdot y^q$ of it by $c_{p,q}\cdot x^p \cdot \dfrac{y^{q+1}}{q+1}$.

\section{Morsifications}
\label{sec:right-morse}

In this section we give basic vocabulary about univariate \emph{Morse functions} and we introduce their \emph{bi-ordered critical graphs}. Then we define \emph{morsifications} of univariate singularities and their \emph{combinatorial types}.

\subsection{Morse functions}
 
Let us first introduce standard definitions from Morse theory, particularized to our context of univariate functions:

\begin{definition} 
\label{def:Morsevocab}
Let $I \subset \Rr$ be a compact interval and let $\varphi : I \to \Rr$ be a smooth function. 
We say that $c \in I$ is a \defi{critical point} of $\varphi$ if $\varphi'(c)=0$; it is called  \defi{non-degenerate} if $\varphi''(c) \neq 0$. 
We say that $\varphi$ is a \defi{Morse function} if:

-- all its critical points are non-degenerate;

-- they lie in the interior of $I$; 

-- its critical values are pairwise distinct. 

The \defi{critical graph} of $\varphi$ is the graph of the restriction of $\varphi$ to its set of critical points: 
\[\Crit(\varphi) := \big\{(c,\varphi(c))\mid c \text{ is a critical point of } \varphi\big\}.\] 
\end{definition}

Non-degenerate critical points being isolated, a Morse function on a compact interval has only a finite number of critical points. 
In the literature, what we call Morse functions are sometimes called \emph{excellent} Morse functions, the attribute referring to the third condition above, which is equivalent to the condition that no two critical points lie on the same level set. As we do not consider non-excellent Morse functions, we prefer to use the simplified terminology of Definition \ref{def:Morsevocab}.

\subsection{The canonical bi-order on the critical graph of a Morse function}\label{sec:snakes}

In this paper, by an \defi{order} we mean either a strict or non-strict partial or total order on a given set, depending on the context. We will denote by $\prec$ the strict order associated to an order $\preceq$.
 
A finite set $\mathcal{S}$ is \defi{bi-ordered} if it is endowed with a pair of total orders. The critical graph $\Crit(\varphi)$ (see Definition \ref{def:Morsevocab}) of any Morse function $\varphi:I \to \Rr$ defined on a compact interval $I$ is canonically bi-ordered:

\begin{definition} 
\label{def:canbiordmorse}
Let $I \subset \Rr$ be a compact interval and $\varphi:I \to \Rr$ be a Morse function. The \defi{source order} $\order_s$ and \defi{target order} $\order_t$ are the total orders on the critical graph $\Crit(\varphi)$ defined as follows for any two distinct points $p=(y_1,z_1), q=(y_2,z_2) \in \Crit(\varphi)$:
\begin{itemize}
 \item $p \order_s q$ if and only if $y_1 < y_2$, 
 \item $p \order_t q$ if and only if $z_1 < z_2$. 
\end{itemize} 
The \defi{canonical bi-order} on $\Crit(\varphi)$ is the pair $(\order_s, \order_t)$.
\end{definition}

\begin{remark}
\sauteligne
\begin{enumerate}
  \item The bi-ordered set $(\Crit(\varphi),\order_s,\order_t)$ may be thought as a measure of the combinatorial type of the Morse function $\varphi$. 
 Indeed, let $\varphi_1 : I_1 \to \Rr$ and $\varphi_2 : I_2 \to \Rr$ be two Morse functions on compact intervals. Then the associated bi-ordered critical sets are isomorphic if and only if the restrictions of $\varphi_1$ and $\varphi_2$ to the minimal intervals containing all their critical points are right-left equivalent by orientation-preserving diffeomorphisms. Without restricting $\varphi_1$ and $\varphi_2$ in this way, one should also take into account their boundary values in order to construct a complete invariant of right-left equivalence.

 \item As explained in \cite[pages 17--18]{ghys_promenade} (see also \cite[Section 3.2.6]{sorea2018shapes}), the comparison between the two total order relations on a bi-ordered set naturally gives rise to a permutation. 
The permutations coming from Morse functions were called \emph{snakes} by Arnold (see \cite{snakes}, \cite{CN}, \cite[Definition 1.4]{sorea_portugaliae}). We will use again this terminology in Section \ref{sec:example} (see Figure \ref{fig:snakeThreeCusps}).
\end{enumerate}
\end{remark}

\begin{example}
\label{ex:snake4}
Let us consider the Morse function $y \mapsto z = \varphi(y)$ whose graph is pictured in Figure \ref{fig:biorder}. 

\begin{figure}[H]
\myfigure{0.9}{%
	\begin{tikzpicture}[scale=0.7]
	
\draw[->,>=latex,thick, gray] (-0.5,0)--(6,0) node[below,black] {$y$};
\draw[->,>=latex,thick, gray] (0,-0.5)--(0,5) node[left,black] {$z$};

\draw[looseness=0.5,green!70!black, very thick] (0.1,0.25) to[out=80,in=180] 
(1,3) to[out=0,in=180] 
(2,1) to[out=0,in=180] 
(3,4) to[out=0,in=180] 
(4,2) to[out=0,in=-120] 
(5,4.75) node[right,black]{$z=\varphi(y)$}
;

\fill[orange] (1,3) circle (3pt) node[above,black]{$p_1$};
\fill[orange] (2,1) circle (3pt) node[below,black]{$p_2$};
\fill[orange] (3,4) circle (3pt) node[above,black]{$p_3$};
\fill[orange] (4,2) circle (3pt) node[below,black]{$p_4$};

\fill[blue] (1,0) circle (1pt) node[below]{$1$};
\fill[blue] (2,0) circle (1pt) node[below]{$2$};
\fill[blue] (3,0) circle (1pt) node[below]{$3$};
\fill[blue] (4,0) circle (1pt) node[below]{$4$};

\fill[green!70!black] (0,1) circle (1pt) node[left]{$1$};
\fill[green!70!black] (0,2) circle (1pt) node[left]{$2$};
\fill[green!70!black] (0,3) circle (1pt) node[left]{$3$};
\fill[green!70!black] (0,4) circle (1pt) node[left]{$4$};

\draw [->,>=latex,blue!20, line width=3pt] (0.5,-1.2) -- ++(4.25,0) node[midway,below,black]{order $<_s$};
\draw [->,>=latex,green!40, line width=3pt] (-1.4,0.5) -- ++(0,4) node[midway,left,black]{order $<_t$};
\end{tikzpicture}%
}
\caption{A Morse function and the two total orders on its critical set.}
\label{fig:biorder}
\end{figure}
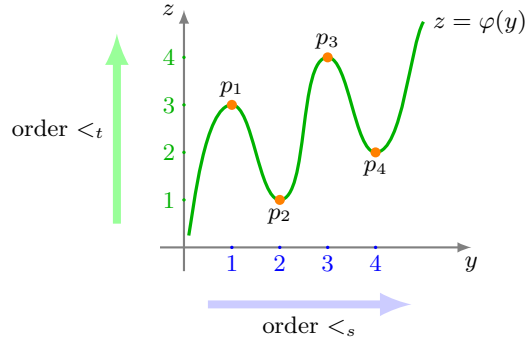

Its critical graph $\Crit(\varphi)$ has $4$ elements $p_1, \dots , p_4$.
Since $y_1 < y_2 < y_3 < y_4$ and $z_2 < z_4 < z_1 < z_3$, the two orders on it are :
$$p_1 \order_s p_2 \order_s p_3 \order_s p_4
\qquad \text{ and } \qquad
p_2 \order_t p_4 \order_t p_1 \order_t p_3.
$$

The associated snake is 
$$\pi_\varphi = \begin{pmatrix}
			{\color{blue} 1 } & {\color{blue} 2} & {\color{blue} 3} & {\color{blue} 4} \\
			{\color{green!70!black} 3 } & {\color{green!70!black} 1 } & {\color{green!70!black} 4 } & {\color{green!70!black} 2 } \\
		\end{pmatrix}.$$
It encodes the relation between the two orders for the points $p_i = (y_i,z_i)$ ($i=1,\ldots,4$): $\pi_\varphi(i) = j$ means that the $i$-th critical value $z_i=\varphi(y_i)$ is at $j$-th rank among critical values. 
\end{example}

\subsection{Right-reduced functions, morsifications and their Morse rectangles}
\label{ssec:Morsifications}

We define now the notion of \emph{morsification} of a univariate singularity, paying attention to the intervals of definition of the associated Morse functions: 

\begin{definition}
\label{def:morsif}
Let $f\in \Rr\{x, y\}$ be such that $f(0, 0) = 0$ and $f(0, y) \not\equiv 0$.
Let $F_x(y) \in \Rr\{x, y\}$ be a primitive of $f$ in the sense of Formula (\ref{eq:PrimitiveOff}).
A \defi{Morse rectangle of $F_x(y)$} is a product $[0, \epsilon] \times I$, where $\epsilon > 0$ and $I$ is a compact interval neighborhood of the origin in the $y$-axis such that:
 \begin{enumerate}
 \item The primitive $F_x(y)$ is convergent on $[0, \epsilon] \times I$.
 \item $F_0 : I \to \Rr$ has $0$ as single critical point.
 \item $F_{x_0} : I \to \Rr$ is a Morse function for every $x_0 \in (0, \epsilon]$.
 \end{enumerate}

We say that $F_x(y)$ is a \defi{morsification} (of $F_0(y)$) if it admits a Morse rectangle.
\end{definition}

\begin{figure}[H]
\myfigure{0.8}{%
	\begin{tikzpicture}[scale=1.5]
\draw [->, >=latex, gray, thick](-0.5,0) -- (3,0) node[below,black] {$x$};
\draw [->, >=latex, gray, thick] (0,-2.0)--(0,2) node[left,black] {$y$};

\draw[brown, ultra thick, opacity=0.75] (0,-1.25) rectangle ++(1,2.5);

\draw[ultra thick, color=blue]  (0,0) .. controls (0.5,0) and (1,0) .. (2,1.1) node[right] {$\xi_i$};
\draw[ultra thick, color=blue]  (0,0) .. controls (0,0.25) and (1,1) .. (2.1,0.75) ;
\draw[ultra thick, color=blue]  (0,0) .. controls (0.25,0) and (1.25,0) .. (2,-0.75);
\draw[ultra thick, color=blue]  (0,0) .. controls (0.15,-0.05) and (1.5,0) .. (2,-1.75);

\node[draw,circle, inner sep=1.5pt,color=black, fill=black] at (0,0){};

\draw[<->,>=latex,brown] (0,-1.5) -- ++(1,0) node[midway,below]{$\epsilon$};
\draw[<->,>=latex,brown] (-0.25,-1.25) -- ++(0,2.5) node[pos=0.25,left]{$I$};

\end{tikzpicture}%
}
\caption{A Morse rectangle of $f$.}
\label{fig:adaptedRectangle}
\end{figure}
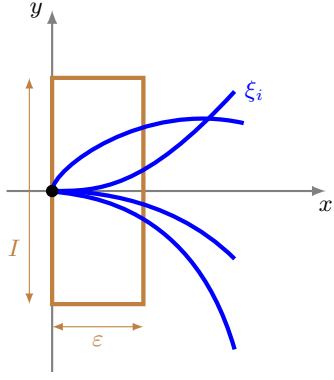

Let us introduce now a notion of reducedness of real series adapted to their study in the right half-plane $x \geq 0$. Geometrically, this means that we assume that the right semi-branches of $f$ are reduced in the divisor of $f$.
\begin{definition} 
\label{def:rightred}
The series $f\in \Rr\{x,y\}$ is \defi{right-reduced} if $f(0,0)=0$, $f(0,y) \not\equiv 0$ and if all the real roots $\xi_i$ of $f$ are pairwise distinct, that is, if the multi-set $\mathcal{R}_{\Rr}(f)$ is a set.
\end{definition}

\begin{example}
The series $(y^2 + x)^3(y^2- x)$ is right-reduced, but it is not reduced as an element of the ring $\Rr\{x, y\}$. 
\end{example}

\begin{proposition} 
\label{prop:morsif}
Let $f\in \Rr\{x, y\}$ be a right-reduced series and $F_x(y)$ be a primitive of $f$. 
Assume that the series $F_x(\xi_i) \in \Rr\{ x^{\frac{1}{\Nn}} \}$ are pairwise distinct when $\xi_i$ varies among the real Newton-Puiseux roots of $f$. 
Then a Morse rectangle $[0, \epsilon] \times I$ of $F_x(y)$ exists. Moreover, the bi-ordered critical graphs $(\Crit(F_{x_0}),\order_s,\order_t)$ of the Morse functions $F_{x_0} : I \to \Rr$ are isomorphic for all $x_0 \in (0, a]$.
\end{proposition}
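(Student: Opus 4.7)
\emph{Proof plan.} The strategy is to construct $I$ and $\epsilon$ so that, for every $x_0 \in (0, \epsilon]$, the critical points of $F_{x_0}$ on $I$ are precisely the values $\xi_i(x_0)$ of the real Newton-Puiseux roots of $f$; then observe that both total orders on the critical graph are determined by signs of leading coefficients of non-vanishing real Newton-Puiseux series, and hence are independent of $x_0$.

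First I would pick $\delta > 0$ such that $y = 0$ is the only zero of $f(0, y)$ on $I := [-\delta, \delta]$; this is possible because $f(0, y) \not\equiv 0$ has isolated zeros, and it forces $F_0 : I \to \Rr$ to admit $0$ as its only critical point. Writing the Weierstrass factorization (\ref{eq:WeierstrassPrepThm}) as $f = u(x,y) \prod_i (y - \xi_i) \prod_l (y - \eta_l)(y - \bar\eta_l)$, with $\xi_i$ real and $\eta_l, \bar\eta_l$ non-real conjugate pairs, I would then choose $\epsilon > 0$ small enough to ensure simultaneously: (i) convergence of $F_x(y)$ on $[0,\epsilon] \times I$ and of every root on $[0,\epsilon]$; (ii) $|\xi_i(x_0)| < \delta$ for every $i$ and every $x_0 \in [0,\epsilon]$, using $\xi_i(0) = 0$; and (iii) the non-vanishing on $(0, \epsilon]$ of the following real Newton-Puiseux series: $\xi_i - \xi_j$ for $i \neq j$ (non-zero because $f$ is right-reduced), $\operatorname{Im}(\eta_l)$ (non-zero because $\eta_l \notin \Rr\{x^{\frac{1}{\Nn}}\}$), $u(x, \xi_i(x))$ (non-zero because $u(0,0) \neq 0$), and $F_x(\xi_i) - F_x(\xi_j)$ for $i \neq j$ (non-zero by the standing hypothesis). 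Each is a non-zero element of $\Rr\{x^{\frac{1}{\Nn}}\}$, hence has a definite sign equal to that of its leading coefficient on some interval $(0, \epsilon_k]$, and the intersection of finitely many such intervals is still of the form $(0, \epsilon]$.

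On $[0,\epsilon] \times I$, the three conditions of Definition \ref{def:morsif} follow. The real zeros of $y \mapsto f(x_0, y)$ in $I$ for $x_0 \in (0, \epsilon]$ are exactly the values $\xi_i(x_0)$, since non-real roots give non-real values by (iii) and since (ii) keeps every real root inside $I$. Each such critical point is non-degenerate because
\[
\partial_y f(x_0, \xi_i(x_0)) = u(x_0, \xi_i(x_0)) \prod_{\gamma \neq \xi_i}\bigl(\xi_i(x_0) - \gamma(x_0)\bigr)
\]
is non-zero by (iii); the critical values are pairwise distinct by (iii); and (ii) places the critical points in the interior of $I$. For the moreover statement, the source order $\xi_i(x_0) < \xi_j(x_0)$ coincides with $\xi_i \order_{\Rr} \xi_j$, and the target order $F_{x_0}(\xi_i(x_0)) < F_{x_0}(\xi_j(x_0))$ coincides with $F_x(\xi_i) \order_{\Rr} F_x(\xi_j)$. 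Both are independent of $x_0 \in (0, \epsilon]$, so the tautological bijection $\xi_i \mapsto (\xi_i(x_0), F_{x_0}(\xi_i(x_0)))$ gives, for any $x_0, x_0' \in (0, \epsilon]$, an isomorphism between the bi-ordered sets $\Crit(F_{x_0})$ and $\Crit(F_{x_0'})$.

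I expect the main obstacle to be the clean identification of the critical points of $F_{x_0}$ in $I$ with the evaluations of the real Newton-Puiseux roots, specifically ruling out spurious real critical values arising from the non-real roots $\eta_l$, and checking that the finitely many non-vanishing conditions of step (iii) can be enforced on a single common right neighborhood of $0$.
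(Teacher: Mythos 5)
Your proposal is correct and follows essentially the same route as the paper: choose a rectangle on which the real Newton-Puiseux roots of $f$ evaluate to the critical points, enforce finitely many non-vanishing conditions by shrinking $\epsilon$, and deduce constancy of the bi-order from the constant sign of the relevant series on $(0,\epsilon]$. The only cosmetic differences are that you compute $\partial_y f$ directly via the product formula where the paper first strips off the unit $u$, and you phrase the constancy of the bi-order in terms of the real total order $\order_{\Rr}$ on $\Rr\{x^{\frac{1}{\Nn}}\}$ where the paper invokes the intermediate value theorem; both amount to the same observation.
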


\begin{proof}
We first choose a rectangle $[0,\epsilon] \times I$ included in the convergence disk of $f(x,y)$ and of $F_x(y)$. 
We may reduce $I$ in order that $F_0$ has a single critical point at $y =0$.
We may then diminish $\epsilon$ such that the roots $\xi_i : [0, \epsilon] \to I$ of $f$ converge on $[0,\epsilon]$ and that for every $x_0 \in (0, \epsilon]$, one has $\xi_i(x_0) \neq \xi_j(x_0)$ whenever $i \neq j$.

Fix $x_0 \in (0,\epsilon]$. Let us prove that $F_{x_0} : I \to \Rr$ has non-degenerate critical points.
The function $y \mapsto F_{x_0} (y)$ has a critical point at $y_0$ iff $f(x_0,y_0) = 0$.
Moreover this critical point is degenerate iff $\partial_y f(x_0,y_0) = 0$.
We notice that multiplication of $f$ by a unit does not change the nature of the critical point.
Indeed, let $f(x,y) = u(x,y) g(x,y)$ with $u(0,0) \neq 0$. We may assume that $u(x,y) \neq 0$ for $(x,y) \in (0,\epsilon] \times I$. Now $f(x_0,y_0) = 0$ iff $g(x_0,y_0) = 0$.
Moreover for such a critical point, using that $g(x_0,y_0)=0$:
$$
\partial_y f(x_0,y_0)= 0 
\iff \partial_y u(x_0,y_0) g(x_0,y_0) + u(x_0,y_0) \partial_y g(x_0,y_0) = 0
\iff \partial_y g(x_0,y_0) = 0.
$$
Hence one may assume that $f(x,y) = \prod_{i=1}^n (y-\xi_i) \cdot \prod_{l=1}^m [(y-\eta_l)(y-\overline{\eta_l})]$.
The function $y \mapsto F_{x_0}(y)$ having a non-degenerate critical point at $y_0 \in I$ is equivalent to the polynomial $f(x_0,y)$ having $y_0$ as a simple root.
As $f(x_0,y)$ factors into $\prod_{i=1}^n (y-\xi_i(x_0)) \cdot \prod_{l=1}^m [ (y-\eta_l(x_0))(y-\overline{\eta_l(x_0)}) ]$  and $\xi_i(x_0) \neq \xi_j(x_0)$ whenever $i \neq j$, we see that the roots of $f(x_0,y)$ on $I$ are exactly the real numbers $\xi_i(x_0)$. By the same condition, these numbers are pairwise distinct, which proves our claim.

We now prove that the critical values of $F_{x_0} : I \to \Rr$ are pairwise distinct.
As the series $F_x(\xi_i)$ are assumed to be pairwise distinct, we may diminish more $\epsilon >0$, such that for all $x_0 \in (0,\epsilon]$, one has $F_{x_0}(\xi_i(x_0)) \neq F_{x_0}(\xi_j(x_0))$ whenever $i\neq j$. This means that $F_{x_0} : I \to \Rr$ is a Morse function.

Finally, we prove that the bi-order remains constant for all $x_0 \in (0,\epsilon]$.
Fix $x_0 \in (0,\epsilon]$. Fix $i,j$ such that $\xi_i(x_0) < \xi_j(x_0)$. We may assume that 
$F(\xi_i(x_0)) < F(\xi_j(x_0))$ (the proof in the case of the opposite inequality is similar). 
Fix $x_1 \in (0,x_0)$. Then $\xi_i(x_1) < \xi_j(x_1)$ (otherwise, by the continuity of the function $\xi_j - \xi_i$,  there would exist $x_2 \in [x_1,x_0]$ such that $\xi_i(x_2) = \xi_j(x_2)$, which is impossible by our choices of $\epsilon$ and $I$). 
Similarly, we have $F(\xi_i(x_1)) < F(\xi_j(x_1))$.
In other words, the bi-ordered sets $(\Crit(F_{x_0}),\order_s,\order_t)$ and $(\Crit(F_{x_1}),\order_s,\order_t)$ are isomorphic.
\end{proof}

\begin{remark}
\sauteligne
\begin{enumerate}
 \item Notice that if $G_x(y)$ is another primitive of $f$, then $G_x(y) = g(x) + F_x(y)$. Therefore, for a fixed $x_0>0$, the graph of $G_{x_0} : I \to \Rr$ is a vertical translation of the graph of $F_{x_0} : I \to \Rr$, hence they have equivalent bi-ordered  critical graphs. 
 The bi-ordered critical graph is also independent of the choice of a Morse rectangle.

 \item Because the critical points of $F_{x_0}: I \to \Rr$ are $(\xi_i(x_0))_{1 \leq i \leq n}$ and the critical values are $(F(\xi_i(x_0)))_{1 \leq i \leq n}$, the ``right-reduced'' hypothesis implies that the critical points of $F_{x_0} : I \to \Rr$ are non-degenerate; the hypothesis on distinct $F_x(\xi_i)$ implies that the critical values of $F_{x_0}(y)$ are pairwise distinct when $x_0$ is small enough.
\end{enumerate}
\end{remark}

\begin{definition}
\label{def:combinmorsif}
 Let $F_x(y) \in \Rr\{x,y\}$ be a morsification. Its \defi{combinatorial type} is the isomorphism class of the bi-ordered critical graphs of the functions $F_{x_0} : I \to \Rr$ chosen as in Proposition \ref{prop:morsif}. 
\end{definition}

Our goal is to describe the combinatorial types of morsifications starting from the series $F_x(y) \in \Rr\{x,y\}$ defining them. This goal will be achieved in Theorem A, under the hypothesis that $f = \partial_y F_x$ satisfies the so-called \emph{injectivity condition}, explained in Subsection \ref{ssec:injCond}.

\section{Considerations about trees}
\label{sec:contact-trees}

Since trees play a key role in our results, in this section we explain basic facts concerning them, partly following \cite[Section 1.4.1]{sorea2018shapes} and the references therein.

\subsection{Abstract trees}
\label{ssec:abstrTrees}
 
A \defi{tree} is a topological space homeomorphic to a finite connected graph without cycles. Except when it is reduced to a point, a tree has an infinite number of points.  The \defi{valency} of a point of a tree is the number of connected components of $T \setminus \{P\}$. Its \defi{vertices} are its points of valency different from $2$ and its \defi{edges} are the closures of the connected components of the complement of its set of vertices. 
Given two points $P, Q$ of a tree, we denote by $[P, Q]$ the unique segment joining them.

For us, a \defi{rooted tree} has a marked point $O$ of valency $1$, called the \defi{root}. We choose this hypothesis about valency because all the rooted trees considered in this paper, namely the \emph{contact trees} of Subsection \ref{ssec:contree}, satisfy it.
Every rooted tree $T$ is endowed with a natural \defi{partial order} $\preceq_{T}$: given two distinct points $P$ and $Q$ of $T$, $P \preceq_{T} Q$ if and only if $[O,P]\subset [O,Q]$. A \defi{leaf} of $T$ is a maximal element for the partial order $\preceq_{T}$. 

Denote by $\mathcal{V}(T)$ the set of vertices, by $\mathcal{L}(T)$ the set of leaves and by 
$\mathcal{V}^\circ(T) = \mathcal{V}(T) \setminus (\mathcal{L}(T) \cup \{O\})$ the set of \defi{internal vertices} of $T$. 
If $P$ is an internal vertex of $T$, then an \defi{outgoing edge of $T$ at $P$} is by definition an edge $[P, Q]$ that is not contained in a segment of the form $[O,P]$. We denote by $\mathcal{E}_{T}^+(P)$ the set of outgoing edges of $T$ at $P$. These sets will be used in Definition \ref{def:plastr} for the formulation of the notion of \emph{planar tree}.

To any two points $P$ and $Q$ of $T$ we associate their \defi{greatest lower bound} $P \wedge Q$ relative to the partial order $\preceq_T$. That is (see Figure \ref{fig:wedgeOp}): 
$$[O,P\wedge Q] = [O,P] \cap [O,Q].$$

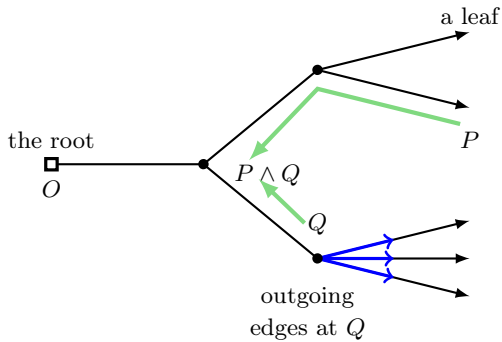
\begin{figure}[H]
\myfigure{0.5}{%
	\begin{tikzpicture}[scale=1]
\path
  (0,0) coordinate(R)
  ++(2,0) coordinate(S)
  ++(2,0) coordinate(T)
  (T)
  +(3,-2.5) coordinate(V1)
  +(3,2.5) coordinate(V2)
  (V1)
  +(4,-1) coordinate(W1)  
  +(4,0) coordinate(W2)  
  +(4,1) coordinate(W3)  
  (V2)
  +(4,-1) coordinate(W4)   
  +(4,1) coordinate(W5)  
;
\draw[thick]
  (R) -- (S)
  (S) -- (T)
  (T) -- (V1)
  (T) -- (V2)
;
\draw[->,>=latex,thick] (V1) -- (W1);
\draw[->,>=latex,thick]   (V1) -- (W2);
\draw[->,>=latex,thick]  (V1) -- (W3);
\draw[->,>=latex,thick]   (V2) -- (W4);
\draw[->,>=latex,thick]   (V2) -- (W5);

\draw[->, blue, very thick] (V1) -- ($(V1)!0.5!(W1)$) node[below left,black, text width=2cm,align=center]{outgoing edges at $Q$};
\draw[->,  blue, very thick] (V1) -- ($(V1)!0.5!(W2)$);
\draw[->,  blue, very thick] (V1) -- ($(V1)!0.5!(W3)$);

\foreach \v/\t in {T,V1,V2}{
  \draw (\v) node[scale=4]{.};
}
\filldraw[very thick,fill=white] (R) ++ (-0.15,-0.15) rectangle ++(0.3,0.3);
\path
  (R) node[above=3pt]{the root}
  (R) node[below=3pt]{$O$}
  (W5) node[above]{a leaf}
  (W4) node[below=2mm]{$P$}
  (V1) node[above=2mm]{$Q$}
  (T) node[anchor = 170, inner sep=4mm]{$P \wedge Q$}
;

\draw[<-, >=latex, green!70!black!50, ultra thick] ($(T)+(-15:1.5)$) -- ($(V1)+(110:1)$);
\draw[<-, >=latex, green!70!black!50, ultra thick] ($(T)+(5:1.2)$) -- ($(V2)+(-90:0.5)$) -- ($(W4)+(-120:0.5)$);
\end{tikzpicture}%
}
\caption{A set of outgoing edges and the greatest lower bound of two vertices.}
\label{fig:wedgeOp}
\end{figure}

\subsection{Planar trees}
\label{ssec:planarTrees} 

In this subsection we explain the notion of \emph{planar tree}, which is essential in the sequel, as one may associate canonically such a tree to any finite set of real Newton-Puiseux series (see Subsection \ref{ssec:contree}): 

\begin{definition} 
\label{def:plastr}
A \defi{planar structure} on a rooted tree $T$ is a choice of a total order $\order_P$ on each set $\mathcal{E}_{T}^+(P)$ of outgoing edges, when $P$ varies among the internal vertices of $T$. A \defi{planar tree} is a rooted tree endowed with a planar structure.
\end{definition}
 
The terminology \emph{planar structure} is motivated by the fact that such a structure is equivalent to the choice of an isotopy class of embeddings of the rooted tree in any given oriented plane. This equivalence would not be true any more if the root were of valency at least $2$. Indeed, in that case an isotopy class of embeddings in an oriented plane would only be fixed if one chooses moreover a cyclic order of the edges adjacent to the root.

When embedding canonically a planar tree $T$ in an oriented plane, one sees that its set of leaves $\mathcal{L}(T)$ is canonically totally ordered (see Figure \ref{fig:OrderLeaves}). This \defi{associated total order} may also be defined intrinsically (without mentioning an embedding into a plane) as follows: if $\ell_1, \ell_2$ are two distinct leaves of $T$ and $P := \ell_1 \wedge \ell_2$, then $\ell_1 < \ell_2$ if and only if $e_1 \order_P e_2$, where $e_1, e_2 \in \mathcal{E}^+_T(P)$ are the outgoing edges at $P$ going to $\ell_1$ and $\ell_2$ respectively.

\begin{figure}[H]
\myfigure{0.5}{%
	\begin{tikzpicture}[scale=1.5]
\path
  (0,0) coordinate(R)
  ++(2,0) coordinate(T)
  (T)
  +(2,-1.5) coordinate(V1)
  +(2,1.5) coordinate(V2)
  (V1)
  +(3,-1) coordinate(W1)  
  +(3,1) coordinate(W2)  
  (V2)
  +(3,-1) coordinate(W3)  
  +(3,0) coordinate(W4)   
  +(3,1) coordinate(W5)  
;
\draw[thick]
  (R) -- (T)
  (T) -- (V1)
  (T) -- (V2)
;
\draw[->,>=latex,thick] (V1) -- (W1);
\draw[->,>=latex,thick] (V1) -- (W2);
\draw[->,>=latex,thick] (V2) -- (W3);
\draw[->,>=latex,thick] (V2) -- (W4);
\draw[->,>=latex,thick] (V2) -- (W5);

\filldraw[ultra thick,fill=white] (R) ++ (-0.1,-0.1) rectangle ++(0.2,0.2);
\foreach \v/\t in {T,V1,V2}{
  \draw (\v) node[scale=4]{.};
}
\draw[->,>=latex,red!30, line width=2pt] ($(T) + (-50:1)$) arc (-50:50:1);
\draw[->,>=latex,red!30, line width=2pt] ($(V1) + (-30:1)$) arc (-30:30:1);
\draw[->,>=latex,red!30, line width=2pt] ($(V2) + (-30:1)$) arc (-30:30:1) node[pos=0,below,red,text width=1.5cm,align=center]{local ordering};

\draw[->,>=latex,blue!20, line width=3pt] ($(W1) + (0.5,-0.3)$) -- ($(W5) + (0.5,0.3)$) node[midway,right,text width=2cm,blue,align=center]{ordering of the leaves};

\end{tikzpicture}%
}
\caption{Canonical total order on the leaves of a planar tree.}
\label{fig:OrderLeaves}
\end{figure}
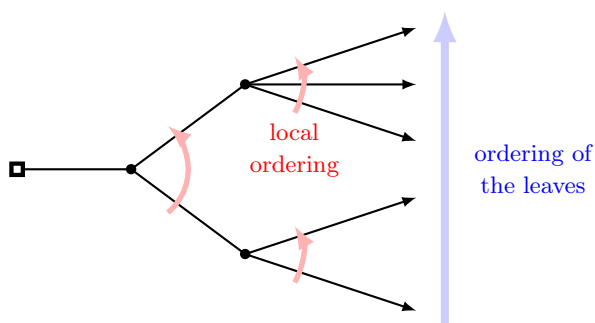

Not every total order on its set $\mathcal{L}(T)$ of leaves comes from a planar structure on a rooted tree $T$, as shown by the following proposition:
\begin{proposition}
\label{prop:compatord}
Let $\order$ be a total order on the set of leaves of a rooted tree $T$. 
The necessary and sufficient condition for $\order$ to come from a planar structure on $T$ is that for any two incomparable vertices $P, Q$ of $T$ (that is, vertices such that $P \npreceq_T Q$ and $Q \npreceq_T P$), the leaves $\preceq_T$-greater than $P$ are either all $\order$-smaller or all $\order$-bigger than the leaves $\preceq_T$-greater than $Q$. In this case, the total order $\order$ determines the planar structure uniquely.
\end{proposition}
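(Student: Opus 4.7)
The plan is to prove both directions by translating everything back to the intrinsic description of the total order on leaves associated to a planar structure, recalled right before the proposition.

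For necessity, assume $\order$ comes from a planar structure $(\order_P)_{P \in \mathcal{V}^\circ(T)}$ and pick incomparable vertices $P, Q$. I would set $R := P \wedge Q$; since $P, Q$ are incomparable, $R$ is strictly below both of them, so the outgoing edges $e_P, e_Q \in \mathcal{E}^+_T(R)$ lying along $[R,P]$ and $[R,Q]$ are distinct. Up to swapping $P$ and $Q$, assume $e_P \order_R e_Q$. For any leaves $\ell_1 \succeq_T P$ and $\ell_2 \succeq_T Q$, the segment $[O,\ell_1]$ passes through $e_P$ while $[O,\ell_2]$ passes through $e_Q$, so $\ell_1 \wedge \ell_2 = R$ and the outgoing edges at $R$ going toward $\ell_1$ and $\ell_2$ are precisely $e_P$ and $e_Q$. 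The intrinsic definition therefore gives $\ell_1 \order \ell_2$, which shows that every leaf above $P$ is $\order$-smaller than every leaf above $Q$.

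For sufficiency, assume the condition holds and construct a planar structure as follows. Fix an internal vertex $P$ and two outgoing edges $e_i = [P,Q_i]$, $e_j = [P,Q_j]$. The vertices $Q_i, Q_j$ are incomparable (their greatest lower bound is $P$, which differs from both), so by hypothesis the set $L_i$ of leaves $\succeq_T Q_i$ and the set $L_j$ of leaves $\succeq_T Q_j$ are $\order$-separated: one lies entirely below the other. Declare $e_i \order_P e_j$ iff $L_i$ is $\order$-below $L_j$. Well-definedness of this comparison is exactly the content of the hypothesis applied to $(Q_i, Q_j)$; totality and transitivity of $\order_P$ on $\mathcal{E}^+_T(P)$ then follow from totality and transitivity of $\order$ on $\mathcal{L}(T)$.

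To finish, I would check that the associated total order on leaves coincides with $\order$. For two distinct leaves $\ell_1, \ell_2$, set $P := \ell_1 \wedge \ell_2$; the outgoing edges $e_1, e_2$ at $P$ along $[P,\ell_1]$ and $[P,\ell_2]$ are distinct, and $\ell_k$ belongs to the set of leaves above the endpoint of $e_k$. By the construction above, $e_1 \order_P e_2$ is equivalent to $\ell_1 \order \ell_2$, matching the intrinsic definition. Uniqueness is immediate, since at each internal vertex every outgoing edge has at least one leaf above it and $\order_P$ is forced to reflect the $\order$-comparison between any such leaves. The only genuinely delicate point is the well-definedness step in the sufficiency direction; everything else is routine bookkeeping with $\preceq_T$ and the wedge operation.
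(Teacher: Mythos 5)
Your proof is correct and follows essentially the same route as the paper's, translating in both directions between $\order$-separation of the descendant sets $D(P)$, $D(Q)$ and the local orders at the wedge vertex $R = P\wedge Q$. You additionally verify explicitly that the constructed planar structure induces back exactly $\order$ on $\mathcal{L}(T)$ and spell out the uniqueness claim, two steps the published proof leaves implicit; these are harmless strengthenings rather than a genuinely different argument.
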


\begin{proof}
    \emph{Let us assume first that $T$ is endowed with a planar structure.} Denote by $\order$ the associated total order on $\mathcal{L}(T)$. For each vertex $P$ of $T$, let $D(P)$ be the set of leaves $\preceq_T$-greater than $P$ (which may be thought as the set of \emph{descendants} of $P$, if $T$ is imagined as a genealogical tree). Consider two incomparable vertices $P, Q$ of $T$. Denote $R := P \wedge Q  \notin \{P, Q\}$. Let $e_P$ be the outgoing edge at $R$ directed towards $P$ and define similarly $e_Q$. We may assume, possibly after permuting $P$ and $Q$, that $e_P \order_R e_Q$. Choose $\ell_P \in D(P)$ and $\ell_Q \in D(Q)$. Then $e_P$ is the outgoing edge at $R$ directed towards $\ell_P$, and similarly $e_Q$ goes towards $\ell_Q$. The definition of the total order $\order$ on $\mathcal{L}(T)$ and the fact that $e_P \order_R e_Q$ imply that $\ell_P \order \ell_Q$. Therefore, all leaves in $D(P)$ are $\order$-smaller than all the leaves in $D(Q)$. 

    \emph{Let us assume now that $\mathcal{L}(T)$ is endowed with a total order $\order$ verifying the given condition.} Consider a vertex $P$ of $T$. We want to show that $\order$ determines a canonical total order $\order_P$ on $\mathcal{E}_{T}^+(P)$. If $P$ is a leaf, there is nothing to prove. Assume therefore that $P$ is not a leaf. Let $e_1$ and $e_2$ be two distinct outgoing edges at $P$. Let us write $e_i = [P, P_i]$. As the vertices $P_1$ and $P_2$ are incomparable, we know that the elements of $D(P_1)$ are either all $\order$-smaller or all $\order$-bigger than the elements of $D(P_2)$. In the first case we set $e_1 \order_P e_2$ and in the second one $e_2 \order_P e_1$. We get an antisymmetric binary relation $\order_P$ on the set $\mathcal{E}_{T}^+(P)$. As $\order$ is a total order, this is also the case for $\order_P$. 
\end{proof}

Proposition \ref{prop:compatord} motivates the following definition, which will be used in the formulation of Proposition \ref{prop:realtotplan}:
\begin{definition} 
 \label{def:plantotord}
 Let $T$ be a rooted tree. A total order on the set $\mathcal{L}(T)$ of leaves of $T$ is called \defi{planar relative to $T$} if it is determined by a planar structure on $T$. 
\end{definition}

\begin{example}
\label{ex:planarRelativeToT}
Consider the abstract rooted tree $T$ of Figure \ref{fig:exPlanarRelativeToT}. Take the following total order on $\mathcal{L}(T)= \{\ell_1, \ell_2, \ell_3\}$:
$$\ell_2 < \ell_1 < \ell_3. $$
The vertices $P := \ell_1$ and $Q := \ell_2 \wedge \ell_3$ are incomparable, but $\ell_1$, which is the only leaf $\preceq_T$-greater than $P$ is neither $\order$-smaller nor $\order$-bigger than both $\ell_2$ and $\ell_3$, which are the leaves $\preceq_T$-greater than $Q$. Therefore this total order is not planar relative to $T$. This example shows also that it is important to allow the vertices $P$ and $Q$ appearing in Proposition \ref{prop:compatord} to be leaves. 

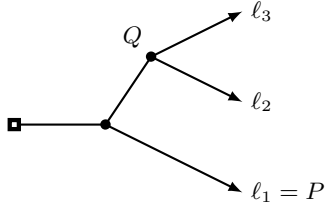
\begin{figure}[H]
\small
\myfigure{0.6}{
	\begin{tikzpicture}[scale=1]
\path
  (0,0) coordinate(R)
  ++(2,0) coordinate(T)
  (T)
  +(3,-1.5) coordinate(V1)
  +(1,1.5) coordinate(V2)
  (V2)
  +(2,-1) coordinate(W1)  
  +(2,0) coordinate(W2)    
  +(2,1) coordinate(W3)  
;

\draw[thick]
  (R) -- (T)
  (T) -- (V2)
;
\draw[->,>=latex,thick] (T) -- (V1);
\draw[->,>=latex,thick] (V2) -- (W1);
\draw[->,>=latex,thick] (V2) -- (W3);

\filldraw[ultra thick,fill=white] (R) ++ (-0.1,-0.1) rectangle ++(0.2,0.2);


\path (V2) node[above left]{$Q$};
\foreach \v in {T, V2}{
  \path  (\v) node[scale=4]{.};
}
\path (V1) node[right]{$\ell_1=P$};
\path (W1) node[right]{$\ell_2$};
\path (W3) node[right]{$\ell_3$};





\end{tikzpicture}%

}
\caption{The abstract rooted tree from Example \ref{ex:planarRelativeToT}.}
\label{fig:exPlanarRelativeToT}
\end{figure}

\end{example}

\subsection{The wedge map of a planar tree}
\label{ssec:wedge map} 

Let $(X,\order)$ be a finite totally ordered set. Denote its elements by $x_1 < x_2 < \cdots < x_n$.
A \defi{basic interval} of $X$ is a subset $\{ x_i,x_{i+1} \}$ of two successive elements of $X$.
Denote by $BI(X)$ the set of basic intervals of $(X,\le)$. This set is empty if and only if $n \le 1$. 


Let $T$ be a planar tree. The sets $\mathcal{L}(T)$ and $\mathcal{E}^+_T(P)$, where $P$ is an internal vertex of $T$, are therefore naturally totally ordered, as explained in Subsection \ref{ssec:planarTrees}. 
Let  $\{ \ell, \ell' \}$ be a basic interval of $\mathcal{L}(T)$.  Denote $P := \ell \wedge \ell'$. Let $e$ and $e'$ be the outgoing edges going from $P$ to the leaves $\ell$ and $\ell'$ (see Figure \ref{fig:basicInt}). Then $\{ e, e' \}$ is a basic interval of $\mathcal{E}^+_T(P)$, by the definition of the total order on the set of leaves of a planar tree. This construction defines the \defi{wedge map} of the planar tree $T$:
$$W : 
BI\big(\mathcal{L}(T)\big) \longrightarrow \bigsqcup_{P\in\mathcal{V}^\circ (T)} BI\big(\mathcal{E}^+_T(P)\big).$$

Note that when $P \in \mathcal{V}^\circ (T)$, one has $BI\big(\mathcal{E}^+_T(P)\big) \neq \emptyset$.

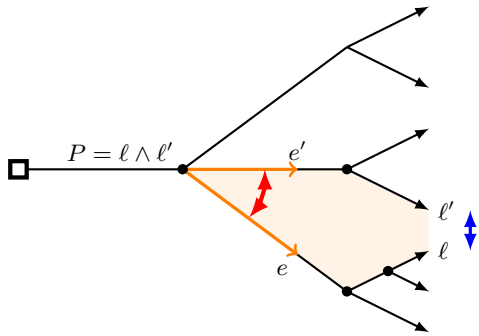
\begin{figure}[H]
\myfigure{0.9}{%
	\begin{tikzpicture}[scale=1.2]
\path
  (0,0) coordinate(R)
  ++(2,0) coordinate(T)
  (T)
  +(2,-1.5) coordinate(V1)
  +(2,0) coordinate(V2)
  +(2,1.5) coordinate(V3)
  (V1)
  +(0.5,0.25) coordinate(VV1)  
  +(1,-0.5) coordinate(W1)    
  +(1,0.5) coordinate(W3)  
  (VV1)
  +(0.5,-0.25) coordinate(W2)  
  (V2)
  +(1,-0.5) coordinate(W4)  
  +(1,0.5) coordinate(W5)   
  (V3)
  +(1,-0.5) coordinate(W6)  
  +(1,0.5) coordinate(W7)   
;

\fill[orange!10] (T) -- (V1) -- (W3) -- (W4) -- (V2) -- cycle;

\draw[thick]
  (R) -- (T)
  (T) -- (V1)
  (T) -- (V2)
  (T) -- (V3)
;
\draw[->,>=latex,thick] (V1) -- (W1);
\draw[->,>=latex,thick] (VV1) -- (W2);
\draw[->,>=latex,thick] (V1) -- (W3);
\draw[->,>=latex,thick] (V2) -- (W4);
\draw[->,>=latex,thick] (V2) -- (W5);
\draw[->,>=latex,thick] (V3) -- (W6);
\draw[->,>=latex,thick] (V3) -- (W7);

\draw[->, orange, very thick] (T) -- ($(T)!0.7!(V1)$) node[below left,black]{$e$};
\draw[->,orange, very thick] (T) -- ($(T)!0.7!(V2)$) node[above,black]{$e'$};

\filldraw[ultra thick,fill=white] (R) ++ (-0.1,-0.1) rectangle ++(0.2,0.2);
\foreach \v/\t in {T,V1,V2,VV1}{
  \draw (\v) node[scale=4]{.};
}
\draw[<->,>=latex,red, ultra thick, shorten <=2pt] ($(T) + (-40:1)$) arc (-40:0:1) node[midway,right,red,text width=1.5cm,align=center]{};
\draw[<->,>=latex,blue, very thick] ($(W3) + (0.5,-0.0)$) -- ($(W4) + (0.5,0.)$) node[midway,right,text width=2cm,blue]{};
\path (T) node[above left]{$P= \ell \wedge \ell'$};
\path (W3) node[right]{$\ell$};
\path (W4) node[right]{$\ell'$};
\end{tikzpicture}%
}
\caption{A basic interval of $\mathcal{L}(T)$ and its image in $\mathcal{E}^+_T(P)$ by the wedge map}.
\label{fig:basicInt}
\end{figure}

The following proposition will be crucial in Subsection \ref{ssec:signsdiff}, as well as in Subsection \ref{ssec:injCond}, in order to define the \emph{injectivity condition}: 

\begin{proposition}
\label{prop:wedgemap}
The wedge map $W$ of a planar tree is bijective.
\end{proposition}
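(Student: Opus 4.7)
The plan is to construct an explicit inverse $W^{-1}$ and verify that both compositions are identities. The essential preliminary is a \emph{contiguity lemma}: for every $v \in \mathcal{V}(T) \setminus \{O\}$, the descendant set $D(v) := \{\ell \in \mathcal{L}(T) : v \preceq_T \ell\}$ is an interval of the totally ordered set $(\mathcal{L}(T), \order)$. Indeed, suppose for contradiction that $\ell_1 \order m \order \ell_2$ with $\ell_1, \ell_2 \in D(v)$ and $m \in \mathcal{L}(T) \setminus D(v)$. Then $v$ cannot be a leaf (otherwise $D(v)$ would be a singleton), so $v \neq m$; moreover $v \not\preceq_T m$ (since $m \notin D(v)$) and $m \not\preceq_T v$ (since $m$ is a leaf, this would force $m = v$). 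Thus $v$ and $m$ are incomparable, and Proposition \ref{prop:compatord} forces all of $D(v)$ to be either $\order$-smaller or $\order$-bigger than $D(m) = \{m\}$, contradicting $\ell_1 \order m \order \ell_2$.

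From this lemma, for any $P \in \mathcal{V}^\circ(T)$ with outgoing edges $e_1 \order_P \cdots \order_P e_k$ directed respectively to vertices $v_1, \ldots, v_k$, the sets $D(v_1), \ldots, D(v_k)$ partition $D(P)$ into non-empty sub-intervals arranged in this order along $(\mathcal{L}(T), \order)$: by the intrinsic description of the leaf order recalled after Definition \ref{def:plastr}, whenever $\ell \in D(v_i)$ and $\ell' \in D(v_j)$ with $i < j$ one has $\ell \wedge \ell' = P$ and hence $\ell \order \ell'$. Combined with the contiguity of $D(P)$ in $\mathcal{L}(T)$, this shows that two consecutive blocks $D(v_i)$ and $D(v_{i+1})$ are adjacent intervals of $\mathcal{L}(T)$: no leaf at all lies between them in $\order$.

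This motivates the definition $W^{-1}(\{e_i, e_{i+1}\}) := \{\max_\order D(v_i), \min_\order D(v_{i+1})\}$, which is a basic interval of $\mathcal{L}(T)$ by the previous paragraph. Both verifications are then immediate. On the one hand, if $\ell := \max D(v_i)$ and $\ell' := \min D(v_{i+1})$, then $\ell \wedge \ell' = P$ and the outgoing edges at $P$ towards $\ell$ and $\ell'$ are precisely $e_i$ and $e_{i+1}$, so $W(W^{-1}(\{e_i, e_{i+1}\})) = \{e_i, e_{i+1}\}$. On the other hand, starting from a basic interval $\{\ell, \ell'\}$ of leaves with $\ell \order \ell'$, set $P := \ell \wedge \ell'$ and let $e, e'$ be the outgoing edges at $P$ directed to $\ell$ and $\ell'$ respectively; the hypothesis that no leaf lies strictly between $\ell$ and $\ell'$ forces $\ell = \max_\order D(e)$ and $\ell' = \min_\order D(e')$ (any further descendant of $e$ or $e'$ would produce an intermediate leaf), and it forces $e, e'$ to be consecutive in $\mathcal{E}^+_T(P)$ (any leaf below an intermediate outgoing edge would also be intermediate in $\order$). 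Hence $W^{-1}(W(\{\ell, \ell'\})) = \{\ell, \ell'\}$.

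The main obstacle is isolating and proving the contiguity lemma cleanly, since it is the single geometric input that glues the purely combinatorial bookkeeping together; once it is in hand, both directions of the bijection reduce to the short arguments above.
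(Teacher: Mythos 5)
Your proof is correct, and it takes a genuinely different route from the paper's. The paper first observes that the source and target of $W$ have the same cardinality (by an induction on the number of leaves, left to the reader), reducing bijectivity to surjectivity; it then proves surjectivity by producing, for a basic interval $\{e_j,e_{j+1}\}$ of $\mathcal{E}^+_T(P)$, the pair consisting of the largest descendant leaf of $e_j$ and the smallest descendant leaf of $e_{j+1}$, and invoking Proposition~\ref{prop:compatord} to see that this pair is a basic interval of $\mathcal{L}(T)$. You instead construct an explicit two-sided inverse and verify both compositions, so you never need the cardinality count. The geometric content is the same in both arguments --- it all hinges on the fact, extracted from Proposition~\ref{prop:compatord}, that descendant sets are $\order$-intervals --- but you isolate this as a standalone contiguity lemma, which makes the dependence on Proposition~\ref{prop:compatord} explicit and keeps the two verification steps purely mechanical. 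The trade-off is length: the paper's counting shortcut is quicker, while your version is more self-contained and gives slightly more (an explicit formula for $W^{-1}$). One tiny notational wrinkle: in your last paragraph you write $D(e)$ and $D(e')$ for edges, whereas $D(\cdot)$ was defined on vertices; it would be cleaner to write $D(v)$ for the far endpoint $v$ of the edge, as you did earlier.
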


\begin{proof}
The source and target of the wedge map $W$ have the same number of elements, as may be proved by induction on the number of leaves of $T$.  Therefore, in order to prove that $W$ is bijective, it is enough to prove that it is surjective. Consider a vertex $P \in \mathcal{V}^\circ (T)$ and a basic interval $\{e_j , e_{j+1} \}$ of $\mathcal{E}^+_T(P)$, with $e_j <_P e_{j+1}$. Let $\ell_{\iota(j)}$ be the $\order$-biggest leaf among the descendants of $e_j$ and $\ell_{\iota(j+1)}$ be the $\order$-lowest leaf among the descendants of $e_{j+1}$. By the construction of the total order $\order$ on $\mathcal{L}(T)$ explained in Subsection \ref{ssec:planarTrees}, we have $\ell_{\iota(j)} < \ell_{\iota(j+1)}$. By Proposition \ref{prop:compatord}, we see that $\{ \ell_{\iota(j)},  \ell_{\iota(j+1)}\}$ is a basic interval of $(\mathcal{L}(T), \order)$. As results from the definition of the wedge map, its image by $W$ is the basic interval $\{e_j , e_{j+1} \}$ of $\mathcal{E}^+_T(P)$. This shows that $W$ is surjective, therefore bijective.   
\end{proof}

Proposition \ref{prop:wedgemap} generalizes \cite[Corollary 2.20]{sorea_portugaliae}, which concerned only the case where the rooted tree was \emph{binary}, that is, where all its vertices had valency $1$ or $3$.

\subsection{Contact trees}
\label{ssec:contree}
 
Let us consider a finite set of Newton-Puiseux series 
$\mathcal{N} = \{\gamma_1,\ldots,\gamma_n \} \subset \Kk\{x^{\frac{1}{\Nn}}\}$, such that $\gamma_i(0)=0$ for all $i=1,\ldots,n$.
The \defi{contact tree} of the set $\mathcal{N}$, denoted by $T_\Kk(\mathcal{N})$ or by $T_\Kk(\gamma_1, \dots, \gamma_k)$, is a rooted tree encoding the valuations of pairwise differences of the elements of $\mathcal{N}$. It is canonically determined by the ultrametric distance $\mathrm{d}:\mathcal{N}\times \mathcal{N} \rightarrow (0,+\infty)$ defined by: 
\begin{equation*}
\label{eq:d}
 \mathrm{d}(\gamma_i,\gamma_j) :=\frac{1}{\val (\gamma_j-\gamma_i)}, 
\end{equation*} 
whenever $\gamma_i \neq \gamma_j$.
For details, we refer the reader to \cite[Section 9.4]{ultrametricsLipschitz} and references therein.
The contact tree is a version of the so-called \emph{Kuo-Lu tree}, introduced in \cite{KuoLu} (see \cite[Section 1.6.6]{handbookCurves}).

\medskip

Let us explain informally how to construct $T_\Kk(\mathcal{N})$ by gluing compact segments identified to $[0, \infty]$, one segment per series. Associate a copy $I_i$ of the interval $[0, \infty]$ to each series $\gamma_i \in \mathcal{N}$. The point of $I_i$ whose coordinate is $a \in [0, \infty]$ represents the formal monomial $x^a$. If $\gamma_i, \gamma_j \in \mathcal{N}$ are such that $\gamma_i \neq \gamma_j$, then glue the segments $[0, \val(\gamma_j - \gamma_i)]$ of the intervals $I_i$ and 
$I_j$ by identifying the points having the same coordinate in $[0, \infty]$. This gluing process leads to a tree which is by definition the contact tree $T_\Kk(\mathcal{N})$. All the points of coordinate $0$ of the intervals $I_i$ get identified to a point $O \in T_\Kk(\mathcal{N})$, which is chosen as the root. As $\val(\gamma_j - \gamma_i) > 0$ whenever $i \neq j$, the root is of valency $1$. The set $\mathcal{L}(T_\Kk(\mathcal{N}))$ of leaves of $T_\Kk(\mathcal{N})$ is in canonical bijection with the set $\mathcal{N}$. We will identify them using this bijection:
 $$\mathcal{N} = \mathcal{L}(T_\Kk(\mathcal{N})).$$

\begin{example}
\label{ex:contactTree}
Consider the set $\mathcal{N}$ consisting of the following real Newton-Puiseux series: 
$\gamma_1 = -x$, $\gamma_2=x$, $\gamma_3=x+x^3$ and $\gamma_4=x+2x^3$. The corresponding intervals $I_i$ are drawn on the left of Figure \ref{fig:exContactTree}, the marked points being those whose coordinates are exponents of monomials appearing in $\gamma_i$.
The contact tree $T_\Kk(\mathcal{N})$ is drawn on the right. The monomial $x^r$ corresponding to a vertex is written between brackets as a decoration. The corresponding term $c_i x^r$ in each series $\gamma_i$ is written as a decoration of the edge going towards $\gamma_i$,  seen as a leaf of $T_\Kk(\mathcal{N})$. Note that in this example all Newton-Puiseux series have integral exponents. One could restrict to this situation throughout the paper by making a change of variable of the form $x = x_1^N$, for a value $N \in \Nn^*$ divisible by the denominators of all the exponents appearing in the complex Newton-Puiseux roots of $f(x,y)$.

\begin{figure}[H]
\small
\myfigure{1}{
	\begin{tikzpicture}[scale=0.8]

\begin{scope}[yshift=0cm]
\draw[thick] (0,0) -- (2,0)  -- (4,0);
\draw[->,>=latex,thick] (4,0) -- ++(2,0) node[right]{$\gamma_4 = x + 2x^3$};
\filldraw[ultra thick,fill=white] (0,0) ++ (-0.1,-0.1) rectangle ++(0.2,0.2);
\node [above] at (2,0){$[x]$};
\node[below, scale=0.8] at (2.4,0) {$x$};
\node [above] at (4,0){$[x^3]$};
\node[below, scale=0.8] at (4.4,0) {$2x^3$};
\path  (2,0) node[scale=4]{.};
\path  (4,0) node[scale=4]{.};
\end{scope}

\begin{scope}[yshift=-1.5cm]
\draw[thick] (0,0) -- (2,0)  -- (4,0);
\draw[->,>=latex,thick] (4,0) -- ++(2,0) node[right]{$\gamma_3 = x + x^3$};
\filldraw[ultra thick,fill=white] (0,0) ++ (-0.1,-0.1) rectangle ++(0.2,0.2);
\node [above] at (2,0){$[x]$};
\node[below, scale=0.8] at (2.4,0) {$x$};
\node [above] at (4,0){$[x^3]$};
\node[below, scale=0.8] at (4.4,0) {$x^3$};
\path  (2,0) node[scale=4]{.};
\path  (4,0) node[scale=4]{.};
\end{scope}

\begin{scope}[yshift=-3cm]
\draw[thick] (0,0) -- (2,0) ;
\draw[->,>=latex,thick] (2,0) -- ++(4,0) node[right]{$\gamma_2 = x$};
\filldraw[ultra thick,fill=white] (0,0) ++ (-0.1,-0.1) rectangle ++(0.2,0.2);
\node [above] at (2,0){$[x]$};
\node[below, scale=0.8] at (2.4,0) {$x$};

\path  (2,0) node[scale=4]{.};
\end{scope}

\begin{scope}[yshift=-4.5cm]
\draw[thick] (0,0) -- (2,0) ;
\draw[->,>=latex,thick] (2,0) -- ++(4,0) node[right]{$\gamma_1 = -x$};
\filldraw[ultra thick,fill=white] (0,0) ++ (-0.1,-0.1) rectangle ++(0.2,0.2);
\node [above] at (2,0){$[x]$};
\node[below, scale=0.8] at (2.4,0) {$- x$};

\path  (2,0) node[scale=4]{.};
\end{scope}
\end{tikzpicture}%
\qquad\qquad
	\begin{tikzpicture}[scale=1]
\path
  (0,0) coordinate(R)
  ++(2,0) coordinate(T)
  (T)
  +(3,-1.5) coordinate(V1)
  +(1,1.5) coordinate(V2)
  (V2)
  +(2,-1) coordinate(W1)  
  +(2,0) coordinate(W2)    
  +(2,1) coordinate(W3)  
;

\draw[thick]
  (R) -- (T)
  (T) -- (V2)
;
\draw[->,>=latex,thick] (T) -- (V1);
\draw[->,>=latex,thick] (V2) -- (W1);
\draw[->,>=latex,thick] (V2) -- (W2);
\draw[->,>=latex,thick] (V2) -- (W3);

\filldraw[ultra thick,fill=white] (R) ++ (-0.1,-0.1) rectangle ++(0.2,0.2);

\path (T) node[above left]{$[x]$};

\path (V2) node[above left]{$[x^3]$};
\foreach \v in {T, V2}{
  \path  (\v) node[scale=4]{.};
}
\path (V1) node[right]{$\gamma_1$};
\path (W1) node[right]{$\gamma_2$};
\path (W2) node[right]{$\gamma_3$};
\path (W3) node[right]{$\gamma_4$};

\path (T)--(V1) node[below,pos=0.3]{$- x$};

\path (T)--(V2) node[above,pos=0.3]{$x$};

\path (V2)--(W1) node[below,pos=0.3]{$0$};

\path (V2)--(W2) node[ above, pos=0.5]{$x^3$};

\path (V2)--(W3) node[above,pos=0.3]{$2x^3$};
\end{tikzpicture}%

}
\caption{Construction of a contact tree.}
\label{fig:exContactTree}
\end{figure}
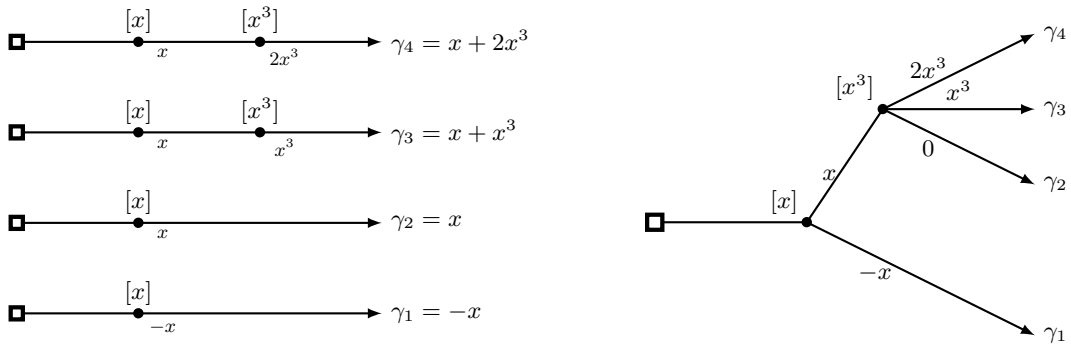
\end{example}

\medskip

Suppose now that the finite set $\mathcal{N}$ consists only of \emph{real} Newton-Puiseux series $\xi_i \in \Rr\{x^{\frac{1}{\Nn}}\}$. It acquires then a canonical total order, by restriction of the \emph{real total order} $\order_{\Rr}$ of Definition \ref{def:realtot}. Therefore, we also call it the \defi{real total order} on $\mathcal{N}$. It is planar relative to the rooted tree $T_\Rr(\mathcal{N})$:
\begin{proposition}
 \label{prop:realtotplan}
 Let $\mathcal{N}$ be a finite subset of $\Rr\{x^{\frac{1}{\Nn}}\}$.
 The real total order on $\mathcal{N}= \mathcal{L}(T_\Rr(\mathcal{N}))$ is planar relative to the tree $T_\Rr(\mathcal{N})$, in the sense of Definition \ref{def:plantotord}. 
\end{proposition}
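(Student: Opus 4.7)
The plan is to reduce the statement to the characterization of planar total orders given in Proposition \ref{prop:compatord}. So I need to verify that for any two incomparable vertices $P, Q$ of $T_\Rr(\mathcal{N})$, the leaves $\preceq_T$-greater than $P$ are either all $\order_\Rr$-smaller or all $\order_\Rr$-bigger than the leaves $\preceq_T$-greater than $Q$.

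First I would fix such $P$ and $Q$ and consider $R := P \wedge Q$, which is strictly $\preceq_T$-below both $P$ and $Q$ since they are incomparable. Let $\rho \in \Qq_{\geq 0}$ be the height of $R$ in $T_\Rr(\mathcal{N})$. The key structural observation (immediate from the construction of the contact tree by gluing intervals at common initial segments) is that a leaf $\xi \succeq_T P$ and a leaf $\eta \succeq_T Q$ reach the vertex $R$ via two \emph{distinct} outgoing edges at $R$, so they share the same terms of exponent $< \rho$ but differ in their coefficients of $x^\rho$. In particular, $\val(\eta - \xi) = \rho$ for all such $\xi, \eta$.

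Next I would fix reference descendants $\xi_P^0 \succeq_T P$ and $\xi_Q^0 \succeq_T Q$ and set $c_{P,Q} := \lc(\xi_Q^0 - \xi_P^0) \in \Rr^*$. For arbitrary descendants $\xi \succeq_T P$ and $\eta \succeq_T Q$, I would decompose
\[
\eta - \xi \;=\; (\eta - \xi_Q^0) \;+\; (\xi_Q^0 - \xi_P^0) \;+\; (\xi_P^0 - \xi).
\]
Because $\xi, \xi_P^0 \succeq_T P$, the greatest lower bound $\xi \wedge \xi_P^0$ lies $\succeq_T P$, so $\val(\xi_P^0 - \xi) \geq \mathrm{height}(P) > \rho$; symmetrically $\val(\eta - \xi_Q^0) > \rho$. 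The middle summand has valuation exactly $\rho$ and initial coefficient $c_{P,Q}$. Hence $\val(\eta - \xi) = \rho$ and $\lc(\eta - \xi) = c_{P,Q}$, independently of the choice of descendants.

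By Definition \ref{def:realtot}, the sign of $c_{P,Q}$ alone dictates whether $\xi \order_\Rr \eta$ or $\eta \order_\Rr \xi$ for all descendants $\xi$ of $P$ and $\eta$ of $Q$. This is precisely the condition of Proposition \ref{prop:compatord}, which then supplies the planar structure on $T_\Rr(\mathcal{N})$ inducing $\order_\Rr$ on $\mathcal{L}(T_\Rr(\mathcal{N})) = \mathcal{N}$. No part of the argument looks delicate; the only point where one must be careful is admitting the case where $P$ or $Q$ is itself a leaf, but the decomposition above handles that case too (with $\xi = \xi_P^0 = P$, or similarly for $Q$).
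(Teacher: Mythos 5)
Your proof is correct and follows the same essential idea as the paper's: the order of two leaves is dictated solely by the sign of the initial coefficient of their difference, which lives at the exponent of their wedge vertex, and this reduces the planarity criterion of Proposition \ref{prop:compatord} to a coefficient comparison at a single exponent. The paper states this observation for two leaves and asserts that the planarity condition then follows "easily"; your three-term decomposition around reference descendants $\xi_P^0,\xi_Q^0$ supplies the details the paper leaves to the reader, so this is the same route, just carried out in full.
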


\begin{proof}
Let $\xi_i$ and $\xi_j$ be two distinct elements of $\mathcal{N}$. By definition of the real total order, $\xi_i \order_{\Rr} \xi_j$ if and only if $\lc(\xi_j-\xi_i) > 0$ (see Subsection \ref{ssec:NPs}). Denote $k := \val(\xi_j-\xi_i) > 0$. Then $\xi_i = \xi + a_k x^k + hot$ and $\xi_j = \xi + b_k x^k + hot$, where $a_k \neq b_k$ and $\xi$ is a Newton-Puiseux polynomial of degree $< k$.   
Therefore $\lc(\xi_j-\xi_i) = b_k-a_k$, which shows that $\xi_i \order_{\Rr} \xi_j$ if and only if $a_k < b_k$. This implies easily the condition for planarity described in Proposition \ref{prop:compatord} (for more details, see \cite[Section 1.7.2]{sorea2018shapes}).
\end{proof}

A different version of real contact tree of a finite set of real Newton-Puiseux series was introduced in \cite[Section 6.3]{koike-parusinski} by Koike and Parusinski.

\subsection{The real and complex contact trees of right-reduced series}
\label{ssec:contreerrseries}
 
Let $f \in \Rr\{x,y\}$ be a right-reduced series. 
We will distinguish the real Newton-Puiseux roots of $f$ from the non-real ones. 
Therefore, relation (\ref{eq:WeierstrassPrepThm}) becomes:
\begin{equation}
\label{eq:xiAndEta}
     f(x,y) = u(x,y)\cdot \prod_{i=1}^n (y-\xi_i) \cdot\prod_{l=1}^m \big[(y-\eta_l)(y-\overline{\eta_l})\big],
\end{equation}
where:
\begin{itemize}
 \item $\xi_i \in \mathcal{R}_{\Rr}(f)$, $i=1,\ldots,n$, are the real roots of $f$,
which are pairwise distinct by the hypothesis that $f$ is right-reduced;
 \item $\eta_l, \overline{\eta_l} \in \mathcal{R}_{\Cc}(f)\setminus \mathcal{R}_{\Rr}(f)$, $l=1,\ldots,m$, are the non-real roots of $f$; they are not necessarily pairwise distinct.
\end{itemize} 
Recall that $u$ is a unit (i.e.{} $u(0,0) \neq 0$) and since $f\in \Rr\{x,y\}$ we have that $u\in \Rr\{x,y\}$.

Denote by $T_{\Cc}(f)$ the contact tree of the set of all the Newton-Puiseux roots (real or complex) of $f$. 
Similarly, denote by $T_{\Rr}(f)$ the contact tree of the set of real Newton-Puiseux roots of $f$, namely $T_{\Rr}(\xi_1,\ldots,\xi_n)$. 
Note that \emph{$T_{\Rr}(f)$ is a rooted sub-tree of $T_{\Cc}(f)$} and that $T_{\Rr}(f)$ is canonically planar, by choosing the real total order on its set of leaves (see Proposition \ref{prop:realtotplan}). We say that this planar structure is the \defi{real planar structure} of $T_{\Rr}(f)$. We will define in Section \ref{sec:theorem} below a second planar structure on it, the \emph{integrated planar structure}.

\section{Area series and their valuations}
\label{sec:injectivity}

Throughout this section, we assume that $f \in \Rr\{x,y\}$ is a right-reduced series and that its real Newton-Puiseux roots $\xi_i$ satisfy: $ \xi_1 \order_{\Rr} \xi_2 \order_{\Rr} \cdots \order_{\Rr} \xi_n$. 
Let $F_x(y) \in \Rr\{x,y\}$ be a primitive of $f$. Its \emph{area series} are the successive differences $F_x(\xi_{i+1}) - F_x(\xi_i)$. We compute their valuations and we prove that they may be described using a strictly increasing function on the real contact tree $T_{\Rr}(f)$, the \emph{integrated exponent function}. Then we deduce the valuations of the differences $F_x(\xi_j) - F_x(\xi_i)$ for $j - i \geq 2$, whenever a non-vanishing hypothesis is satisfied.

\subsection{Area series}
\label{ssec:areas}

Consider a Morse rectangle $[0, \epsilon] \times I$ of $F_x(y)$ (see Definition \ref{def:morsif}). The numerical series $F_{x_0}(\xi_i(x_0))$ converges for every $x_0 \in [0, \epsilon]$ and its sum is a critical value of the function $F_{x_0} : I \to \Rr$. For this reason, we say that $F_x(\xi_i)$ is a \defi{critical value series}. 

In order to compare two critical values 
$F_{x_0}(\xi_i(x_0))$ and $F_{x_0}(\xi_j(x_0))$ of $F_{x_0}$ when $x_0 \in (0, \epsilon]$, we will first evaluate the initial terms of the differences 
\begin{equation}
 \label{eq:areaseries}
 S_i := F_x(\xi_{i+1})-F_x(\xi_i) \ \in \Rr\{x^\frac{1}{\Nn}\}
\end{equation} 
of consecutive critical value series. For all $i=1,\ldots,n-1$, we have:
\begin{equation*}
\label{eq:Sl}
S_i = \int_{\xi_i}^{\xi_{i+1}} f(x,t) \, \dd t 
\end{equation*}
by the definition of $F_x(y)$. 
Therefore, $S_i(x_0)$ is the signed area of the region contained between the interval $[\xi_i(x_0), \xi_{i+1}(x_0)]$ of the $y$-axis and the graph of $y \mapsto f(x_0,y)$ (see Figure \ref{fig:areas}). For this reason, we say that $S_i$ is the \defi{$i$-th area series of $f$}. Notice that the signs of the areas $S_i(x_0)$ alternate when $x_0 \in (0, \epsilon]$, since the hypothesis of right-reducedness of $f$ implies that $y \mapsto f(x_0,y)$ has only simple roots $\xi_i(x_0)$. Hence the critical points of $y \mapsto F_{x_0}(y)$ alternate between local minima and local maxima.

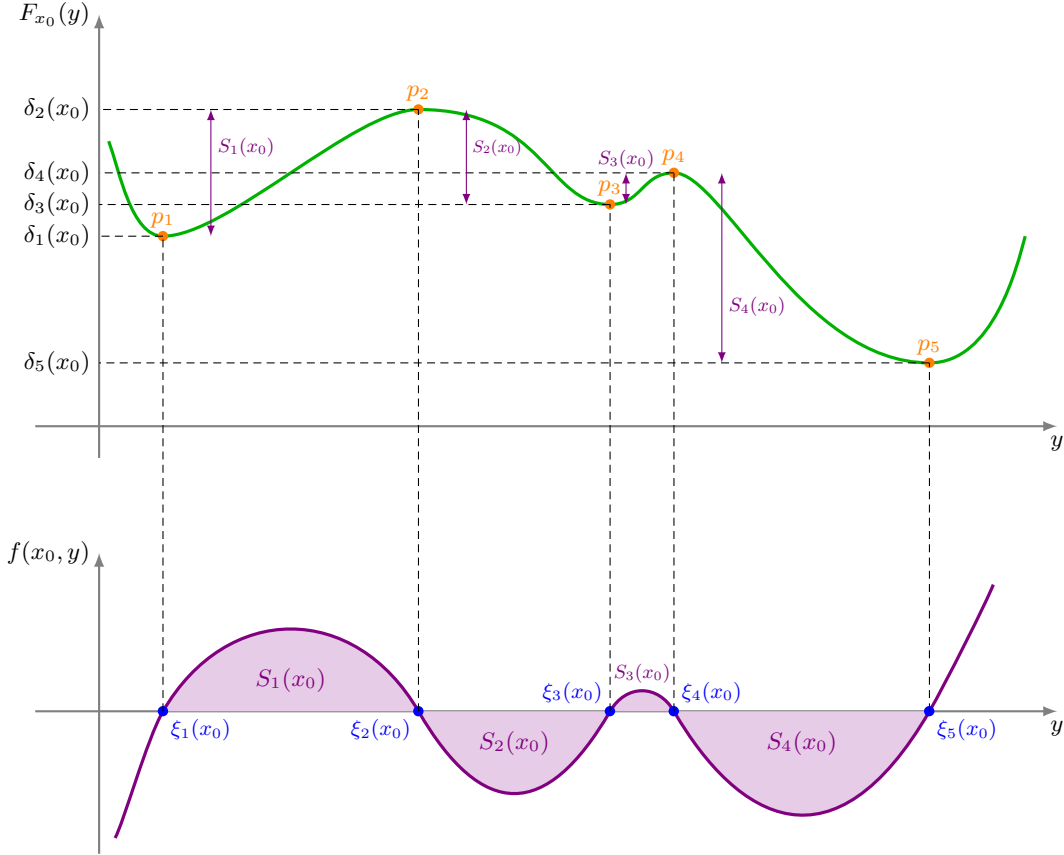
\begin{figure}[H]
\myfigure{0.7}{%
	\begin{tikzpicture}[scale=1.2]

\begin{scope}
	\draw[->,>=latex,thick, gray] (-1,0)--(15,0) node[below,black] {$y$};
	\draw[->,>=latex,thick, gray] (0,-2.25)--(0,2.5) node[left,black] {$f(x_0,y)$};

  \coordinate (p1) at (1,0);
  \coordinate (p2) at (5,0);
  \coordinate (p3) at (8,0);
  \coordinate (p4) at (9,0);
  \coordinate (p5) at (13,0);


\def\mygraph{
(0.25,-2) ..controls +(60:0.25) and +(-120:0.5)..  (p1) 
    ..controls +(60:2) and +(120:2).. (p2) 
     ..controls +(-60:2) and +(-120:2)..  (p3) 
       ..controls +(60:0.5) and +(120:0.5)..  (p4) 
       ..controls +(-60:2) and +(-120:3)..  (p5) 
      ..controls +(60:0.5) and +(-110:0.25).. (14,2)
}

\begin{scope}
  \clip (1,-4) rectangle (13,4);
  \fill[fill=violet!20]  (0,0) -- \mygraph -- (14,0)  -- cycle;
\end{scope}

\draw[very thick, violet]  \mygraph;

\foreach \i in {1,...,5}{
  \draw (p\i) node[scale=4, blue]{.};
}
\node[below right, blue, scale=0.9] at (p1) {$\xi_1(x_0)$};
\node[below left, blue, scale=0.9] at (p2) {$\xi_2(x_0)$};
\node[above left, blue, scale=0.9] at (p3) {$\xi_3(x_0)$};
\node[above right, blue, scale=0.9] at (p4) {$\xi_4(x_0)$};
\node[below right, blue, scale=0.9] at (p5) {$\xi_5(x_0)$};

\node[violet] at (3,0.5) {$S_1(x_0)$};
\node[violet] at (6.5,-0.5) {$S_2(x_0)$};
\node[violet,scale=0.8] at (8.5,0.6) {$S_3(x_0)$};
\node[violet] at (11,-0.5) {$S_4(x_0)$};

\end{scope}

\begin{scope}[yshift=4.5cm]
	\draw[->,>=latex,thick, gray] (-1,0)--(15,0) node[below,black] {$y$};
	\draw[->,>=latex,thick, gray] (0,-0.5)--(0,6.5) node[left,black] {$F_{x_0}(y)$};

  \coordinate (q1) at (1,3);
  \coordinate (q2) at (5,5);
  \coordinate (q3) at (8,3.5);
  \coordinate (q4) at (9,4);
  \coordinate (q5) at (13,1);


\def\mygraph{
(0.15,4.5) ..controls +(-70:0.5) and +(180:0.5)..  (q1) 
    ..controls +(0:1) and +(180:1).. (q2) 
     ..controls +(0:2) and +(180:1)..  (q3) 
       ..controls +(0:0.5) and +(180:0.5)..  (q4) 
       ..controls +(0:0.75) and +(180:2)..  (q5) 
      ..controls +(0:1) and +(-105:0.5).. (14.5,3)
}

\draw[very thick, green!70!black]  \mygraph;

\foreach \i in {1,...,5}{
  \draw (q\i) node[scale=4, orange]{.};
  \node[above, orange] at (q\i) {$p_\i$};
}

\foreach \i in {1,...,5}{
  \draw[densely dashed] (q\i) -- (q\i -| 0,0) node[left] {$\delta_{\i}(x_0)$};
}

\draw[<->,>=latex,violet] (1.75,3) -- ++(0,2) node[pos=0.7,right,scale=0.8]{$S_1(x_0)$};
\draw[<->,>=latex,violet] (5.75,5) -- ++(0,-1.5) node[pos=0.4,right,scale=0.7]{$S_2(x_0)$};
\draw[<->,>=latex,violet] (8.25,4) -- ++(0,-0.5) node[pos=0.1,above,scale=0.8]{$S_3(x_0)$};
\draw[<->,>=latex,violet] (9.75,1) -- ++(0,3) node[pos=0.3,right,scale=0.8]{$S_4(x_0)$};

\end{scope}

\foreach \i in {1,...,5}{
  \draw[densely dashed] (p\i) -- (q\i);
}

\end{tikzpicture}%
}
\caption{The graphs of $y \mapsto f(x_0,y)$ (below) and its primitive $y \mapsto F_{x_0}(y)$ (above).}
\label{fig:areas}
\end{figure}

In order to compare two critical values $F_{x_0}(\xi_i(x_0))$ and $F_{x_0}(\xi_j(x_0))$ when $j - i \geq 2$, we need to determine the sign of the initial coefficient of the difference:
\begin{equation}
\label{eq:sumSi}
F_x(\xi_j) - F_x(\xi_i) = S_i + S_{i+1} + \cdots + S_{j-1}. 
\end{equation}

Denote by $s_r x^{\sigma_r}$ the initial term of the area series $S_r$:
\begin{equation} 
\label{eq:initarea} 
     S_r = s_r x^{\sigma_r} + \hot.
\end{equation}

\subsection{The valuations of the area series}
\label{ssec:valarea}

Let $P$ be a point of $T_\Rr(f)$ or $T_\Cc(f)$. Its \defi{exponent} $E(P)$ is the exponent $k \in [0, \infty]$ of the monomial $x^k$ attached to $P$. In particular, if $P = \gamma_i \wedge \gamma_j$, where $\gamma_i, \gamma_j \in \mathcal{R}_{\Cc}(f)$ (see formula (\ref{eq:WeierstrassPrepThm})), then $E(P) = \val(\gamma_j-\gamma_i)$.
The following proposition generalizes \cite[Proposition 3.1]{sorea_portugaliae}, which concerned the case when $f$ had only real Newton-Puiseux roots with integer exponents, and that the associated real contact tree was binary.  It shows that the valuations of the area series $S_l$ may be computed combinatorially on the real contact tree $T_{\Rr}(f)$, using the embedding $T_{\Rr}(f) \subset T_{\Cc}(f)$:
\begin{proposition}
\label{prop:sigmai}
    Let $r \in \{1, \dots, n-1 \}$ and $S_r = F_x(\xi_{r+1}) - F_x(\xi_r) = s_r x^{\sigma_r} + \hot$, with $s_r \in \Rr^*$. Denote $P := \xi_r \wedge \xi_{r+1}$. Then: 
$$\sigma_r = E(P) + \sum_{\gamma\in \mathcal{R}_{\Cc}(f)} E(P \wedge \gamma).$$
\end{proposition}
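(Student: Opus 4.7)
The plan is to perform the change of variable $t = g(x,s) := \xi_r + (\xi_{r+1} - \xi_r) s$, which transforms the defining integral into
\[ S_r = (\xi_{r+1} - \xi_r) \cdot \int_0^1 f\bigl(x, g(x,s)\bigr)\,\dd s. \]
Since $\val(\xi_{r+1} - \xi_r) = E(P)$ with nonzero real leading coefficient, and $S_r \in \Rr\{x^{\frac{1}{\Nn}}\}$, it suffices to show that the integral has $x$-valuation exactly $v := \sum_{\gamma \in \mathcal{R}_\Cc(f)} E(P \wedge \gamma)$, with nonzero leading coefficient.

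I would then expand via the Weierstrass factorization \eqref{eq:xiAndEta}: viewed as a Newton-Puiseux series in $x$ with coefficients in $\Rr[s]$, the integrand $f(x, g(x,s))$ factors as the unit $u(x, g(x,s))$ (of $x$-valuation $0$) times the factors $g(x,s) - \gamma$ for $\gamma \in \mathcal{R}_\Cc(f)$. For $\gamma \in \{\xi_r, \xi_{r+1}\}$ these are $(\xi_{r+1} - \xi_r)s$ and $(\xi_{r+1} - \xi_r)(s-1)$, each of $x$-valuation $E(P) = E(P \wedge \xi_r) = E(P \wedge \xi_{r+1})$. For every other $\gamma$, I claim that $(\xi_r - \gamma) + (\xi_{r+1} - \xi_r)s$ has $x$-valuation exactly $E(P \wedge \gamma)$: when $E(\xi_r \wedge \gamma) < E(P)$, meaning that $\gamma$ branches off strictly below $P$ in $T_\Cc(f)$ and so $P \wedge \gamma = \xi_r \wedge \gamma$, the dominant term comes from $\xi_r - \gamma$; otherwise $\gamma$ lies weakly above $P$, so $P \wedge \gamma = P$ and the dominant contribution comes from $(\xi_{r+1} - \xi_r)s$, possibly combined with the leading term of $\xi_r - \gamma$, the resulting leading coefficient in $s$ being a nontrivial polynomial of degree at most $1$. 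Summing these contributions, using that $u(0,0) \neq 0$ contributes $0$ to the $x$-valuation, I obtain
\[ f\bigl(x, g(x,s)\bigr) = \Phi(s)\cdot x^v + R(x,s), \]
where $\Phi(s) \in \Rr[s]$ is the product of the leading coefficients in $s$ of all these factors and $R(x,s)$ has $x$-valuation strictly greater than $v$, uniformly for $s \in [0,1]$.

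The main obstacle I foresee is to prove that $\int_0^1 \Phi(s)\,\dd s \neq 0$, which is what ensures the valuation of the integral equals $v$ rather than being strictly larger. I plan to argue geometrically: by the right-reducedness of $f$ and the basic interval property of $\{\xi_r, \xi_{r+1}\}$ in $(\mathcal{R}_\Rr(f), \order_\Rr)$, for all $x_0 > 0$ small enough the real function $t \mapsto f(x_0, t)$ has consecutive simple zeros at $\xi_r(x_0) < \xi_{r+1}(x_0)$ and hence has constant sign on $(\xi_r(x_0), \xi_{r+1}(x_0))$. The uniform asymptotic $f(x_0, g(x_0, s)) = \Phi(s)\,x_0^v + O(x_0^{v'})$ with $v' > v$ then forces $\Phi(s)$ to be of constant sign on $(0,1)$; and since $\Phi$ is a product of nonzero polynomials in $s$ it is not identically zero, so its integral on $(0,1)$ is nonzero. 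An alternative, more combinatorial route would factor $\Phi(s) = C \cdot s(s-1) \cdot \prod_\gamma L_\gamma(s)$ with $C \in \Rr^*$ and verify directly, using the planar structure of $T_\Rr(f)$ at $P$, that each real factor $L_{\xi_i}(s)$ and each conjugate-pair product $L_\eta(s) L_{\overline{\eta}}(s) = |L_\eta(s)|^2$ has constant sign on $[0,1]$. Multiplying by $\xi_{r+1} - \xi_r$ then yields $S_r = s_r x^{E(P) + v} + \hot$ with $s_r \in \Rr^*$, and hence $\sigma_r = E(P) + \sum_{\gamma \in \mathcal{R}_\Cc(f)} E(P \wedge \gamma)$.
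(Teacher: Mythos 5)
Your proposal is correct and follows essentially the same route as the paper: the same affine change of variable $t = \xi_r + s(\xi_{r+1} - \xi_r)$, the same use of the Weierstrass factorization \eqref{eq:xiAndEta}, and the same identification of each factor's $x$-valuation with $E(P\wedge\gamma)$ (the unit contributing $0$, the Jacobian contributing $E(P)$). The one genuine difference lies in how one argues $\int_0^1 \Phi(s)\,\dd s \neq 0$. The paper takes what you call the ``combinatorial route'': it asserts that the initial coefficient of each factor is a polynomial in $s$ of strict constant sign on $(0,1)$, so the product of these initial coefficients integrates to something nonzero. Your primary argument is more global and arguably cleaner: right-reducedness together with the consecutiveness of $\xi_r,\xi_{r+1}$ in $\order_\Rr$ forces $f(x_0,\cdot)$ to have constant sign on $(\xi_r(x_0),\xi_{r+1}(x_0))$, and the uniform asymptotic (uniformity holding on the compact parameter set $[0,1]$ by convergence of the Newton-Puiseux series involved) transports this sign to $\Phi$, which is a nonzero polynomial; hence its integral is nonzero. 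This geometric route in fact bypasses a small subtlety in the factor-wise argument: if the coefficient of $x^{E(P)}$ in some non-real root $\eta_l$ happens to be a real number lying strictly between the corresponding coefficients of $\xi_r$ and $\xi_{r+1}$, then the leading coefficient of the modulus-squared factor $|g(x,s)-\eta_l|^2$ vanishes at one interior point of $(0,1)$, so sign-constancy of that factor is only $\geq 0$ rather than $>0$. This is still enough for the conclusion (a nonzero polynomial of constant sign, allowing isolated zeros, has nonzero integral), but your geometric argument simply never has to notice it. Both your primary route and the combinatorial alternative you sketch (the paper's choice) are valid.
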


\begin{proof}
By formula (\ref{eq:xiAndEta}) 
we have:  
$$S_r 
= F_x(\xi_{r+1}) - F_x(\xi_{r}) 
= \int_{\xi_{r}}^{\xi_{r+1}} f(x,t) \, \dd t 
= \int_{\xi_{r}}^{\xi_{r+1}} u(x,t)\cdot \prod_{i=1}^n (t-\xi_i) \cdot \prod_{l=1}^m |t-\eta_l|^2 \, \dd t. 
$$
Let us make the change of variables $t = \xi_r + \tau(\xi_{r+1}-\xi_r)$, with $\tau \in [0,1].$ This yields:
\begin{equation}
\label{eq:afterChOfVar}
S_l = \int_{0}^{1} 
u(x,\xi_r + \tau(\xi_{r+1}-\xi_r)) \cdot 
\prod_{i=1}^n (\xi_r + \tau(\xi_{r+1}-\xi_r)-\xi_i) \cdot 
\prod_{l=1}^m \vert \xi_r + \tau (\xi_{r+1}-\xi_r)-\eta_l \vert ^2 \cdot (\xi_{r+1}-\xi_r) \,\dd \tau.
\end{equation}

In order to compute $\val(S_r)$, let us first focus on a factor from \eqref{eq:afterChOfVar} of the form $\xi_r + \tau (\xi_{r+1}-\xi_r)-\xi_i$: 
\begin{itemize}
 \item If $i < r$ or $i > r$, then
 $\xi_r + \tau (\xi_{r+1}-\xi_r)-\xi_i=\xi_r -\xi_i + \tau (\xi_{r+1}-\xi_r)= c_{i,r} x^{e_i} + \hot$
 where $c_{i,r} \neq 0$ and $e_i = \min \{ E(\xi_{r} \wedge \xi_i) , E(\xi_{r} \wedge \xi_{r +1})\}
 = E( \xi_{r} \wedge \xi_{r +1} \wedge \xi_i)$.
 
 \item If $i = r$, we simply have $\xi_r + \tau (\xi_{r+1}-\xi_r)-\xi_i= \tau(\xi_{r+1}-\xi_r)= c_{i,r} x^{e_i} + \hot$ with $e_i = E( \xi_{r} \wedge \xi_{r+1})$. Notice that this is also the valuation of the term $\xi_{r+1}-\xi_r$ coming from the change of variables.
\end{itemize}

Let us focus on a factor of (\ref{eq:afterChOfVar}) of the form $\vert \xi_r + \tau(\xi_{r+1}-\xi_r)-\eta_l\vert ^2$. 
Its valuation, computed in a similar way, is $2 E( \xi_{r} \wedge \xi_{r +1} \wedge \eta_l)$; it is also equal to 
$E(\xi_{r} \wedge \xi_{r +1} \wedge \eta_l) + E(\xi_{r} \wedge \xi_{r +1} \wedge \bar\eta_l)$.

Note that the initial coefficient of each factor of (\ref{eq:afterChOfVar}) is a polynomial function in the variable $\tau$ which is either $> 0$ or $<0$ when $\tau \in (0, 1)$. This shows that the integral on $[0, 1]$ of the initial coefficient of the product is non-zero. Therefore, by an argument similar to that explained in Subsection \ref{ssec:meaninginj}, the valuation of $S_r$ is the sum of the valuations of the factors of \eqref{eq:afterChOfVar} (as $u$ is a unit, its valuation is $0$):
$$\sigma_r 
= \val(S_r)
= \sum_{i=1}^n E(P \wedge \xi_i) \ + \ \sum_{l=1}^m 2E(P \wedge \eta_l) \ + E(P)
= E(P) + \sum_{\gamma \in \mathcal{R}_\Cc(f)} E(P \wedge \gamma),
$$
where $P := \xi_r \wedge \xi_{r+1}$.
\end{proof}

Proposition \ref{prop:sigmai} motivates the following definition, which will play an important role in the statement of Theorem B:
\begin{definition}
 \label{def:intexp}
    The \defi{integrated exponent function} $\sigma: T_{\Rr}(f) \mapsto [0, \infty]$ is defined by:
     $$ \sigma(P) := E(P) + \sum_{\gamma\in \mathcal{R}_{\Cc}(f)} E(P \wedge \gamma)$$
      for every $P \in T_{\Rr}(f)$.
\end{definition}

Note that the integrated exponent function $\sigma$ does not only depend on the pair $(T_{\Rr}(f), E)$, but also on the embedding $T_{\Rr}(f) \subset T_{\Cc}(f)$ and on the multiplicities of the non-real Newton-Puiseux roots of $f$. This is understandable, given that $F_x(y)$ is determined by integration of $f(x,y)$, whose expression (\ref{eq:xiAndEta}) depends both on its real and its non-real Newton-Puiseux roots. 

The following basic property of the integrated exponent function will be an essential ingredient of the proof of Lemma \ref{lem:diffcrit}:
\begin{lemma}
 \label{lem:incareaexp}
      The integrated exponent function is strictly increasing on the poset $(T_{\Rr}(f), \preceq_{T_{\Rr}(f)})$. That is, if $P, Q \in T_\Rr(f)$, then: 
        \[ P \prec_{T_\Rr(f)} Q \implies \sigma(P) < \sigma(Q). \]
\end{lemma}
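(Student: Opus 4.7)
The plan is to deduce the strict inequality $\sigma(P) < \sigma(Q)$ from two simple monotonicity facts about the contact tree $T_\Cc(f)$.

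First, I would observe that the exponent function $E$ is \emph{strictly} increasing along the partial order $\preceq$ of $T_\Cc(f)$: this is immediate from the gluing construction of the contact tree in Subsection \ref{ssec:contree}, since each segment from the root to a leaf $\gamma_i$ is a copy of $[0,\infty]$ in which $E$ is the coordinate. In particular, as $T_\Rr(f)$ is a rooted subtree of $T_\Cc(f)$, the hypothesis $P \prec_{T_\Rr(f)} Q$ yields $P \prec_{T_\Cc(f)} Q$ and hence $E(P) < E(Q)$.

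Second, I would establish the monotonicity of the meet operation on $T_\Cc(f)$: for every point $\gamma \in T_\Cc(f)$,
\[ P \preceq Q \ \Longrightarrow \ P \wedge \gamma \preceq Q \wedge \gamma. \]
Indeed, from the defining identity $[O, R \wedge \gamma] = [O, R] \cap [O, \gamma]$ and the inclusion $[O,P] \subseteq [O,Q]$, one gets $[O, P \wedge \gamma] \subseteq [O, Q \wedge \gamma]$, which is exactly the required inequality. Combined with the first monotonicity, this gives $E(P \wedge \gamma) \le E(Q \wedge \gamma)$ for every $\gamma \in \mathcal{R}_\Cc(f)$ (viewed as a leaf of $T_\Cc(f)$, counted with its multiplicity as in formula (\ref{eq:xiAndEta})).

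Finally, I would add the strict inequality $E(P) < E(Q)$ to the family of non-strict inequalities $E(P \wedge \gamma) \le E(Q \wedge \gamma)$ indexed by $\gamma \in \mathcal{R}_\Cc(f)$. Summing term by term in the definition of $\sigma$ given in Definition \ref{def:intexp} yields $\sigma(P) < \sigma(Q)$, as desired. I do not anticipate a substantive obstacle: the only point that needs care is the verification of monotonicity of the meet, but this is a purely tree-theoretic fact independent of the analytic content of the paper.
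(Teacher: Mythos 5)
Your argument is correct and follows essentially the same route as the paper's proof: deduce $E(P) < E(Q)$, deduce $E(P \wedge \gamma) \le E(Q \wedge \gamma)$ for each $\gamma \in \mathcal{R}_\Cc(f)$ from monotonicity of the meet, and sum. The only difference is that you spell out the tree-theoretic justifications (that $E$ is strictly increasing along segments, and that $R \mapsto R \wedge \gamma$ is order-preserving via $[O, R \wedge \gamma] = [O,R] \cap [O,\gamma]$), which the paper asserts without elaboration.
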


\begin{proof}
 Assume that $P \prec_{T_\Rr(f)} Q$. This implies that $E(P) < E(Q)$ and that $P \wedge \gamma \preceq_{T_\Rr(f)} Q \wedge \gamma$, therefore $E(P \wedge \gamma) \leq E(Q \wedge \gamma)$ for all $\gamma \in \mathcal{R}_{\Cc}(f)$. By adding all these inequalities, we get:
 $$E(P) + \sum_{\gamma\in \mathcal{R}_{\Cc}(f)} E(P \wedge \gamma) < E(Q) + \sum_{\gamma\in \mathcal{R}_{\Cc}(f)} E(Q \wedge \gamma). $$
 This means exactly that $\sigma(P) < \sigma(Q).$
\end{proof}

\subsection{Signs of differences of critical values}
\label{ssec:signsdiff}

Let $P$ be an internal vertex of $T_{\Rr}(f)$ and let $(e_1,e_2,\ldots,e_p)$ be the strictly increasing sequence of outgoing edges of $P$ (see Figure \ref{fig:outGoingEdgesAndAreas}). 

\begin{figure}[H]
\myfigure{0.75}{%
	\begin{tikzpicture}[scale=1]
\path
  (0,0) coordinate(O)
  ++(2,0) coordinate(A)
  ++(2,0) coordinate(P)
  (P)   
  +(-30:4) coordinate(Q1)
  +(30:4) coordinate(Q2)
  +(-10:4) coordinate(Q3)
   +(10:4) coordinate(Q4)
  +(-50:4) coordinate(Q5)
   +(50:4) coordinate(Q6)
  (Q4) + (2.6,0) coordinate (R1)
  (Q2) + (3,0) coordinate (R2)
;

\fill[red!5] (P) -- +(10:3.5)  arc(10:30:3.5) -- cycle;

\draw[thick, dashed] (O) -- (A);
\draw[thick, dashed] (Q4) -- (R1);
\draw[thick, dashed] (Q2) --  (R2);

\foreach \i in{1,3,5,6}{
  \draw[thick, dashed] (Q\i) -- ++ (2,0);
}
\draw[thick]
  (A) -- (P)
  (P) -- (Q1)
  (P) -- (Q2)
  (P) -- (Q3)
  (P) -- (Q4)
;
\draw[]
  (P) -- (Q5)
  (P) -- (Q6)
;

\foreach \v in {P, Q4, Q2}{
  \path  (\v) node[scale=4]{.};
}

\path (P) node[above left]{$P$};
\path (R1) node[right]{$\xi_{\iota(r)}$};
\path (R2) node[right]{$\xi_{\iota(r)+1}$};
\path (P) ++(-50:2.5) node[above]{$e_1$};
\path (P) ++(-30:2.5) node[above]{$e_2$};
\path (P) ++(-18:2) node[scale=0.9]{$\vdots$};
\path (P)-- ++(52:2.8) node[above]{$e_p$};
\path (P)-- ++(10:2.5) node[above]{$e_{r}$};
\path (P)-- ++(30:2.5) node[above]{$e_{r+1}$};
\path (P) ++(44:2) node[scale=0.9]{$\vdots$};

\path (P)-- ++(20:4) node[]{$S_{\iota(r)}$};

\end{tikzpicture}%
}
\caption{The outgoing edges $e_l$ at the internal vertex $P$ and a basic interval of their totally ordered set.}
\label{fig:outGoingEdgesAndAreas}
\end{figure}
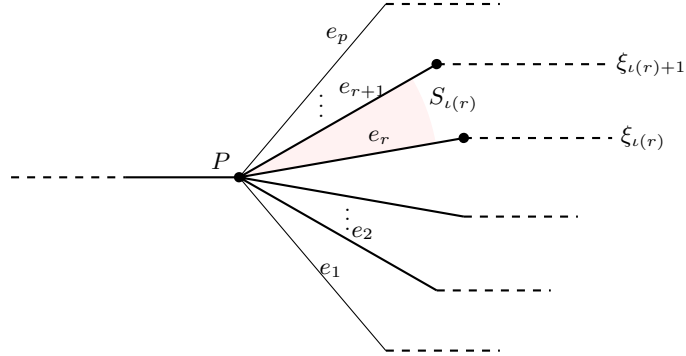

Take $\{e_r,e_{r+1}\} \in BI(\mathcal{E}^+_{T_{\Rr}(f)}(P))$. By Proposition \ref{prop:wedgemap}, there exists a unique basic interval $\{\xi_{\iota(r)},\xi_{\iota(r)+1}\} \in BI(\mathcal{L}(T_{\Rr}(f))) = BI(\mathcal{R}_{\Rr}(f))$, such that $W( \{ \xi_{\iota(r)},\xi_{\iota(r)+1} \}) = \{ e_r,e_{r+1} \}$.
In this way we can associate to the basic interval $ \{ e_r,e_{r+1} \}$ the area function $S_{\iota(r)}$
and also its initial coefficient $s_{\iota(r)} = \lc(S_{\iota(r)})$ (see Equation (\ref{eq:initarea})).

Denote by 
   \[  \sgn : \Rr^* \to \{-1, +1\} \] 
the \defi{sign function}. The following lemma enables to determine the signs of the differences $F_x(\xi_j(x_0)) - F_x(\xi_i(x_0))$ appearing in Formula (\ref{eq:sumSi}) via the knowledge of certain sums of initial coefficients $s_r$ of the area series $S_r$:

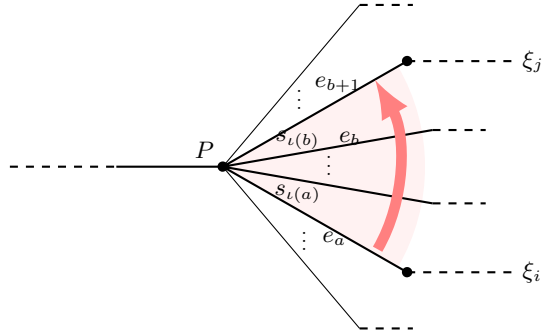
\begin{figure}[H]
\myfigure{0.7}{%
	\begin{tikzpicture}[scale=1]
\path
  (0,0) coordinate(O)
  ++(2,0) coordinate(A)
  ++(2,0) coordinate(P)
  (P)   
  +(-30:4) coordinate(Q1)
  +(30:4) coordinate(Q2)
  +(-10:4) coordinate(Q3)
   +(10:4) coordinate(Q4)
  +(-50:4) coordinate(Q5)
   +(50:4) coordinate(Q6)
  (Q1) + (2,0) coordinate (R1)
  (Q2) + (2,0) coordinate (R2)
;

\fill[red!5] (P) -- +(-30:3.8)  arc(-30:30:3.8) -- cycle;

\draw[thick, dashed]
  (O) -- (A)
;

\foreach \i in{1,2}{
  \draw[thick, dashed] (Q\i) -- ++ (2,0);
}
\foreach \i in{3,...,6}{
  \draw[thick, dashed] (Q\i) -- ++ (1,0);
}

\draw[thick]
  (A) -- (P)
  (P) -- (Q1)
  (P) -- (Q2)
  (P) -- (Q3)
  (P) -- (Q4)
;
\draw[]
  (P) -- (Q5)
  (P) -- (Q6)
;

\foreach \v in {P, Q1, Q2}{
  \path  (\v) node[scale=4]{.};
}

\path (P) node[above left]{$P$};
\path (R1) node[right]{$\xi_i$};
\path (R2) node[right]{$\xi_j$};
\path (P)-- ++(-20:1.5) node[]{$s_{\iota(a)}$};
\path (P)-- ++(-33:2.5) node[]{$e_{a}$};
\path (P)-- ++(20:1.5) node[]{$s_{\iota(b)}$};
\path (P)-- ++(30:2.5) node[above]{$e_{b+1}$};
\path (P)-- ++(6:2.4) node[above]{$e_{b}$};
\path (P)-- ++(4:2) node[scale=0.8]{$\vdots$};
\path (P)-- ++(-40:2) node[scale=0.8]{$\vdots$};
\path (P)-- ++(44:2) node[scale=0.8]{$\vdots$};

\draw[->,>=latex,red!50, line width=4pt, shorten <=2pt] ($(P) + (-30:3.3)$) arc (-30:30:3.3);

\end{tikzpicture}%
}
\caption{Outgoing edges between two leaves.}
\label{fig:ea}
\end{figure}

\begin{lemma}
\label{lem:diffcrit} 
 Assume that $f$ is right-reduced. Let $\xi_i \order_{\Rr} \xi_j$ be two real roots of $f$. Consider the vertex $P := \xi_i \wedge \xi_j$ of the real contact tree $T_{\Rr}(f)$. Let $s_{\iota(a)}$, $s_{\iota(a+1)}$, \ldots, $s_{\iota(b)}$ be the initial coefficients associated to the outgoing edges from $P$, in between the edges going to the leaves $\xi_i$ and $\xi_j$ (see Figure \ref{fig:ea}). If $s_{\iota(a)} + s_{\iota(a+1)} + \ \cdots \ + s_{\iota(b)} \neq 0$, then there exists $\epsilon > 0$ such that:
$$\sgn\big( F_{x_0}(\xi_j(x_0)) - F_{x_0}(\xi_i(x_0)) \big)
= \sgn\big( s_{\iota(a)} + s_{\iota(a+1)} + \ \cdots \ + s_{\iota(b)} \big)$$
for every $x_0 \in (0, \epsilon]$.
\end{lemma}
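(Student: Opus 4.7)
The plan is to compute the valuation of the sum
\[ F_x(\xi_j) - F_x(\xi_i) = S_i + S_{i+1} + \cdots + S_{j-1} \]
using Proposition \ref{prop:sigmai} (which gives $\val(S_r) = \sigma(\xi_r \wedge \xi_{r+1})$), and then to read off the coefficient of the lowest surviving exponent.

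First, I would show that for every $r$ with $i \le r \le j-1$, the vertex $Q_r := \xi_r \wedge \xi_{r+1}$ satisfies $P \preceq_{T_\Rr(f)} Q_r$. This amounts to the observation that the set of leaves above $P$ forms a contiguous interval in the real total order $\order_\Rr$: if a leaf $\xi_k$ with $\xi_i \order_\Rr \xi_k \order_\Rr \xi_j$ failed to satisfy $P \preceq_{T_\Rr(f)} \xi_k$, then setting $P' := P \wedge \xi_k$ would give $P' \prec_{T_\Rr(f)} P$, with $\xi_i, \xi_j$ descending from one outgoing edge at $P'$ while $\xi_k$ descends from another, contradicting the planarity of $\order_\Rr$ relative to $T_\Rr(f)$ (Proposition \ref{prop:realtotplan}). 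By the monotonicity of the integrated exponent function (Lemma \ref{lem:incareaexp}), this yields $\val(S_r) = \sigma(Q_r) \ge \sigma(P)$, with equality precisely when $Q_r = P$, i.e., when $\xi_r$ and $\xi_{r+1}$ descend from distinct outgoing edges at $P$.

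Next, I would identify which $r \in \{i, \ldots, j-1\}$ satisfy $Q_r = P$. By the wedge map $W$ (Proposition \ref{prop:wedgemap}), the basic intervals $\{\xi_r, \xi_{r+1}\}$ of leaves with $Q_r = P$ are in bijection with basic intervals of outgoing edges at $P$ whose two edges contain $\xi_r$ and $\xi_{r+1}$ respectively as descendants. Among $i \le r \le j-1$, the outgoing edges of $P$ involved are exactly $e_a, e_{a+1}, \ldots, e_{b+1}$ (where $e_a$ contains $\xi_i$ and $e_{b+1}$ contains $\xi_j$), giving the basic intervals $\{e_r, e_{r+1}\}$ for $a \le r \le b$. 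Under $W^{-1}$ these correspond to the pairs $\{\xi_{\iota(r)}, \xi_{\iota(r)+1}\}$ of the statement, and one checks that $i \le \iota(r) \le j-1$ by using that $\xi_{\iota(r)}$ (resp.\ $\xi_{\iota(r)+1}$) is the largest (resp.\ smallest) descendant of $e_r$ (resp.\ $e_{r+1}$) in the $\order_\Rr$ order.

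Combining the two steps, all terms of valuation exactly $\sigma(P)$ in $S_i + \cdots + S_{j-1}$ come from the indices $\iota(a), \ldots, \iota(b)$, while every remaining summand has valuation strictly greater than $\sigma(P)$. Therefore
\[ F_x(\xi_j) - F_x(\xi_i) = \left( \sum_{r=a}^{b} s_{\iota(r)} \right) x^{\sigma(P)} + \hot, \]
and the non-vanishing hypothesis promotes the displayed coefficient to the genuine initial coefficient of the series. For $x_0 > 0$ small enough, the sign of a real Newton-Puiseux series evaluated at $x_0$ equals the sign of its initial coefficient, which yields the asserted identity of signs. I expect the main subtlety to be the planarity argument that places every $\xi_r$ with $i \le r \le j$ in the subtree above $P$, together with the careful wedge-map bookkeeping matching the $\iota(r)$ to the range $\{i, \ldots, j-1\}$; the analytic part reduces to a single valuation computation once this combinatorial setup is in place.
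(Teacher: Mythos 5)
Your argument is correct and follows essentially the same route as the paper's proof: split $S_i+\cdots+S_{j-1}$ according to whether $\xi_r\wedge\xi_{r+1}$ equals $P$ or strictly dominates it, use Proposition~\ref{prop:sigmai} and Lemma~\ref{lem:incareaexp} to see that the latter terms have strictly larger valuation, and identify the former with the $s_{\iota(a)},\ldots,s_{\iota(b)}$ via the wedge map. The paper's version is more terse — it leaves implicit both the fact that every $Q_r$ with $i\le r<j$ lies above $P$ and the precise matching of indices to $\{\iota(a),\ldots,\iota(b)\}$ — whereas you spell out the planarity argument (Proposition~\ref{prop:realtotplan} forcing the leaves above $P$ to form an $\order_\Rr$-interval) and the wedge-map bookkeeping; these are exactly the details the paper's proof tacitly relies on, so your writeup is a faithful and slightly more complete rendering of the same idea.
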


\begin{proof}
The statement is equivalent to the fact that the sign of the initial coefficient of $F_x(\xi_j) - F_x(\xi_i)$ is equal to $\sgn\big( s_{\iota(a)} + s_{\iota(a+1)} + \ \cdots \ + s_{\iota(b)} \big)$.
In order to prove this property, we use the fact that for every $r \in \{1, \dots, n-1\}$, the valuation $\sigma_r$ of $S_r$ only depends on the vertex $P := \xi_r \wedge \xi_{r+1}$ (see Proposition \ref{prop:sigmai}) and that it is a strictly increasing function on the real contact tree (see Lemma \ref{lem:incareaexp}). Therefore:
\begin{align*}
 F_x(\xi_j) - F_x(\xi_i)
 & = S_i + S_{i+1} + \cdots + S_{j-1} \\
 &= \sum_{\substack{i \leq r < j \\ 
 P = \xi_r \wedge \xi_{r+1}}} S_{r} + 
 \sum_{\substack{i \leq r < j \\ 
 P \prec_{T_{\Rr}(f)} \xi_r \wedge \xi_{r+1}}} S_{r} \\
 &= \big( s_{\iota(a)} + s_{\iota(a+1)} + \cdots + s_{\iota(b)}\big) x^{\sigma(P)} \ + \hot .
\end{align*}
One concludes using the non-vanishing hypothesis $s_{\iota(a)} + s_{\iota(a+1)} + \ \cdots \ + s_{\iota(b)} \neq 0$.
\end{proof}

\section{The injectivity condition and the combinatorial types of morsifications}
\label{sec:theorem}

We start this section by defining the \emph{injectivity condition} on real contact trees of right-reduced series, which is a crucial hypothesis for our main Theorems A and B. Then we define a second planar structure on those contact trees under the hypothesis that the injectivity condition is satisfied: the \emph{integrated planar structure}. Finally, we state and prove Theorem A, which describes the combinatorial types of morsifications whenever the injectivity condition is satisfied.

\subsection{The injectivity condition}
\label{ssec:injCond}

We explained in Subsection \ref{ssec:meaninginj} how our valuation-theoretical approach leads naturally to the \emph{injectivity condition}. In the present subsection we formulate it in a way which explains its name. The equivalence of this formulation and that of Subsection \ref{ssec:meaninginj} results from Lemma \ref{lem:diffcrit}.

We keep the notations $\iota(k)$ introduced in Subsection \ref{ssec:signsdiff}.
The \emph{injectivity condition} on $f$ will involve all the sums of initial coefficients $s_{\iota(k)}$ taken on consecutive basic intervals of $\mathcal{E}^+_{T_{\Rr}(f)}(P)$: we impose that all these sums are non-zero. 
This may be also expressed as the condition that the following \defi{discrete integration map} at $P$ is injective:

\begin{equation}
\label{eq:dim}
\begin{array}{clcl}
{\smallint}_P : & \{0,1,\ldots,p-1\}& \longrightarrow & \Rr \\
& 0 & \longmapsto & 0 \\
& \cdots & & \\
& i & \longmapsto & s_{\iota(1)} + s_{\iota(2)} + \cdots + s_{\iota(i)} \\
& \cdots & & \\
& p-1 & \longmapsto & s_{\iota(1)} + s_{\iota(2)} + \ \cdots \ + s_{\iota(p-1)} \\
\end{array}
\end{equation}

\begin{definition} 
 \label{def:injcond}
 The \defi{injectivity condition} on the right-reduced series $f \in \Rr\{x,y\}$ is:
\begin{equation}
\emph{\text{For each internal vertex $P$, the discrete integration map ${\smallint}_P$ is injective.}}
\label{Inj}
\tag{$\mathcal{I}nj$}
\end{equation}
\end{definition}

The injectivity condition is equivalent to:
$$ \mbox{\em For each internal vertex } P, \mbox{\em each partial sum } s_{\iota(a)} + s_{\iota(a+1)} + \ \cdots \ + s_{\iota(b)} 
 \mbox{ \em of consecutive terms is non-zero.} $$

\begin{example}
\label{ex:bintree}
The injectivity condition is automatically satisfied when the rooted tree $T_{\Rr}(f)$ is {\em binary}, that is, when all its internal vertices have valency $3$ (as in \cite{sorea_portugaliae}). 
\end{example}

\begin{example}
\label{ex:symtree}
The injectivity condition is never satisfied for series $f$ which are odd in the variable $y$ (that is, such that $f(x, -y) = - f(x,y)$) and verify $\mathrm{ord}_y(f(0, y)) \geq 3$. Indeed, in this case $F(x, -y) = F(x,y)$, which implies that $F_{x_0}: [-h, h] \to \Rr$ is even and has at least three critical points for every Morse rectangle $[0, \epsilon] \times [-h, h]$ of $F_x(y)$ and every $x_0 \in (0, \epsilon]$. Therefore, $F_{x_0}$ is not Morse.  For instance, the injectivity condition is not satisfied if $f(x,y) = y(y^2 - x^3)$.
\end{example}

\begin{example} 
\label{ex:notnec}
The injectivity condition is not necessary for $F_x(y)$ to be a morsification. Consider for instance $f(x,y) := (y+x)(y+x^2)y(y-x^2)(y-x-cx^2)$. The corresponding five Newton-Puiseux roots $\xi_i$ are all real, with $(\xi_1, \ldots, \xi_5) = (- x, - x^2, 0, x^2, x + c x^2)$.
Elementary computations prove that for all sufficiently small $c>0$, and for sufficiently small $x_0>0$, $y \mapsto F_{x_0}(y)$ is a Morse function. However $f$ \emph{does not satisfy} the injectivity condition since:
$$
S_1 = \frac{1}{12}x^6 + \hot, \qquad
S_2 = -\frac{1}{4}x^{10} + \hot, \qquad
S_3 = \frac{1}{4}x^{10} + \hot, \qquad
S_4 = -\frac{1}{12}x^6 + \hot,$$
the areas $S_1$ and $S_4$ satisfy $\sigma_1=\sigma_4 = 6$, but the sum $S_1+S_2+S_3+S_4$ has valuation greater than $6$.
\end{example}

\subsection{Integrated contact trees}

Recall from Proposition \ref{prop:realtotplan} that the real total order $\order_{\Rr}$ on the set $\mathcal{R}_{\Rr}(f)$ of leaves of $T_{\Rr}(f)$ is planar relative to the abstract tree $T_{\Rr}(f)$. We will define now a second total order $\order_{\Int}$ on the set $\mathcal{R}_{\Rr}(f)$, whenever $f$ satisfies the injectivity condition.
By contrast with the real total order, the total order $\order_{\Int}$ will not be defined directly on the set of leaves, but it will be associated to a planar structure in the sense of Definition \ref{def:plastr}.

Let $\xi_i$, $\xi_j$ (with $i<j$) be two real roots of $f$. Let $P := \xi_i \wedge \xi_j$ and $s_{\iota(a)}$, $s_{\iota(a+1)}$, \ldots, $s_{\iota(b)}$ denote the initial coefficients 
associated with the outgoing edges at $P$, in between the edges going to the leaves $\xi_i$ and $\xi_j$.
Then one may define a binary relation $\order_{\Int, P}$ on the set of outgoing edges at $P$ by:
$$\xi_i \order_{\Int, P} \xi_j \iff s_{\iota(a)} + \cdots + s_{\iota(b)} > 0.$$
In terms of the discrete integration map of formula (\ref{eq:dim}), this equivalence may be reformulated as follows:
$$\xi_i \order_{\Int, P} \xi_j \iff {\smallint}_P(e_a) < {\smallint}_P(e_{b+1})$$
if $e_{a}$ is the outgoing edge going from $P$ to $\xi_i$ and $e_{b+1}$ is the outgoing edge going from $P$ to $\xi_j$ (see Figure \ref{fig:ea}). 

The fact that this binary relation is a strict total order results from the injectivity condition (\ref{Inj}). The set of these total orders, when $P$ varies among the internal vertices of $T_{\Rr}(f)$, defines a planar structure. Therefore, there is an induced total order $\order_{\Int}$ on the set $\mathcal{R}_{\Rr}(f)$ of leaves of $T_{\Rr}(f)$.

\begin{definition} 
\label{def:integratedtree}
Assume that $f$ satisfies the injectivity condition (\ref{Inj}). The collection of all total orders $\order_{\Int, P}$, when $P$ varies among the internal vertices of $T_{\Rr}(f)$, is the \defi{integrated planar structure on $T_{\Rr}(f)$}. We say that the abstract rooted tree $T_{\Rr}(f)$ endowed with this planar structure is the \defi{integrated contact tree} $T_{\Int}(f)$ of $f$. The associated total order $\order_{\Int}$ on the set $\mathcal{R}_{\Rr}(f)$ of leaves of $T_{\Rr}(f)$ is the \defi{integrated order}.
\end{definition}

The attribute ``integrated'' in the previous definition is motivated by the fact that the integrated orders are defined using the \emph{discrete integration maps} ${\smallint}_P$ of formula (\ref{eq:dim}).

\subsection{The first main theorem}

We get from Definition \ref{def:integratedtree} a pair $(\order_{\Rr},\order_{\Int})$
of total orders on the set $\mathcal{R}_{\Rr}(f)$ of real roots of $f$, obtained by identifying it with the set of leaves of $T_{\Rr}(f)$, seen as a planar tree in two ways. This allows us to formulate our first main theorem, describing the combinatorial types (in the sense of Definition \ref{def:combinmorsif}) of the primitives of right-reduced series which satisfy the injectivity condition:

\begin{theoremA}
\label{th:AA}
    Assume that the series $f\in\mathbb{R}\{x,y\}$ is right-reduced and satisfies the injectivity condition \eqref{Inj}. Consider a primitive $F_x(y) \in \Rr \{x, y\}$ of $f$. Then $F_x(y)$ is a morsification and its combinatorial type is represented by the bi-ordered set $(\mathcal{R}_{\Rr}(f),\order_{\Rr},\order_{\Int})$.
\end{theoremA}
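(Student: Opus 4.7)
The plan is to combine Proposition \ref{prop:morsif}, which reduces the morsification property to distinctness of the critical value series $F_x(\xi_i)$, with Lemma \ref{lem:diffcrit}, whose sign formula becomes unconditionally applicable once the injectivity condition~(\ref{Inj}) is imposed.

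First I would establish that $F_x(y)$ is a morsification. Label the real roots so that $\xi_1 \order_{\Rr} \cdots \order_{\Rr} \xi_n$ and fix $i < j$. The computation carried out in the proof of Lemma \ref{lem:diffcrit} yields
$$F_x(\xi_j) - F_x(\xi_i) \;=\; S_i + \cdots + S_{j-1} \;=\; \bigl(s_{\iota(a)} + \cdots + s_{\iota(b)}\bigr)\, x^{\sigma(P)} + \hot,$$
where $P := \xi_i \wedge \xi_j$ and $s_{\iota(a)}, \dots, s_{\iota(b)}$ are the initial coefficients associated to the outgoing edges at $P$ lying between those going to $\xi_i$ and to $\xi_j$. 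The injectivity condition~(\ref{Inj}) forces every such partial sum to be non-zero, so the critical value series $F_x(\xi_1), \dots, F_x(\xi_n)$ are pairwise distinct as elements of $\Rr\{x^{\frac{1}{\Nn}}\}$. Proposition \ref{prop:morsif} then provides a Morse rectangle $[0,\epsilon] \times I$ on which $F_{x_0}: I \to \Rr$ is Morse for every $x_0 \in (0,\epsilon]$, with a bi-ordered critical graph independent of $x_0$; this settles the first assertion of Theorem A.

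Next I would identify $\Crit(F_{x_0})$ with $\mathcal{R}_{\Rr}(f)$ via $\xi_i \mapsto (\xi_i(x_0), F_{x_0}(\xi_i(x_0)))$ and check that the canonical bi-order $(\order_s, \order_t)$ pulls back to $(\order_{\Rr}, \order_{\Int})$. The source order is immediate from Definition \ref{def:realtot}: $\xi_i \order_{\Rr} \xi_j$ iff $\xi_i(x_0) < \xi_j(x_0)$ for small $x_0 > 0$, which is exactly the order on the first coordinates of the corresponding critical points. For the target order, applying Lemma \ref{lem:diffcrit} pair by pair (its non-vanishing hypothesis being supplied by (\ref{Inj})) gives
$$\sgn\bigl(F_{x_0}(\xi_j(x_0)) - F_{x_0}(\xi_i(x_0))\bigr) = \sgn\bigl(s_{\iota(a)} + \cdots + s_{\iota(b)}\bigr)$$
for $x_0 \in (0, \epsilon_{ij}]$; taking $\epsilon$ to be the minimum of these values over the finitely many pairs $(i,j)$ makes the identity hold uniformly. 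By Definition \ref{def:integratedtree}, the right-hand side is positive exactly when $\xi_i \order_{\Int, P} \xi_j$, which, via the construction of leaf orders from planar structures explained in Subsection \ref{ssec:planarTrees}, is equivalent to $\xi_i \order_{\Int} \xi_j$. This matches $\order_t$ with $\order_{\Int}$ and completes the proof.

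I do not foresee a substantial obstacle: all analytic content is already packaged in Proposition \ref{prop:sigmai} and Lemma \ref{lem:diffcrit}, and what remains is a careful dictionary between signs of differences of critical values and the orders encoded by the integrated planar structure on $T_{\Rr}(f)$. The only delicate point is the uniform shrinkage of $\epsilon$ over the finite family of pairs, which is a routine adjustment.
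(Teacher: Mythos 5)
Your proposal is correct and follows essentially the same route as the paper's own proof: invoke Proposition \ref{prop:morsif} after observing that the injectivity condition makes the critical value series pairwise distinct, then transport the bi-order via $\xi_i \mapsto (\xi_i(x_0), F_{x_0}(\xi_i(x_0)))$ and match $\order_t$ with $\order_{\Int}$ through Lemma \ref{lem:diffcrit}. The remark about shrinking $\epsilon$ uniformly over pairs is harmless but redundant, since Proposition \ref{prop:morsif} already guarantees the bi-ordered critical graph is constant on the Morse rectangle.
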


\begin{proof}
By the injectivity condition, the series $F_x(\xi_i)$ are pairwise distinct when $\xi_i$ varies among the real Newton-Puiseux roots of $f$. Hence, by Proposition \ref{prop:morsif}, $F_x(y)$ is a morsification. 

Let $[0, \epsilon] \times I$ be a Morse rectangle for $F_x(y)$ (see Definition \ref{def:morsif}). 
We will prove that $(\mathcal{R}_{\Rr}(f),\order_{T},\order_{\Int})$ and $(\Crit(F_{x_0}),\order_s,\order_t)$ are isomorphic as bi-ordered sets, for every $x_0 \in (0, \epsilon]$.
Consider the bijection from $\mathcal{R}_{\Rr}(f)$ to $\Crit(F_{x_0})$,
sending a real Newton-Puiseux root $\xi_i$ of $f$ to the element $(\xi_i(x_0), F_{x_0}(\xi_i(x_0)))$ of the critical graph of $F_{x_0}$.
The orders $\order_{\Rr}$ and $\order_s$ correspond by the previous bijection:
recall that $(\xi_i(x_0), F_{x_0}(\xi_i(x_0))) \order_s (\xi_j(x_0), F_{x_0}(\xi_j(x_0)))$ iff 
$\xi_i(x_0) \order \xi_j(x_0)$, which is equivalent to $\xi_i \order_{\Rr} \xi_j$.

It remains to prove
that the orders $\order_t$ and $\order_{\Int}$ also correspond by the bijection. 
Let $i<j$ such that 
$(\xi_i(x_0), F_{x_0}(\xi_i(x_0))) \order_t (\xi_j(x_0), F_{x_0}(\xi_j(x_0)))$. 
This means that 
$F_{x_0}(\xi_i(x_0)) < F_{x_0}(\xi_j(x_0))$ whenever $x_0 \in (0, \epsilon]$. 
By Lemma \ref{lem:diffcrit}, this is equivalent to the inequality
$s_{i_a} + s_{i_{a+1}} + \ \cdots \ + s_{i_{b}} > 0$. 
By the definition of the integrated total order $\order_{\Int}$, we get indeed the inequality $\xi_i \order_{\Int} \xi_j$.
\end{proof}

\begin{example}
\label{ex:parabola1}
We consider again the example $f(x,y) = 3(y^2-x)$ of Subsection \ref{ssec:fundpict}. The Newton-Puiseux roots of $f$ are $\xi_1 = -x^{1/2}$ and $\xi_2 = x^{1/2}$. The right semi-branches corresponding to $\xi_1$ and $\xi_2$, one below, one above the $x$-axis, are depicted on the left of Figure \ref{fig:parabola} (see also Figure \ref{fig:fundamental}). The contact tree $T_{\Rr}(f) = T_{\Rr}(\xi_1,\xi_2)$ is depicted in the center. 

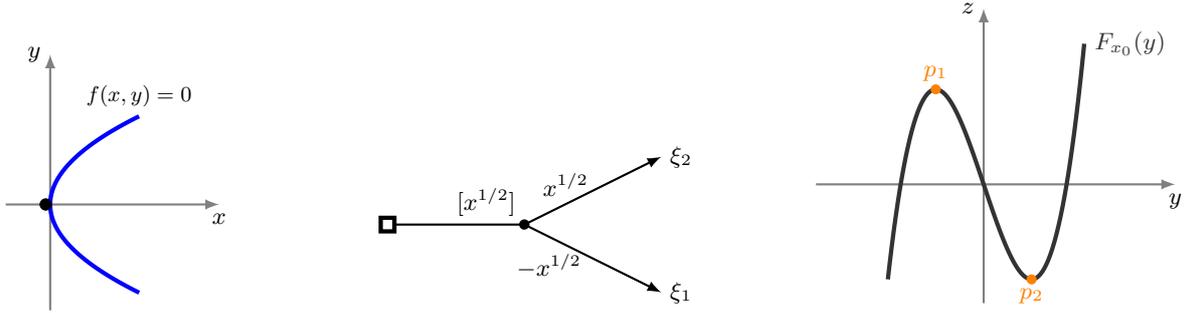
\begin{figure}[H]
 \myfigure{0.9}{
	\begin{tikzpicture}[scale=1.3]
\draw [->, >=latex, gray, thick](-0.5,0) -- (1.9,0) node[below,black] {$x$};;
\draw [->, >=latex, gray, thick] (0,-1.2)--(0,1.7) node[left,black] {$y$};

\draw[ultra thick, color=blue,domain=-1:1,samples=50,smooth] plot ({(\x)^2},{\x}) node[above,scale=0.9,black] {$f(x,y)=0$};


\node[draw,circle, inner sep=1.5pt,color=black, fill=black] at (-0.05,0){};
\end{tikzpicture}%

	\begin{tikzpicture}[scale=1]
\path
  (0,0) coordinate(O)
  ++(2,0) coordinate(A)
  ++(2,0) coordinate(P)
  (P)   
  +(2,-1) coordinate(Q1)
  +(2,1) coordinate(Q2)
  (Q1) + (2,0) coordinate (R1)
  (Q2) + (2,0) coordinate (R2)
;

\draw[thick]
  (A) -- (P);
  \draw[->,>=latex,thick]
  (P) -- (Q1);
  \draw[->,>=latex,thick]
  (P) -- (Q2)
;
\foreach \v in { A, P}{
  \path  (\v) node[scale=4]{.};
}
\filldraw[ultra thick,fill=white] (A) ++ (-0.1,-0.1) rectangle ++(0.2,0.2);
\path (P) node[above left]{$[x^{1/2}]$};
\path (Q1) node[right]{$\xi_1$};
\path (Q2) node[right]{$\xi_2$};
\path (P)--(Q1) node[below,pos=0.3]{$ -x^{1/2}\ \ \ \ $};
\path (P)--(Q2) node[above,pos=0.3]{$ x^{1/2}$};
\end{tikzpicture}%

	\begin{tikzpicture}[scale=0.7]
\draw[->,>=latex,thick, gray] (-3.5,0)--(4,0) node[below,black] {$y$};
\draw[->,>=latex,thick, gray] (0,-2.5)--(0,3.7) node[left,black] {$z$};

\def\c{1.0}
\draw[ultra thick, color=black!80,domain=-2:2.1,samples=100] plot (\x,{\x^3-3*\c*\x)}) node[right] {$F_{x_0}(y)$};

\def\racc{sqrt{\c}}
\coordinate (P1) at (-\racc, 2*\c*\racc );	
\coordinate (P2) at (\racc,-2*\c*\racc);	

\fill[orange] (P1) circle (3pt) node[above]{$p_1$};
\fill[orange] (P2) circle (3pt) node[below]{$p_2$};

\end{tikzpicture}%

 }
\caption{The Newton-Puiseux roots $\xi_1$, $\xi_2$ from Example \ref{ex:parabola1} (left), the contact tree $T_{\Rr}(f)$ (center) and the graph of the primitive $F_{x_0}(y)=y^3-3x_{0}y$, with $x_0 > 0$ (right).}
\label{fig:parabola}
\end{figure}
 
The chosen primitive is $F_x(y)=y^3-3xy$. Thus we obtain $S_1 = F_x(\xi_2) - F_x(\xi_1) =-4 x^{3/2}$, hence $s_1=-4$ and $\sigma(\xi_1\wedge \xi_2)=3/2$. 

Fix $x_0>0$ and denote by $p_1 = (\xi_1(x_0), F_{x_0}(\xi_1(x_0)))$ and $p_2 = (\xi_2(x_0), F_{x_0}(\xi_2(x_0)))$ the elements of the critical graph of $F_{x_0}$ (see the right of Figure \ref{fig:parabola}). We have $p_1 \order_s p_2$. As $S_1(x_0) <0$, then $F_{x_0}(\xi_2(x_0)) < F_{x_0}(\xi_1(x_0))$, therefore $p_2 \order_t p_1$.
On the other hand $\xi_1 \order_{\Rr} \xi_2$ and since $s_1<0$, we have $\xi_2 \order_{\Int} \xi_1$.
Conclusion: the bi-orders on the critical graph and on the set of real roots of $f$ are isomorphic.
\end{example}

\begin{remark}  
\label{rem:strongconstr} 
    As a consequence of Theorem A, the combinatorial type of the primitives $F_x(y)$ of a right-reduced series $f(x,y)$ which satisfies the injectivity condition is constrained by the structure of the real contact tree $T_{\Rr}(f)$ of $f$. For instance, if $T_{\Rr}(f)$ is isomorphic to the planar tree with three leaves $\ell_1, \ell_2, \ell_3$ from Example \ref{ex:planarRelativeToT}, then the combinatorial type of $F_x(y)$ cannot be 
    $(\ell_1 <_1 \ell_2 <_1 \ell_3, \ \ell_2 <_2 \ell_1 <_2 \ell_3)$. 
\end{remark}

\begin{remark}
\label{rem:notinj}
    Even if the right-reduced series does not satisfy the injectivity condition, Lemma \ref{lem:diffcrit} allows to get constraints on the combinatorial types of its primitives $F_x(y)$, when these primitives are morsifications. More precisely, for each pair of real roots $\xi_i \order_{\Rr} \xi_j$, the lemma gives the order relation of the critical values $F_{x_0}(\xi_j(x_0))$ and $F_{x_0}(\xi_i(x_0))$ for $x_0$ small enough, whenever the sum $s_{\iota(a)} + s_{\iota(a+1)} + \ \cdots \ + s_{\iota(b)}$ is non-zero. For instance, this sum is non-zero if in the planar tree $T_{\Rr}(f)$ there is no other outgoing edge at $\xi_i \wedge \xi_j$ in between the edges going to the leaves $\xi_i$ and $\xi_j$. Indeed, then the sum above contains only one term, which is by definition non-zero. 
\end{remark}

\section{The contact tree of the apparent contour in the target}
\label{sec:contour}

Assume again that the series $f\in\mathbb{R}\{x,y\}$ is right-reduced, satisfies the injectivity condition \eqref{Inj}, and that $F_x(y) \in \Rr\{x,y\}$ denotes a primitive of $f$.
In this section, we identify the real contact tree of the apparent contour in the target of the morphism $(x,y) \to (x, F_x(y))$ with the integrated contact tree of $f$ from Definition \ref{def:integratedtree} (see Theorem B).

\subsection{Real polar and discriminant curves}
 
By Theorem A, $F_x(y)$ is a morsification. Let $[0, \epsilon] \times I$ be a Morse rectangle for it. 
Denote as before by $\xi_1, \dots, \xi_n$ the real Newton-Puiseux roots of $f$. We will consider in full generality three geometric objects which appeared already in Figure \ref{fig:fundamental} of the introduction:
\begin{itemize}
 \item The graph of the function $(x,y) \mapsto F_x(y)$, that is, the surface: 
 $$\mathcal{G} := \big\{ (x,y, F(x,y)) \mid (x,y) \in [0, \epsilon] \times I \big\} \subset \Rr^3.$$
 
 \item The projection $\pi : \mathcal{G} \to \Rr^2_{x,z}$ given by $\pi(x,y,z) = (x,z)$.
 The critical image $\Delta \subset \Rr^2_{x,z}$ of $\pi$ is called the \defi{discriminant curve} or the \defi{apparent contour in the target} of $\pi$.
 The real Newton-Puiseux roots of $\Delta$ in the coordinate system $(x,z)$ are denoted by $\delta_1, \ldots, \delta_n \in \Rr\{x^{\frac{1}{\Nn}}\}$.

 \item The \defi{polar curve} $\Gamma \subset{\Rr^2_{x,y}}$ of $\pi$ is the projection of the critical locus of $\pi$ to the horizontal real plane $\Rr^2_{x,y}$. Then $\Gamma$ is defined by $f(x,y) =0$ and its real Newton-Puiseux roots are exactly $\xi_1,\ldots,\xi_n$.
\end{itemize}
By construction we have, for all $i=1,\ldots,n$:
\begin{equation*}
\label{eq:xidelta}
\delta_i = F_x\big(\xi_i \big).
\end{equation*}

\begin{remark}
\label{rem:absisom}
Note that both $\Gamma$ and $\Delta$ are semi-analytic germs. As a consequence of  Theorem B below, their real contact trees are isomorphic as abstract rooted trees whenever the injectivity condition is satisfied.
\end{remark}

In the real plane $\Rr^2_{x,y}$, the right semi-branches $\Gamma_{\xi_1},\ldots, \Gamma_{\xi_n}$ are ordered by the real total order $\order_{\Rr}$ of Definition \ref{def:realtot}.
Our goal is to determine the total order of their projections $\Gamma_{\delta_1},\ldots, \Gamma_{\delta_n}$ in the plane $\Rr^2_{x,z}$. This order is encoded in the contact tree $T_\Rr(\delta_1,\ldots,\delta_n)$ of the real Newton-Puiseux roots of the apparent contour in the target $\Delta$. Theorem B below describes the isomorphism type of this planar tree.

\subsection{The second main theorem}

Our second main theorem shows that the planar tree $T_{\Int}(f)$ from Definition \ref{def:integratedtree} is isomorphic to a real contact tree:
 
\begin{theoremB}
\label{th:BB}
Let $f \in \Rr\{x, y\}$ be a right-reduced series satisfying the injectivity condition \eqref{Inj}. The integrated contact tree $T_{\Int}(f)$ is isomorphic to the real contact tree $T_\Rr(\delta_1,\ldots,\delta_n)$ of the real roots $\delta_i = F_x(\xi_i)$ of the apparent contour in the target of the projection $\pi$.
\end{theoremB}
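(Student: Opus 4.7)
The plan is to establish an isomorphism of planar rooted trees $T_{\Int}(f) \to T_{\Rr}(\delta_1, \ldots, \delta_n)$ via the bijection of leaves $\xi_i \mapsto \delta_i$, by verifying successively that (a) the bijection is well-defined, (b) it extends to an isomorphism of abstract rooted trees, and (c) this isomorphism matches the planar structures.

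First I would extract from the proof of Lemma \ref{lem:diffcrit} the sharpened identity
$$F_x(\xi_j) - F_x(\xi_i) = \big( s_{\iota(a)} + s_{\iota(a+1)} + \cdots + s_{\iota(b)} \big) \, x^{\sigma(P)} + \hot,$$
where $P := \xi_i \wedge \xi_j$ and $s_{\iota(a)}, \ldots, s_{\iota(b)}$ are the initial coefficients of the area series associated with the outgoing edges at $P$ that lie between the edges going to $\xi_i$ and $\xi_j$. Under the injectivity condition \eqref{Inj}, the parenthesized sum is nonzero, so the series $\delta_i = F_x(\xi_i)$ are pairwise distinct and
$$\val(\delta_j - \delta_i) \; = \; \sigma(\xi_i \wedge \xi_j).$$
In particular, $T_{\Rr}(\delta_1, \ldots, \delta_n)$ is a well-defined real contact tree on $n$ leaves.

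Next I would upgrade this identity to an isomorphism of abstract rooted trees. A contact tree on finitely many labeled leaves is determined as a rooted tree by the rank ordering of its pairwise ultrametric distances: concretely, $\gamma_i \wedge \gamma_j \preceq \gamma_i \wedge \gamma_k$ in $T_{\Rr}(\gamma_1, \ldots, \gamma_n)$ if and only if $\val(\gamma_j - \gamma_i) \leq \val(\gamma_k - \gamma_i)$. For $i, j, k$ pairwise distinct, the meets $\xi_i \wedge \xi_j$ and $\xi_i \wedge \xi_k$ both lie on the segment $[O, \xi_i]$ of $T_{\Rr}(f)$, hence are comparable for $\preceq_{T_{\Rr}(f)}$. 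By Lemma \ref{lem:incareaexp}, the integrated exponent function $\sigma$ is strictly increasing on $T_{\Rr}(f)$, so
$$\xi_i \wedge \xi_j \preceq_{T_{\Rr}(f)} \xi_i \wedge \xi_k \;\Longleftrightarrow\; \sigma(\xi_i \wedge \xi_j) \leq \sigma(\xi_i \wedge \xi_k) \;\Longleftrightarrow\; \val(\delta_j - \delta_i) \leq \val(\delta_k - \delta_i).$$
This is exactly the defining ternary relation for $\delta_i \wedge \delta_j \preceq \delta_i \wedge \delta_k$ in $T_{\Rr}(\delta_1, \ldots, \delta_n)$. Hence $\xi_i \mapsto \delta_i$ extends uniquely to an isomorphism of abstract rooted trees sending $\xi_i \wedge \xi_j$ to $\delta_i \wedge \delta_j$.

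Finally I would check that the planar structures agree. By Proposition \ref{prop:compatord}, a planar structure on a rooted tree is determined by the associated total order on its leaves, so it suffices to verify $\xi_i \order_{\Int} \xi_j \Longleftrightarrow \delta_i \order_{\Rr} \delta_j$. The sharpened identity of the first step gives
$$\lc(\delta_j - \delta_i) \; = \; s_{\iota(a)} + s_{\iota(a+1)} + \cdots + s_{\iota(b)},$$
so $\delta_i \order_{\Rr} \delta_j$ iff this sum is positive, which is exactly the defining condition for $\xi_i \order_{\Int} \xi_j$ in Definition \ref{def:integratedtree}. The main obstacle is the combined content of steps (a) and (b): one must refine the conclusion of Lemma \ref{lem:diffcrit} from a sign statement to a valuation statement, and then observe that this single identity, together with the strict monotonicity of $\sigma$ on the poset $T_{\Rr}(f)$, is enough to recover the entire ultrametric structure of the target contact tree up to the monotone change of scale $E(\xi_i \wedge \xi_j) \mapsto \sigma(\xi_i \wedge \xi_j)$. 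Once this identification is secured, the planar-structure step is a straightforward bookkeeping of signs of initial coefficients.
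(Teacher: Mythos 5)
Your proposal is correct and follows essentially the same route as the paper: both rest on the key valuation identity $\val(\delta_j - \delta_i) = \sigma(\xi_i \wedge \xi_j)$ extracted from the computation inside Lemma \ref{lem:diffcrit}, combined with the strict monotonicity of $\sigma$ from Lemma \ref{lem:incareaexp}, and both then verify separately the abstract-tree isomorphism and the agreement of planar structures via Lemma \ref{lem:diffcrit}'s sign statement. The only cosmetic difference is in the abstract-tree step, where you invoke the ternary-relation characterization of contact trees while the paper glues segment homeomorphisms $[O,\xi_i] \to [O,\delta_i]$; these are two phrasings of the same argument.
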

 
\begin{proof}
Since $f$ is right-reduced, the Newton-Puiseux series $\xi_i$ are pairwise distinct.
The injectivity condition implies that the Newton-Puiseux series $\delta_i$ are also pairwise distinct. 

We will prove that there exists a unique homeomorphism from $T_{\Int}(f)$ to $T_\Rr(\delta_1,\ldots,\delta_n)$ which respects the labels in $\{1, \dots, n\}$ of their leaves and sends the integrated exponent function of $T_{\Int}(f)$ to the exponent function of $T_\Rr(\delta_1,\ldots,\delta_n)$. Its \emph{uniqueness} comes from the fact that the constraint of respecting the labels obliges to identify the segments $[O, \xi_i]$ and $[O, \delta_i]$ for every $i \in \{1, \dots, n\}$, and that there is only one such identification which transforms the integrated exponent function on $[O, \xi_i]$ into the exponent function on $[O, \delta_i]$. It is therefore enough to prove the \emph{existence} of such a homeomorphism.

As an abstract rooted tree, $T_{\Int}(f)$ coincides by construction with $T_{\Rr}(f) = T_\Rr(\xi_1,\ldots,\xi_n)$. We first prove that $T_\Rr(\xi_1,\ldots,\xi_n)$ and $T_\Rr(\delta_1,\ldots,\delta_n)$ are homeomorphic as abstract rooted trees, by a homeomorphism which respects the constraints above.
\begin{itemize}
 \item For every $i \in \{1, \dots, n\}$, there exists a unique homeomorphism from $[O, \xi_i] \hookrightarrow T_\Rr(\xi_1,\ldots,\xi_n)$ to $[O, \delta_i] \hookrightarrow T_\Rr(\delta_1,\ldots,\delta_n)$, which identifies the restrictions to those segments of the integrated  exponent function $\sigma$ on $T_\Rr(\xi_1,\ldots,\xi_n)$ and of the exponent function $E$ on $T_\Rr(\delta_1,\ldots,\delta_n)$. This is a consequence of Lemma \ref{lem:incareaexp} and of the fact that, by Definition \ref{def:intexp}, the restrictions $[O, \xi_i] \to [0, \infty]$ of the function $\sigma$ are increasing homeomorphisms.
 
 \item Let us consider distinct series $\xi_i$ and $\xi_j$, which are represented by two leaves of the tree $T_\Rr(\xi_1,\ldots,\xi_n)$.  
 Denote $P:= \xi_i \wedge \xi_j$. We have $E(P) = \val(\xi_j - \xi_i)$.
 The series $\delta_i := F_x(\xi_i)$ and $\delta_j := F_x(\xi_j)$ are two leaves of the tree 
 $T_\Rr(\delta_1,\ldots,\delta_n)$. Let $P' := \delta_i \wedge \delta_j$. In the tree $T_\Rr(\delta_1,\ldots,\delta_n)$, we have $E(P') = \val(\delta_j - \delta_i) = \sigma(P)$ (see Proposition \ref{prop:sigmai}). Therefore, the previous homeomorphisms glue into a homeomorphism from $T_\Rr(\xi_1,\ldots,\xi_n)$ to $T_\Rr(\delta_1,\ldots,\delta_n)$.
 
\end{itemize}

To prove that this homeomorphism identifies the planar structures of $T_{\Int}(f)$ and $T_\Rr(\delta_1,\ldots,\delta_n)$, we need to prove that the leaves are ordered in the same way:
$$\delta_i \order_{T_\Rr(\delta_1,\ldots,\delta_n)} \delta_j \iff \xi_i \order_{\Int} \xi_j.$$
Recall that, to decide if $\xi_i \order_{\Int} \xi_j$, we look at the ``local order'' at $P = \xi_i \wedge \xi_j$. Denote by $e_a$ the outgoing edge from $P$ to $\xi_i$ and by $e_{b+1}$ the outgoing edge from $P$ to $\xi_j$. Then $\xi_i \order_{\Int} \xi_j \iff e_a \order_{\Int} e_{b+1}$.
Now:
\begin{align*}
\delta_i \order_{T_\Rr(\delta_1,\ldots,\delta_n)} \delta_j 
 &\iff F_x(\xi_i ) \order_{\Rr} F_x(\xi_j ) \qquad \text{(ordered by the real order as series in $x$)} \\
 &\iff s_{i_a}+s_{i_{a+1}} + \cdots + s_{i_b} > 0 \qquad \text{(by Lemma \ref{lem:diffcrit})} \\
 &\iff e_a \order_{\Int} e_{b+1} \qquad \text{(by Definition \ref{def:integratedtree} of the integral order)} \\
 &\iff \xi_i \order_{\Int} \xi_j.
\end{align*}

\end{proof}

\begin{figure}[H]
\myfigure{0.9}{
	\begin{tikzpicture}[scale=1.2]
\draw [->, >=latex, gray, thick](-1,0) -- (2,0) node[below,black] {$x$};
\draw [->, >=latex, gray, thick] (0,-1.2)--(0,1.4) node[left,black] {$z$};

\draw[ultra thick, color=red!80,domain=0:1,samples=50,smooth] plot ({\x^2},{\x^3}) node[right] {$\Gamma_{\delta_1}$};
\draw[ultra thick, color=red!80,domain=0:1,samples=50,smooth] plot ({\x^2},{-\x^3}) node[right] {$\Gamma_{\delta_2}$};


\node[draw,circle, inner sep=1.5pt,color=black, fill=black] at (-0.05,0){};
\end{tikzpicture}%

	\begin{tikzpicture}[scale=0.8]
\path
  (0,0) coordinate(O)
  ++(2,0) coordinate(A)
  ++(2,0) coordinate(P)
  (P)   
  +(2,-1) coordinate(Q1)
  +(2,1) coordinate(Q2)
  (Q1) + (2,0) coordinate (R1)
  (Q2) + (2,0) coordinate (R2)
;

\draw[thick]
  (A) -- (P);
  \draw[->,>=latex,thick]
  (P) -- (Q1);
  \draw[->,>=latex,thick]
  (P) -- (Q2)
;
\foreach \v in { A, P}{
  \path  (\v) node[scale=4]{.};
}
\filldraw[ultra thick,fill=white] (A) ++ (-0.1,-0.1) rectangle ++(0.2,0.2);
\path (P) node[above left]{$[x^{3/2}]$};
\path (Q1) node[right]{$\delta_2$};
\path (Q2) node[right]{$\delta_1$};
\path (P)--(Q1) node[right, pos=0.25]{$\  -2x^{3/2}$};
\path (P)--(Q2) node[above,pos=0.3]{$\ 2x^{3/2}$};

\node at (4,-2) {$T_\Rr(\delta_1,\delta_2)$};

\end{tikzpicture}%

	\begin{tikzpicture}[scale=0.8]
\path
  (0,0) coordinate(O)
  ++(2,0) coordinate(A)
  ++(2,0) coordinate(P)
  (P)   
  +(2,-1) coordinate(Q1)
  +(2,1) coordinate(Q2)
  (Q1) + (2,0) coordinate (R1)
  (Q2) + (2,0) coordinate (R2)
;

\draw[thick]
  (A) -- (P);
  \draw[->,>=latex,thick]
  (P) -- (Q1);
  \draw[->,>=latex,thick]
  (P) -- (Q2)
;
\foreach \v in { A, P}{
  \path  (\v) node[scale=4]{.};
}
\filldraw[ultra thick,fill=white] (A) ++ (-0.1,-0.1) rectangle ++(0.2,0.2);
\path (P) node[above left]{$[x^{1/2}]$};
\path (Q1) node[right]{$\xi_2$};
\path (Q2) node[right]{$\xi_1$};

\node at (4,-2) {$T_{\Int}(f)$};
\end{tikzpicture}%

}
\caption{The right semibranches $\Gamma_{\delta_1}$ and $\Gamma_{\delta_2}$ from Example \ref{ex:parabola2} (left); the isomorphic planar trees $T_\Rr(\delta_1,\delta_2)$ (center) and $T_{\Int}(\xi_1,\xi_2)$ (right).}
\label{fig:OneCusp}
\end{figure}
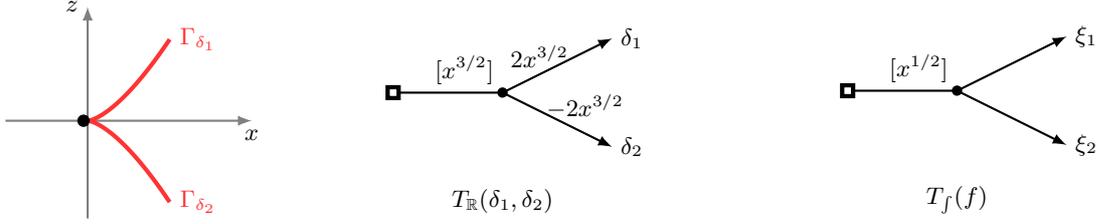

\begin{example}
\label{ex:parabola2}
Revisiting Example \ref{ex:parabola1}, we have $\delta_1 =F_x(\xi_1)=F_x(-x^{1/2})=2x^{3/2}.$ Similarly, $\delta_2 =-2x^{3/2}$. The real contact tree $T_{\Rr}(\delta_1,\delta_2)$ is depicted in Figure \ref{fig:OneCusp}. We have $\delta_2<_{T_\Rr(\delta_1,\delta_2)} \delta_1$. Recall from Example \ref{ex:parabola1} that $\xi_2 \order_{\Int} \xi_1$. Hence, the planar trees $T_\Rr(\delta_1,\delta_2)$ and $T_{\Int}(\xi_1,\xi_2)$ are isomorphic (we do not take into account the labels of the internal vertices).
\end{example}

\section{An example with three cusps}
\label{sec:example}

Let us consider the following $f\in\Rr\{x,y\}$: 
\begin{equation*}
 f(x,y):=(y^2-x^3)(y^2-c^2 x^3)(y^3-x^2),
\end{equation*}
where $c>1$ is a parameter. Its Newton-Puiseux roots are (here $\rho = e^{2\ii\pi/3}$): 
$$
\renewcommand{\arraystretch}{1.3}
\begin{array}{l}
\xi_1 =-c x^{3/2} \\
\xi_2=- x^{3/2} \\ 
\xi_3=x^{3/2} \\
\xi_4=c x^{3/2} \\
\xi_5=x^{2/3} \\
\end{array}
\qquad\qquad\qquad
\begin{array}{l}
\eta=\rho \ x^{2/3} \\
\bar{\eta}=\rho^2 \ x^{2/3} \\
\end{array}
$$
There are therefore $5$ real right semi-branches $\Gamma_{\xi_i}$ and $2$ non-real ones (which we may define similarly to the real ones, as the germs $\Gamma_{\eta}$ and $\Gamma_{\bar\eta}$ of the graphs of $x \mapsto \eta$ and $x \mapsto \bar{\eta}$, where $x \geq 0$), as represented in Figure \ref{fig:ThreeCusps}.

\begin{figure}[H]
\myfigure{0.8}{
	\begin{tikzpicture}[scale=6]
\draw [->, >=latex, gray, thick](-0.3,0) -- (0.55,0) node[below,black] {$x$};
\draw [->, >=latex, gray, thick] (0,-0.4)--(0,0.5) node[left,black] {$y$};

\def\k{1.5}

\draw[ultra thick, color=blue!50,domain=0:0.6,samples=50,smooth] plot ({\x^2},{-\k*\x^3}) node[right,black] {$\Gamma_{\xi_1}$};
\draw[ultra thick, color=blue!80,domain=0:0.6,samples=50,smooth] plot ({\x^2},{-\x^3}) node[right,black] {$\Gamma_{\xi_2}$};
\draw[ultra thick, color=blue!80,domain=0:0.6,samples=50,smooth] plot ({\x^2},{\x^3}) node[right,black] {$\Gamma_{\xi_3}$};
\draw[ultra thick, color=blue!50,domain=0:0.6,samples=50,smooth] plot ({\x^2},{\k*\x^3}) node[right,black] {$\Gamma_{\xi_4}$};

\draw[ultra thick, color=cyan,domain=0:0.6,samples=50,smooth] plot ({\x^3},{\x^2}) node[above,black] {$\Gamma_{\xi_5}$};

\draw[thick, dashed, color=brown,domain=0:0.6,samples=50,smooth] plot ({0.5*\x^3},{\x^2}) node[left,black] {$\Gamma_{\eta}$};
\draw[thick, dashed, color=brown,domain=0:0.6,samples=50,smooth] plot ({0.5*\x^3},{-\x^2}) node[left,black] {$\Gamma_{\bar\eta}$};

\end{tikzpicture}%

}
\caption{The $5$ real right semi-branches $\Gamma_{\xi_i}$ and the non-real semi-branches corresponding to the non-real Newton-Puiseux roots $\eta$ and $\bar \eta$ of $f$.}
 \label{fig:ThreeCusps}
\end{figure}
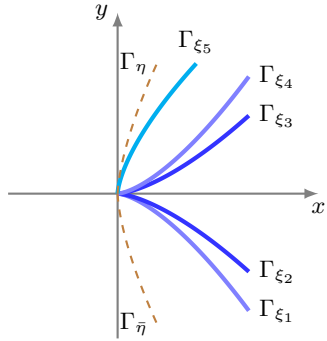

\begin{figure}[H]
 \myfigure{0.8}{%
	\begin{tikzpicture}[scale=1.2]
\path
  (0,0) coordinate(R)
  ++(2,0) coordinate(T)
  (T)
  +(2,-0.5) coordinate(V1)
  +(4,2) coordinate(V2)
  +(2,2.5) coordinate(V3)
  +(2,3.5) coordinate(V4)
  (V1)
  +(2,-2) coordinate(W1)  
  +(2,-1) coordinate(W2)    
  +(2,0.5) coordinate(W3)  
  +(2,1.5) coordinate(W4)  
;

\draw[thick]
  (R) -- (T)
  (T) -- (V1)
;
\draw[->,>=latex,thick] (T) -- (V2);
\draw[->,>=latex,thick,dotted] (T) -- (V3);
\draw[->,>=latex,thick,dotted] (T) -- (V4);
\draw[->,>=latex,thick] (V1) -- (W1);
\draw[->,>=latex,thick] (V1) -- (W2);
\draw[->,>=latex,thick] (V1) -- (W3);
\draw[->,>=latex,thick] (V1) -- (W4);

\filldraw[ultra thick,fill=white] (R) ++ (-0.1,-0.1) rectangle ++(0.2,0.2);

\path (T) node[above left]{$[x^{2/3}]$};
\path (V3) node[right]{$\eta$};
\path (V4) node[right]{$\bar{\eta}$};
\path (V1) node[below left]{$[x^{3/2}]$};


\path (W1) node[right]{$\xi_1$};
\path (W2) node[right]{$\xi_2$};
\path (W3) node[right]{$\xi_3$};
\path (W4) node[right]{$\xi_4$};
\path (V2) node[right]{$\xi_5$};

\path (T)--(V1) node[below,pos=0.4]{$0 x^{2/3}$};

\path (T)--(V2) node[ right,pos=0.4]{$x^{2/3}$};

\path (T)--(V3) node[ right,pos=0.6]{$j x^{2/3}$};

\path (T)--(V4) node[left,pos=0.6]{$j^2 x^{2/3}$};

\path (V1)--(W1) node[left,pos=0.6]{$ -c x^{3/2}$};
\path (V1)--(W2) node[above,pos=0.6]{$ - x^{3/2}$};
\path (V1)--(W3) node[above,pos=0.6]{$ x^{3/2}$};
\path (V1)--(W4) node[left,pos=0.6]{$ c x^{3/2}$};

\foreach \v in { T, V1}{
  \path  (\v) node[scale=4]{.};
}
\end{tikzpicture}%
}
 \caption{The real contact tree $T_{\Rr}(f)$ of the series $f$ is depicted with solid edges; the remaining edges of $T_{\Cc}(f)$ are dotted.}
 \label{fig:contactTreeInExample}
\end{figure}
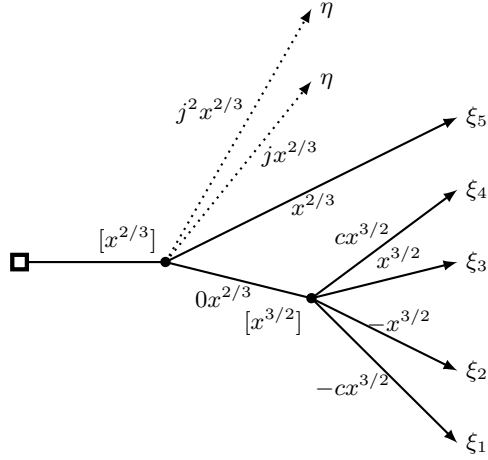

We will show that $f$ satisfies the injectivity condition \eqref{Inj} and we will apply Theorem A to compute the combinatorial type of the associated morsifications $F_{x_0}(y)$ (where $x_0 >0$ is small enough). We will see that the result depends on the parameter $c$.
The real contact tree $T_{\Rr}(f)$ is depicted with solid edges in Figure \ref{fig:contactTreeInExample}. The remaining edges of the complex contact tree $T_{\Cc}(f)$ are dotted. We have the following real total order on $\mathcal{R}_{\Rr}(f)$: 
   \[ \xi_1 \order_{\Rr} \xi_2 \order_{\Rr} \xi_3 \order_{\Rr} \xi_4 \order_{\Rr} \xi_5. \] 
We compute by termwise integration a primitive of $f$ in the sense of Equation (\ref{eq:PrimitiveOff}):
 $$ F_x(y) = \frac{1}{4} c^2 x^6 y^4 - c^2 x^8 y - \frac{1}{6} (c^2 + 1) x^3 y^6 + \frac{1}{3} (c^2 + 1) x^5 y^3 + \frac{1}{8} y^8 - \frac{1}{5} x^2 y^5. $$
This formula enables to compute the area series $S_i$ defined in Equation (\ref{eq:areaseries}), giving the following expressions:
$$
\renewcommand{\arraystretch}{1.3}
\begin{array}{l@{\hspace{7mm}}l@{\hspace{7mm}}l}
 S_1= ( \frac{2}{15}c^5 - \frac{2}{3} c^3 + \frac{2}{3} c^2 - \frac{2}{15}) x^{19/2} +\hot,
 \\
 S_2=(-\frac{4}{3} c^2 + \frac{4}{15})x^{19/2} +\hot,
 \\ 
 S_3= (\frac{2}{15} c^5 - \frac{2}{3} c^3 + \frac{2}{3}c^2- \frac{2}{15}) x^{19/2} +\hot,
 \\
 S_4=- \frac{3}{40}x^{16/3} +\hot.
 \\
\end{array}
$$

Thus $\sigma_1=\sigma_2=\sigma_3=19/2$ and $\sigma_4= 16/3$ (these exponents can also be retrieved via Proposition \ref{prop:sigmai}). We see that the initial coefficients $s_i$ are polynomials in the variable $c$. Now we may compute the series 
$\delta_i = F_x(\xi_i)$. We get:
$$
\renewcommand{\arraystretch}{1.3}
\begin{array}{l@{\hspace{7mm}}l@{\hspace{7mm}}l}
 \delta_1 = (\frac{1}{5}c^5- \frac{1}{3}(c^2 + 1)c^3+ c^3) x^{19/2} +\hot, 
 \\
 \delta_2=(c^2 - \frac{1}{3}(c^2 + 1) + \frac{1}{5}) x^{19/2} +\hot,
 \\ 
 \delta_3=(- c^2 + \frac{1}{3}(c^2 + 1) - \frac{1}{5}) x^{19/2} +\hot,
 \\
 \delta_4=(- \frac{1}{5} c^5 + \frac{1}{3}(c^2 + 1) c^3 - c^3)x^{19/2} +\hot,
 \\
 \delta_5=- \frac{3}{40}x^{16/3}+\hot.
 \\
\end{array}
$$

By our numbering, we have for small $x_0>0$: $\xi_1(x_0) < \xi_2(x_0) < \cdots < \xi_5(x_0)$. 
What is the order of the critical values $\delta_i(x_0)$?
Since $\sigma_4$ is strictly smaller than the other valuations, we have $\delta_5(x_0) < \delta_i(x_0)$, for $i=1,\ldots,4$.
Also, the relation $s_1(c)=s_3(c)$ imposes constraints on the order of the critical values. 
However, depending on $c>1$, several outcomes are still possible (see Figure \ref{fig:snakeThreeCusps}).
Let us denote, for $c>1$:
$$\lambda(c) := -\frac{s_1(c)}{s_2(c)}=\frac12\frac{(c-1)^3(c^2+3c+1)}{5c^2-1}.$$

\begin{itemize}
 \item \textbf{Case 1:} $0<\lambda(c)<\frac12$ (take for instance $c=2$). 
 In this situation $s_1+s_2 <0$, $s_2+s_3 <0$ and $s_1+s_2+s_3 <0$ so that
 for the critical values we obtain $\delta_3(x_0) < \delta_4(x_0) < \delta_1(x_0) < \delta_2(x_0)$ ($\delta_5(x_0)$ being smaller than all).
 In other words: $\xi_5 <_{\Int} \xi_3 <_{\Int} \xi_4 <_{\Int} \xi_1 <_{\Int} \xi_2$.
 The corresponding snake, i.e.{} the permutation associated to the bi-ordered critical set, is represented on the left of Figure \ref{fig:snakeThreeCusps}.
 
\begin{figure}[H]
\small
 \myfigure{0.6}{
	\begin{tikzpicture}[scale=1]

\draw[->,>=latex,very thick, gray] (-0.5,0)--(6,0) node[below,black] {$y$};
\draw[->,>=latex,very thick, gray] (0,-0.5)--(0,6) node[left,black] {$z= F_x(y)$};

\draw[very thick, gray] (0,0) grid ++(5,5);

\begin{scope}[xshift=-0.5cm,yshift=-0.5cm]
\coordinate (p1) at (1,4);   
\coordinate (p2) at (2,5);
\coordinate (p3) at (3,2);
\coordinate (p4) at (4,3);
\coordinate (p5) at (5,1);
\coordinate (p0) at (0.5,4.55);
\coordinate (p6) at (5.5,2);

\end{scope}

\draw[very thick, green!70!black]  
(p0) ..controls +(-90:0.25) and +(180:0.25)..  (p1) 
    ..controls +(0:0.5) and +(180:0.5).. (p2) 
     ..controls +(0:0.5) and +(180:0.5)..  (p3) 
       ..controls +(0:0.5) and +(180:0.5)..  (p4) 
       ..controls +(0:0.5) and +(180:0.5)..  (p5) 
      ..controls +(0:0.5) and +(90:0.25).. (p6)
;

\foreach \i in{1,...,5}{
 \fill[orange] (p\i) circle(3pt);
 \node[blue,below] at (\i-0.5,0) {$\i$};
 \node[green!70!black,left] at (0,\i-0.5) {$\i$};
}

\node at (2.5,-1) {\bf Case 1};

\end{tikzpicture}%
\qquad
	\begin{tikzpicture}[scale=1]
	
\draw[->,>=latex,very thick, gray] (-0.5,0)--(6,0) node[below,black] {$y$};
\draw[->,>=latex,very thick, gray] (0,-0.5)--(0,6) node[left,black] {$z$};
\draw[very thick, gray] (0,0) grid ++(5,5);

\begin{scope}[xshift=-0.5cm,yshift=-0.5cm]
  \coordinate (p1) at (1,3);   
  \coordinate (p2) at (2,5);
  \coordinate (p3) at (3,2);
  \coordinate (p4) at (4,4);
  \coordinate (p5) at (5,1);
  \coordinate (p0) at (0.5,3.75);
  \coordinate (p6) at (5.5,2);
\end{scope}

\draw[very thick, green!70!black]  
(p0) ..controls +(-90:0.25) and +(180:0.25)..  (p1) 
    ..controls +(0:0.5) and +(180:0.5).. (p2) 
     ..controls +(0:0.5) and +(180:0.5)..  (p3) 
       ..controls +(0:0.5) and +(180:0.5)..  (p4) 
       ..controls +(0:0.5) and +(180:0.5)..  (p5) 
      ..controls +(0:0.5) and +(90:0.25).. (p6)
;

\foreach \i in{1,...,5}{
 \fill[orange] (p\i) circle(3pt);
 \node[blue,below] at (\i-0.5,0) {$\i$};
 \node[green!70!black,left] at (0,\i-0.5) {$\i$};
}

\node at (2.5,-1) {\bf Case 2};
\end{tikzpicture}%
\qquad
	\begin{tikzpicture}[scale=1]
	
\draw[->,>=latex,very thick, gray] (-0.5,0)--(6,0) node[below,black] {$y$};
\draw[->,>=latex,very thick, gray] (0,-0.5)--(0,6) node[left,black] {$z$};
\draw[very thick, gray] (0,0) grid ++(5,5);

\begin{scope}[xshift=-0.5cm,yshift=-0.5cm]
  \coordinate (p1) at (1,2);   
  \coordinate (p2) at (2,4);
  \coordinate (p3) at (3,3);
  \coordinate (p4) at (4,5);
  \coordinate (p5) at (5,1);
  \coordinate (p0) at (0.5,2.75);
  \coordinate (p6) at (5.5,2);
\end{scope}

\draw[very thick, green!70!black]  
(p0) ..controls +(-90:0.25) and +(180:0.25)..  (p1) 
    ..controls +(0:0.5) and +(180:0.5).. (p2) 
     ..controls +(0:0.5) and +(180:0.5)..  (p3) 
       ..controls +(0:0.5) and +(180:0.5)..  (p4) 
       ..controls +(0:0.5) and +(180:0.5)..  (p5) 
      ..controls +(0:0.5) and +(90:0.25).. (p6)
;

\foreach \i in{1,...,5}{
 \fill[orange] (p\i) circle(3pt);
 \node[blue,below] at (\i-0.5,0) {$\i$};
 \node[green!70!black,left] at (0,\i-0.5) {$\i$};
}
\node at (2.5,-1) {\bf Case 3};
\end{tikzpicture}%

 } 
 \caption{Depending on the value of the parameter $c>1$, several configurations of the critical graph of $F_{x_0}(y)$ are possible, giving rise to distinct snakes.}
 \label{fig:snakeThreeCusps}
\end{figure}
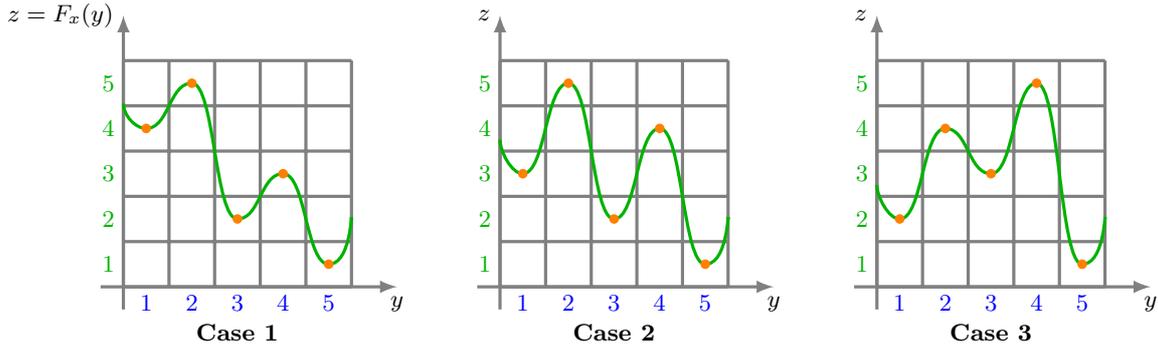

 \item \textbf{Case 2:} $\frac12 <\lambda(c) < 1$ (take for instance $c=\frac52$). 
 In this situation $s_1+s_2 <0$, $s_2+s_3 <0$ and $s_1+s_2+s_3 >0$ so that
 for the critical values we obtain $\delta_3(x_0) < \delta_1(x_0) < \delta_4(x_0) < \delta_2(x_0)$.
 The associated snake is $\pi_{F_{x_0}(y)} = \begin{pmatrix}
			{\color{blue} 1 } & {\color{blue} 2} & {\color{blue} 3} & {\color{blue} 4} & 
 {\color{blue} 5} \\
 {\color{green!70!black} 3 } & {\color{green!70!black} 5 } & {\color{green!70!black} 2 } & {\color{green!70!black} 4 } & {\color{green!70!black} 1 } \\
		\end{pmatrix}.$ Note that in this case, the permutation is non-separable (for the definition of separability, see \cite[page 13]{ghys_promenade}), whereas in \cite{sorea_portugaliae} only separable permutations were realized.

 \item \textbf{Case 3:} $\lambda(c) > 1$ (take for instance $c=3$). 
 In this situation $s_1+s_2 >0$, $s_2+s_3 >0$ and $s_1+s_2+s_3 >0$ so that
 for the critical values we obtain $\delta_1(x_0) < \delta_3(x_0) < \delta_2(x_0) < \delta_4(x_0)$.
\end{itemize}

\bibliographystyle{plain}
\bibliography{morse06.bib}

\begin{thebibliography}{10}

\bibitem{snakes}
Vladimir~Igorevich Arnold.
\newblock The calculus of snakes and the combinatorics of the {B}ernoulli,
  {E}uler and {S}pringer numbers of {C}oxeter groups.
\newblock {\em Russian Math. Surveys}, 47(1):1--50, 240, 1992.

\bibitem{CN}
Vladimir~Igorevich Arnold.
\newblock Nombres d'{E}uler, de {B}ernoulli et de {S}pringer pour les groupes
  de {C}oxeter et les espaces de morsification : le calcul des serpents.
\newblock {\em Le{\c{c}}ons de math{\'e}matiques d’aujourd’hui ({\'E}ric
  Carpentier and Nicolas Nikolski, eds.), Cassini, Paris}, pages 61--98, 2000.

\bibitem{BPS}
Arnaud Bodin, Patrick Popescu-Pampu, and Miruna-{\c S}tefana Sorea.
\newblock Poincar{\'e}-{R}eeb graphs of real algebraic domains.
\newblock {\em Revista Matemática Complutense}, 2023.

\bibitem{brieskorn}
Egbert Brieskorn and Horst Kn\"{o}rrer.
\newblock {\em Plane algebraic curves}.
\newblock Modern Birkh\"{a}user Classics. Birkh\"{a}user/Springer Basel AG,
  Basel, 1986.
\newblock Translated from the German original by John Stillwell, [2012] reprint
  of the 1986 edition.

\bibitem{ebeling}
Wolfgang Ebeling.
\newblock {\em Functions of several complex variables and their singularities},
  volume~83 of {\em Graduate Studies in Mathematics}.
\newblock American Mathematical Society, Providence, RI, 2007.
\newblock Translated from the 2001 German original by Philip G. Spain.

\bibitem{fischer}
Gerd Fischer.
\newblock {\em Plane algebraic curves}, volume~15 of {\em Student Mathematical
  Library}.
\newblock American Mathematical Society, Providence, RI, 2001.
\newblock Translated from the 1994 German original by Leslie Kay.

\bibitem{handbookCurves}
Evelia~Rosa García~Barroso, Pedro~Daniel Gonz\'{a}lez~P\'{e}rez, and Patrick
  Popescu-Pampu.
\newblock The combinatorics of plane curve singularities. {H}ow {N}ewton
  polygons blossom into lotuses.
\newblock In {\em Handbook of geometry and topology of singularities. {I}},
  pages 1--150. Springer, Cham, 2020.

\bibitem{ghysIntersecting}
\'{E}tienne Ghys.
\newblock Intersecting curves (variation on an observation of {M}axim
  {K}ontsevich).
\newblock {\em Amer. Math. Monthly}, 120(3):232--242, 2013.

\bibitem{ghys_promenade}
\'{E}tienne Ghys.
\newblock {\em A singular mathematical promenade}.
\newblock ENS \'{E}ditions, Lyon, 2017.

\bibitem{ghysSimon}
\'{E}tienne Ghys and Christopher-Lloyd Simon.
\newblock On the topology of a real analytic curve in the neighborhood of a
  singular point.
\newblock {\em Astérisque}, 415:1--33, 2020.

\bibitem{koike-parusinski}
Satoshi Koike and Adam Parusi\'{n}ski.
\newblock Blow-analytic equivalence of two variable real analytic function
  germs.
\newblock {\em J. Algebraic Geom.}, 19(3):439--472, 2010.

\bibitem{KuoLu}
Tzee~Char Kuo and Yung~Chen Lu.
\newblock On analytic function germs of two complex variables.
\newblock {\em Topology}, 16(4):299--310, 1977.

\bibitem{ultrametricsLipschitz}
Patrick Popescu-Pampu.
\newblock Ultrametrics and surface singularities.
\newblock In {\em Introduction to {L}ipschitz geometry of singularities},
  volume 2280 of {\em Lecture Notes in Math.}, pages 273--308. Springer, Cham,
  2020.

\bibitem{sorea_portugaliae}
Miruna-\c{S}tefana Sorea.
\newblock Constructing separable {A}rnold snakes of {M}orse polynomials.
\newblock {\em Port. Math.}, 77(2):219--260, 2020.

\bibitem{sorea_SymbComp}
Miruna-\c{S}tefana Sorea.
\newblock Measuring the local non-convexity of real algebraic curves.
\newblock {\em J. Symbolic Comput.}, 109:482--509, 2022.

\bibitem{sorea_fourier}
Miruna-\c{S}tefana Sorea.
\newblock Permutations encoding the local shape of level curves of real
  polynomials via generic projections.
\newblock {\em Ann. Inst. Fourier (Grenoble)}, 72(4):1661--1703, 2022.

\bibitem{sorea2018shapes}
Miruna-{\c S}tefana Sorea.
\newblock {\em The shapes of level curves of real polynomials near strict local
  minima}.
\newblock PhD thesis, Universit{\'e} de Lille/Laboratoire Paul Painlev{\'e},
  France, 2018.

\bibitem{teissier_clarif}
Bernard Teissier.
\newblock Some ideas in need of clarification in resolution of singularities
  and the geometry of discriminants.
\newblock In {\em Mathematics going forward---collected mathematical
  brushstrokes}, volume 2313 of {\em Lecture Notes in Math.}, pages 29--41.
  Springer, Cham, 2023.

\bibitem{vassiliev}
Victor~Anatolyevich Vassiliev.
\newblock Complements of discriminants of simple real function singularities,
  2022.
\newblock Preprint, \url{https://arxiv.org/abs/2109.12287}.

\bibitem{whitney}
Hassler Whitney.
\newblock On singularities of mappings of {E}uclidean spaces. {I}. {M}appings
  of the plane into the plane.
\newblock {\em Ann. of Math. (2)}, 62:374--410, 1955.

\end{thebibliography}

\end{document}